\definecolor{winered}{rgb}{0.55,0,0}
\definecolor{darkblue}{rgb}{0,0.3,0.7}
\def\opac{0.35}
\newtheorem{theorem}{Theorem}[section]
\newtheorem{introthm}{Theorem}
\newtheorem{conjecture}{Conjecture}
\newtheorem{introcor}[introthm]{Corollary}
\newtheorem{lemma}[theorem]{Lemma}
\newtheorem{proposition}[theorem]{Proposition}
\newtheorem{corollary}[theorem]{Corollary}
\theoremstyle{definition}
\newtheorem{definition}[theorem]{Definition}
\newtheorem{example}[theorem]{Example}
\newtheorem{remark}[theorem]{Remark}
\numberwithin{equation}{theorem}
\def\dist{{\rm dist}}
\def\diam{{\rm diam}}
\def\red{{\rm red}}
\def\Aut{{\rm Aut}}
\def\fin{{\rm fin}}
\def\Cl{{\rm Cl}}
\def\dif{{\rm d}}
\def\divisor{{\rm div}}
\def\del{\partial}
\def\delbar{\overline{\partial}}
\def\zbar{\overline{z}}
\def\xbar{\overline{x}}
\def\Link{{\rm Link}}
\def\sing{{\rm sing}}
\def\GL{{\rm GL}}
\def\id{{\rm id}}
\def\im{{\rm im}}
\def\loc{{\rm loc}}
\def\reg{{\rm reg}}
\def\sm{{\rm sm}}
\def\supp{{\rm supp}}
\def\codim{{\rm codim}}
\def\Jac{{\rm Jac}}
\def\PP{{\mathbb P}}
\def\CC{{\mathbb C}}
\def\ZZ{{\mathbb Z}}
\def\NN{{\mathbb N}}
\def\QQ{{\mathbb Q}}
\def\RR{{\mathbb R}}
\def\DD{{\mathbb D}}
\newcommand{\orbi}[1]{\mathcal{#1}}
\title[The local fundamental group of a klt singularity is finite]{The local fundamental group of a Kawamata log terminal singularity is finite}
\author{Lukas Braun}
\address{Mathematisches Institut, Albert-Ludwigs-Universit\"at Freiburg, Ernst-Zermelo-Strasse 1, 79104 Freiburg im Breisgau, Germany}
\email{lukas.braun@math.uni-freiburg.de}
\thanks{The author was partially supported by the DFG-Graduiertenkolleg GK1821 "Cohomological
Methods in Geometry" at the University of Freiburg. 
}
\subjclass[2010]{14F35, 14B05, 14J45, 32S50}
\keywords{klt singularity, weakly Fano pair, fundamental group}
\begin{document}

\begin{abstract}
We prove a conjecture of Koll\'ar stating that the local fundamental group of a klt singularity $x$ is finite. In fact, we prove a stronger statement, namely that the fundamental group of the smooth locus of a neighbourhood of $x$ is finite. We call this the \emph{regional fundamental group}. As the proof goes via a local-to-global induction, we simultaneously confirm finiteness of the orbifold fundamental group of the smooth locus of a weakly Fano pair. 
\end{abstract}

\maketitle

\setcounter{tocdepth}{1}

\section*{Introduction}

We work over the field $\CC$ of complex numbers. 
A log pair is an algebraic variety $X$ together with a boundary divisor $0 \leq \Delta < 1$ of the form $\Delta =\Delta'+\Delta''$, with $0 \leq \Delta''$ and $\Delta'=\sum(1-1/m_i)\Delta_i$  is a sum of prime divisors $\Delta_i$, whose coefficients satisfy $m_i \in \ZZ_{>1}$.

A \emph{Kawamata log terminal} or \emph{klt singularity} is a point $x \in (X,\Delta)$, such that for a log resolution $f:Y \to X$, locally around $x$, the discrepancies $a_i$, namely the coefficients of the exceptional divisors $E_i$ in the formula
$$
K_Y + f^{-1}_*\Delta \sim_\QQ f^*(K_X+\Delta)+\sum a_i E_i
$$ 
satisfy $a_i > -1$. We call a log pair $(X,\Delta)$ \emph{weakly Fano}, if it has only klt singularities and $-(K_X+\Delta)$ is big and nef.  The \emph{local fundamental group} of a normal singularity $x \in X$ is 
$$
\pi_1^\loc(X,x):=\pi_1(B \setminus x) =\pi_1(\Link(x)),
$$
where $B$ is the intersection of $X$ with a small euclidean ball around $x$ and the \emph{link} $\Link(x)$ is the boundary $\del B$. It is a deformation retract of $B \setminus x$ and so $\pi_1^\loc$ is well defined.  The following conjecture is due to Koll\'ar~\cite{KollarEx, KollarSing}.

\begin{conjecture}
\label{con:local}
Let $x \in (X,\Delta)$ be a klt singularity. Then the local fundamental group $\pi_1^\loc(X,x)$ is finite.
\end{conjecture}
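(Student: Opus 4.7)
The plan is to prove Conjecture~\ref{con:local} by a simultaneous induction on dimension that couples the local statement with a global statement about Fano pairs, as announced in the abstract. Concretely, I would prove by induction on $n$ that (a) for every klt singularity $x \in (X,\Delta)$ of dimension $n$ the regional (hence local) fundamental group is finite, and (b) for every weakly Fano pair $(X,\Delta)$ of dimension $n-1$ the orbifold fundamental group of its smooth locus is finite. The two assertions must be proved in parallel because the MMP constructions that bridge them feed each dimension into the next.

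The local-to-global step would use a plt blow-up $\pi\colon Y \to X$ extracting a single prime divisor $E$ over $x$. By inversion of adjunction, the pair $(E,\Delta_E)$ with $\Delta_E$ the different is a weakly Fano pair of dimension $n-1$. An exact sequence of fundamental groups relates $\pi_1^\loc(X,x)$ to the orbifold fundamental group of $(E,\Delta_E)$ and to $\pi_1^\loc$ of the finitely many lower-dimensional klt singularities appearing on $Y$, which are finite by the induction hypothesis~(a). Conversely, from (b) in a fixed dimension one would recover (a) in one dimension higher, because orbifold \'etale covers of a neighbourhood of $E$ in $Y$ restrict to \'etale covers of the smooth locus of $E$ and descend to covers of the germ at $x$.

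For the global assertion (b) I would invoke Birkar's boundedness theorem for $\epsilon$-lc weak Fano varieties together with his theorem on boundedness of complements. A finite quasi-\'etale cover $\widetilde X \to X$ of a weakly Fano pair, obtained by completing a finite \'etale cover of the smooth locus by normalisation, is again weakly Fano with $K_{\widetilde X}+\widetilde\Delta$ equal to the pullback of $K_X+\Delta$. Birkar's complements let one arrange the DCC coefficient condition so that all such covers lie in a single bounded family, and boundedness then bounds the index of every finite quotient of the orbifold fundamental group. A Jordan-type or inverse-limit argument would conclude that the orbifold fundamental group itself must be finite.

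The hardest point will be ensuring that the MMP step producing $(E,\Delta_E)$, and the passage to Galois covers in the global step, keep the resulting weakly Fano pairs within a single bounded family, so that BAB applies uniformly. The delicate technical input is the combination of Birkar's complements, to impose the DCC coefficients required by BAB, with the inductive finiteness of lower-dimensional local fundamental groups: these must be woven together without circularity, and the precise control of the coefficients of the different $\Delta_E$ under both adjunction and finite covers is where the core difficulty of the argument will sit.
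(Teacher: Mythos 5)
Your induction skeleton matches the paper in outline: you couple a local statement about klt singularities in dimension $n$ with a global statement about weakly Fano pairs in dimension $n-1$, and you use a plt blow-up extracting the Koll\'ar component $E$, with $(E,\Delta_E)$ weakly Fano by adjunction, as the bridge from the global statement down to the local one. (You label this the ``local-to-global step,'' but it is actually the global-to-local direction; the paper makes the opposite passage as well, which your outline leaves implicit.) The paper's Part~\ref{part:globtoloc} also works with the Koll\'ar component, but observes that the exact-sequence argument you sketch has to be replaced by a careful Whitney-stratification and Seifert--van Kampen analysis, because the na\"ive exact sequence fails for the \emph{regional} fundamental group: the relevant tubular-neighbourhood maps need not be surjective, and the normal closure of a finite subgroup in a free product can be infinite. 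So already the global-to-local step requires more care than an exact sequence provides, though this is a repairable issue.

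The genuine gap is in your treatment of the global assertion~(b). You propose to prove it directly and unconditionally via Birkar's boundedness of $\epsilon$-lc weak Fanos plus boundedness of complements, arguing that a bounded family of quasi-\'etale covers forces a uniform bound on the index of every finite quotient of $\pi_1(X_\sm,\Delta')$, and that a ``Jordan-type or inverse-limit argument'' then yields finiteness of the group itself. This last step does not work. Bounding all finite quotients only controls the profinite completion $\hat\pi_1(X_\sm,\Delta')$, which is exactly the statement Xu already proved; the inverse limit of the finite quotients \emph{is} the profinite completion, not the group. As the paper points out explicitly, a finitely presented infinite group can have trivial profinite completion---Thompson's group $T$ is simple, infinite, and finitely presented---so having all finite quotients trivial (or bounded) says nothing about the order of the group itself. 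This is precisely the obstruction that forces the paper to abandon purely algebraic boundedness arguments for the global step. Instead, the paper reduces $\pi_1(Y_\sm,D')$ to the orbifold fundamental group of a smooth compact geometric orbifold supported on a log resolution---this is where the inductive hypothesis~(a) is used, to kill small loops around exceptional divisors up to finite order---and then runs an analytic argument: Claudon's orbifold $\Gamma$-reduction (Shafarevich map), $L^2$-estimates against a singular Hermitian metric to produce a nontrivial $L^2$-section on the orbifold universal cover, and Gromov's Poincar\'e-series dichotomy, which forces too many sections of an $f$-exceptional divisor if the fundamental group were infinite. Without such an analytic input (or some other device that sees the full topological, not merely profinite, fundamental group), the boundedness route cannot close.
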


In the case of a weakly Fano pair $(X,\Delta)$, one can consider the smooth locus of $X$ and state the following global conjecture~\cite{Zhang, AIM}.
\begin{conjecture}
\label{con:global}
Let $(X,\Delta)$ be a weakly Fano pair. Then the fundamental group $\pi_1(X_\sm)$ of the smooth locus is finite.
\end{conjecture}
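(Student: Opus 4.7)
The plan is to prove Conjectures \ref{con:local} and \ref{con:global} by simultaneous induction on $n = \dim X$. Assuming both hold in all dimensions strictly less than $n$, I would first deduce Conjecture \ref{con:local} in dimension $n$ from Conjecture \ref{con:global} in dimension $n-1$, and then use the local statement together with Birkar's boundedness of Fano pairs (BAB) to deduce Conjecture \ref{con:global} in dimension $n$.

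\textbf{Local step.} Given a klt singularity $x \in (X,\Delta)$, extract a Koll\'ar component, i.e.\ a plt blow-up $\sigma \colon Y \to X$ whose exceptional locus is a single prime divisor $E$ such that $(E, \Delta_E)$ is a weakly Fano pair of dimension $n-1$, where $\Delta_E$ is the different. A standard argument, using that the smooth locus of a small punctured neighbourhood of $x$ deformation retracts onto the smooth orbifold locus of $E$, yields a surjection $\pi_1^\orb(E, \Delta_E) \twoheadrightarrow \pi_1^\loc(X, x)$. The inductive hypothesis of Conjecture \ref{con:global} in dimension $n-1$ gives finiteness of the source, hence of the target.

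\textbf{Global step.} Suppose for contradiction that $\pi_1(X_\sm)$ is infinite. Then there exist finite \'etale covers $U \to X_\sm$ of arbitrarily large degree $d$, which by Zariski--Nagata purity extend uniquely to finite Galois covers $\pi \colon \tilde X \to X$ \'etale over $X_\sm$. Define $\tilde\Delta$ on $\tilde X$ by the ramification formula $K_{\tilde X} + \tilde\Delta = \pi^*(K_X + \Delta)$. Then $(\tilde X, \tilde\Delta)$ is klt by Koll\'ar's results on covers of klt pairs, and $-(K_{\tilde X} + \tilde\Delta)$ is big and nef as a pullback, so $(\tilde X, \tilde\Delta)$ is again a weakly Fano pair. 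The crucial technical point is that the coefficients of $\tilde\Delta$ lie in a fixed DCC set depending only on $(X, \Delta)$; granted this, BAB places the family $\{(\tilde X, \tilde\Delta)\}$ in a bounded family, forcing a uniform upper bound on the anticanonical volumes. Since $\mathrm{vol}(-(K_{\tilde X} + \tilde\Delta)) = d \cdot \mathrm{vol}(-(K_X + \Delta))$, unbounded $d$ contradicts boundedness.

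The principal obstacle I anticipate is the DCC property for the coefficients of $\tilde\Delta$. It requires a careful analysis of how ramification indices of $\pi$ over the components of $\Delta$ interact with the prescribed multiplicities $m_i$ in the boundary, and it is here that the local finiteness statement in dimension $n$ has to intervene, controlling the inertia of $\pi$ around each singular stratum so that the possible ramification profiles over a component are drawn from a finite list. With this DCC property secured, the final contradiction via BAB and volume multiplicativity is essentially immediate, and the local step follows cleanly from the Koll\'ar-component technology.
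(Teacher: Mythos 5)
Your global step is fundamentally broken, and the break is precisely the reason this paper has nontrivial content beyond Xu's étale result. You write: ``Suppose for contradiction that $\pi_1(X_\sm)$ is infinite. Then there exist finite étale covers $U \to X_\sm$ of arbitrarily large degree $d$.'' This implication is false. An infinite group need not have finite-index subgroups of large index; indeed it may have \emph{no} proper finite-index subgroups at all, i.e.\ trivial profinite completion. The paper makes exactly this point in the introduction, with Thompson's group $T$ (infinite, simple, finitely presented) as a concrete example. Your BAB-plus-volume argument is, essentially, Xu's proof of finiteness of $\hat{\pi}_1(X_\sm)$, and it can never see the difference between $\pi_1$ and its profinite completion. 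To bound the actual fundamental group the paper has to do something genuinely different: it transfers Takayama's $\Gamma$-reduction (Shafarevich map) argument to the orbifold setting, produces a nontrivial $L^2$-section of $K_{\widetilde{\orbi{X}}} \otimes \pi^*L$ on the orbifold universal cover of a compact orbifold supported on a log resolution, and then invokes Gromov's theorem that an infinite $\pi_1$ would force too many Poincar\'e-series sections of the corresponding bundle downstairs, contradicting that the relevant divisor is $f$-exceptional. No boundedness theorem intervenes.

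Your local step also has a gap, and it is one the paper devotes an entire section to. The Koll\'ar-component surjection $\pi_1^\orb(E,\Delta_E) \twoheadrightarrow \pi_1^\loc(X,x)$ gives you finiteness of the \emph{local} fundamental group $\pi_1^\loc$, but the local-to-global direction of the induction (your deduction of Conjecture~\ref{con:global} from lower-dimensional local statements) requires the \emph{regional} fundamental group $\pi_1^\reg(X,x) = \pi_1(B \setminus X_\sing)$, since the loops around exceptional divisors of a log resolution land in the smooth locus of a neighbourhood of a (generally non-isolated) klt point, not in a punctured ball. Tian and Xu's Lemma~3.4 claimed one could pass from local to regional by Seifert--van Kampen over a Whitney stratification, but the paper identifies a gap there (the relevant map $h_1$ need not be surjective, so the normal closure of $h_1(\ker h_2)$ can be infinite). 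The fix is to modify Tian--Xu's Lemma~3.1 directly to handle the regional group, which is what the paper does in its Section on the modification. Your phrase ``deformation retracts onto the smooth orbifold locus of $E$'' glosses over exactly the stratified-space bookkeeping where the gap lives.
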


This conjecture has been proven for log del Pezzo surfaces~\cite{GurZhang1, GurZhang2, FujKobLu} and log Fano varieties of high Fano index~\cite{Zhang}.  

We prove generalized versions of both conjectures in the present paper. Firstly, for a log pair $(X,\Delta)$ with decomposition $\Delta=\Delta'+\Delta''$ as above, we can consider the \emph{complex orbifold} $\orbi{X}=(X,\Delta')$, see Section~\ref{sec:orbifold}. Then one can consider the \emph{orbifold fundamental group} of the smooth locus, denoted by $\pi_1(X_\sm,\Delta')$. This group is defined to be $\pi_1(X_\sm \setminus \supp (\Delta'))/N$, where $N$ is the normal subgroup generated by the $\gamma_i^{m_i}$, where $\gamma_i$ is a small loop around $\Delta_i$. 
Similarly to the global case, in the local setting we can consider the fundamental group  $\pi_1(B_\sm)=\pi_1(B \setminus X_\sing)$ of the smooth locus of $B$, instead of the local fundamental group. We call this group the \emph{regional fundamental group} and denote it by $\pi_1^\reg(X,x)$. It is also possible to define the orbifold fundamental group of $(B_\sm,\left. \Delta' \right|_{B_\sm})$, which we denote by $\pi_1^\reg(X,\Delta',x)$. Our two main theorems are the following.

\begin{introthm}
\label{thm:regional}
Let $x \in (X,\Delta'+\Delta'')$ be a klt singularity. Then the regional fundamental groups $\pi_1^\reg(X,x)$ and $\pi_1^\reg(X,\Delta',x)$ are finite.
\end{introthm}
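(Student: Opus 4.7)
The plan is to prove Theorem~\ref{thm:regional} by induction on $n=\dim X$, interlocked with the following dual statement in one dimension lower: if $(V,\Gamma)$ is a weakly Fano pair of dimension $n-1$, then the orbifold fundamental group $\pi_1(V_\sm,\Gamma')$ is finite. The two inductions feed each other. A regional group at a klt singularity is controlled by a weakly Fano pair of one lower dimension obtained from a plt extraction at $x$; conversely a weakly Fano pair $(V,\Gamma)$ admits an orbifold cone which is a klt singularity of one higher dimension whose regional fundamental group surjects onto $\pi_1(V_\sm,\Gamma')$. The base case $n=2$ is covered by the surface results cited in the introduction, so it suffices to run the inductive step.

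For the inductive step I would invoke Kollár-type existence of a \emph{plt blowup}: after a preliminary modification of $(X,\Delta)$ there is a proper birational morphism $\pi:Y\to X$ extracting a single prime divisor $E$ over $x$ such that $(Y,E+\pi^{-1}_*\Delta)$ is plt in a neighborhood of $E$ and $-(K_Y+E+\pi^{-1}_*\Delta)$ is $\pi$-ample. Adjunction then produces a weakly Fano pair $(E,\mathrm{Diff}_E)$ of dimension $n-1$, and by the global inductive hypothesis $\pi_1(E_\sm,\mathrm{Diff}_E')$ is finite. Shrinking $X$ to a small analytic ball $B$ around $x$, the preimage $\pi^{-1}(B)$ deformation retracts onto a regular neighborhood of $E$; the preimage of $B\setminus\{x\}$ is this neighborhood with $E$ removed, reinstating the meridian loop $\gamma_E$ around $E$, which has finite order equal to the plt index of $E$.

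A Van Kampen style argument then presents $\pi_1^\reg(X,x)$ as a quotient of a central extension of $\pi_1(E_\sm,\mathrm{Diff}_E')$ by the finite cyclic group generated by $\gamma_E$, glued along the regional fundamental groups of $Y$ at the finitely many points where $(Y,E+\pi^{-1}_*\Delta)$ fails to be log smooth. Each such contribution must be reduced back to already-known finite pieces, either by iterating the plt blowup locally at each bad point and re-invoking the same inductive package, or by a direct analysis of the plt structure along $E$. The orbifold refinement $\pi_1^\reg(X,\Delta',x)$ is handled by carrying $\Delta'$ through every step. I expect the main obstacle to be exactly this gluing step: one has to verify that the meridian, the orbifold base $(E,\mathrm{Diff}_E)$, and the contributions of the worse singular points of $Y$ fit into an extension whose finiteness propagates from $Y$ back to $X$, and that the simultaneous induction closes cleanly through the cone construction in the opposite direction.
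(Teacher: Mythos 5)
Your overall architecture---an interlocked induction between finiteness of regional fundamental groups of klt singularities and finiteness of orbifold fundamental groups of smooth loci of weakly Fano pairs---matches the paper's. Your global-to-local step (plt extraction of a Koll\'ar component, adjunction to a weakly Fano pair on $E$, Seifert--van Kampen over a neighbourhood of $E$) is also the correct skeleton, although you are underestimating it: the locus where $(Y,E+\pi^{-1}_*\Delta)$ fails to be log smooth is in general positive-dimensional, not finite, and the gluing must be organised via a full Whitney stratification and its control data; precisely this step had a gap in the original argument of Tian and Xu that the paper has to repair, so ``directly reduced back to already-known finite pieces'' hides a real piece of work. These are repairable issues.

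The fatal problem is the local-to-global step. The cone construction gives you a klt singularity of dimension $n+1$ above a weakly Fano pair of dimension $n$, so it proves the implication
``regional finiteness in dimension $n+1$ $\Rightarrow$ Fano finiteness in dimension $n$.''
Your plt extraction gives
``Fano finiteness in dimension $n-1$ $\Rightarrow$ regional finiteness in dimension $n$.''
Composing the two merely shows the statements in consecutive dimensions are logically linked; nowhere do you \emph{lower} the dimension, so the induction never closes. What is actually needed---and what the paper spends its entire Part~\ref{part:loctoglob} proving as Theorem~\ref{thm:loctoglob}---is the \emph{same-dimensional} implication
``regional finiteness in dimension $n$ $\Rightarrow$ Fano finiteness in dimension $n$,''
which cannot be extracted from the cone construction. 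The paper's proof of this step passes through a log resolution of $(Y,D)$, uses the induction hypothesis to kill finite powers of the meridians of the exceptional divisors (so that $\pi_1(Y_\sm,D')$ becomes the orbifold fundamental group of a \emph{compact} smooth orbifold supported on the resolution), and then runs an orbifold version of Takayama's argument: Claudon's orbifold $\Gamma$-reduction (Shafarevich map), construction of a nontrivial $L^2$ pluricanonical section on the orbifold universal cover via singular Hermitian metrics and Nadel-type $\bar\partial$-estimates, and finally Gromov's Poincar\'e series argument to contradict an infinite orbifold fundamental group. None of this analytic machinery appears in your proposal, and without it (or a genuinely different replacement) the simultaneous induction cannot run.
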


\begin{introthm}
\label{thm:global}
Let $(X,\Delta'+\Delta'')$ be a weakly Fano pair. Then the orbifold fundamental group $\pi_1(X_\sm,\Delta')$ of the smooth locus is finite.
\end{introthm}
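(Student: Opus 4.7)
The plan is to derive Theorem~\ref{thm:global} from Theorem~\ref{thm:regional} via an orbifold cone construction. Since $-(K_X+\Delta)$ is big and nef, one can write $-(K_X+\Delta)\sim_\QQ H+E$ with $H$ ample and $E$ effective; choosing $E$ general enough to lie in $X_\sm\setminus\supp(\Delta')$ and replacing $\Delta''$ by $\Delta''+\varepsilon E$ for a small $\varepsilon>0$ preserves the klt property while leaving both $X_\sm$ and $\Delta'$ unchanged, and hence leaves $\pi_1(X_\sm,\Delta')$ unchanged. After such a perturbation, $-(K_X+\Delta)$ becomes ample, so one may assume this from the start.

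For a sufficiently divisible $a\in\ZZ_{>0}$, set $L:=-a(K_X+\Delta)$ and form the affine orbifold cone
$$
C \;:=\; \mathrm{Spec}\,\bigoplus_{n\geq 0} H^0(X,nL)
$$
with apex $o$. By the standard cone computation, ampleness of $-(K_X+\Delta)$ combined with the klt property of $(X,\Delta)$ implies that the induced pair $(C,\Delta_C)$ is klt at $o$, where $\Delta_C=\Delta'_C+\Delta''_C$ is the pullback of $\Delta$ under the canonical projection $C\setminus\{o\}\to X$.

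A small Euclidean neighbourhood $B$ of $o$ in $C$ deformation retracts onto the link $\Link(o)$, which carries a locally free $S^1$-action with quotient orbifold $(X,\Delta')$. Restricted to the preimage of $X_\sm\setminus\supp(\Delta')$ this is a principal $S^1$-bundle, and the long exact sequence for the resulting Seifert orbifibration yields
$$
\pi_1(S^1) \;\longrightarrow\; \pi_1^\reg(C,\Delta'_C,o) \;\longrightarrow\; \pi_1(X_\sm,\Delta') \;\longrightarrow\; 1.
$$
Theorem~\ref{thm:regional} applied to the klt singularity $o\in(C,\Delta_C)$ shows the middle group is finite, whence its quotient $\pi_1(X_\sm,\Delta')$ is finite as well.

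The most delicate point is the first step: one must verify that enlarging $\Delta''$ by a general ample perturbation genuinely does not create new loops, which requires $E$ to meet neither the singular locus nor $\supp(\Delta')$ and forces the orbifold fundamental group to be a bona fide invariant of the weakly Fano pair up to such modifications. A secondary subtlety lies in the orbifold cone construction with fractional boundary: one has to check that $(C,\Delta_C)$ is klt with the correctly normalized orbifold boundary $\Delta'_C$, and that the identification of the link as a Seifert orbifibration is compatible with $\Delta'_C$ so that Theorem~\ref{thm:regional} applies in its full orbifold version.
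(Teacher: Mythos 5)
The central difficulty is not in the geometry but in the logic: your argument is circular within this paper's framework. You derive Theorem~\ref{thm:global} from Theorem~\ref{thm:regional}, treating the latter as independently established. But the paper proves Theorems~\ref{thm:regional} and~\ref{thm:global} \emph{simultaneously} by a local-to-global induction; the proof of Theorem~\ref{thm:regional} in dimension $n+1$ relies (via Theorem~\ref{thm:globtoloc}) on Theorem~\ref{thm:global} in dimension $n$. So invoking Theorem~\ref{thm:regional} to prove Theorem~\ref{thm:global} does not break the mutual dependence --- it presupposes exactly the statement one wants to establish.

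Concretely, your cone takes an $n$-dimensional weakly Fano pair $(X,\Delta)$ and produces an $(n+1)$-dimensional klt singularity $o \in (C,\Delta_C)$. Applying Theorem~\ref{thm:regional} to $o$ then yields Theorem~\ref{thm:global} for $X$. This is precisely the \emph{converse} of Theorem~\ref{thm:globtoloc} --- and indeed the plt blowup extracting the Koll\'ar component, used in the proof of Theorem~\ref{thm:globtoloc}, is the inverse of the cone construction. Combining your step ``$(n+1)$-dim klt $\Rightarrow$ $n$-dim weakly Fano'' with Theorem~\ref{thm:globtoloc} ``$n$-dim weakly Fano $\Rightarrow$ $(n+1)$-dim klt'' gives a logical loop, not an induction. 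What actually makes the induction ascend is Theorem~\ref{thm:loctoglob}, which passes from $n$-dimensional klt singularities to $n$-dimensional weakly Fano pairs \emph{in the same dimension}; this is the genuinely hard step (it requires the orbifold $\Gamma$-reduction, $L^2$-estimates, and Gromov's Poincar\'e series argument on a log resolution), and the cone construction cannot substitute for it because it shifts dimension in the wrong direction.

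On the technical side, the ingredients you assemble are mostly sound: the perturbation to an ample $-(K_X+\Delta)$ (write it as nef plus $\delta$ times an ample, absorbing a small effective divisor into $\Delta''$), the klt-ness of the affine cone when $L$ is proportional to $-(K_X+\Delta)$, and the Seifert $S^1$-fibration over $X_\sm \setminus \supp(\Delta')$ with its homotopy exact sequence are all standard. The ``delicate point'' you flag about $E$ avoiding $\supp(\Delta')$ is in fact a non-issue: by definition $\pi_1(X_\sm,\Delta')$ depends only on $X_\sm$ and $\Delta'$, so changing $\Delta''$ cannot create new loops. The gap is purely structural --- you would need an independent proof of Theorem~\ref{thm:regional} (not relying on Theorem~\ref{thm:global}), and neither this paper nor the existing literature provides one.
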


Before sketching the structure of the (simultaneous) proof of these theorems, we give a short overview of related results and state some consequences.

\subsection*{Fundamental groups of the whole space}
Fano \emph{manifolds} are known to be simply connected, and there are several proofs of this fact, relying for example on Atiyah's $L^2$-index theorem or rational connectedness. 
Generalizing the  smooth case, it was shown by Takayama~\cite{Takayama} that also weakly Fano varieties are simply connected. In fact, Takayama proves finiteness of the fundamental group of a log resolution. The corresponding local statement - simply connectedness of the preimage of a small neighbourhood of $x$ under a log resolution -  was proven by Koll\'ar~\cite{KollarShaf2} for quotient singularities and by Takayama for klt singularities~\cite{TakayamaLocalSimple}. The proof in~\cite{TakayamaLocalSimple} is  similar to that of~\cite{Takayama}, but the latter manages to avoid the $L^2$-index theorem, which turns out to be very important to us.

Simply connectedness also holds true for \emph{log canonical} pairs $(X,\Delta)$ with ample $-(K_X+\Delta)$, see~\cite{Fujino}.

\subsection*{\'Etale fundamental groups}
Conjecture~\ref{con:local} and Theorem~\ref{thm:global} have been confirmed by Xu for \emph{\'etale fundamental groups} $\hat{\pi}_1$ in~\cite[Thms.~1,~2]{Xu}, see also~\cite[Thm.~1.13]{GrebKebPet}. 
The \'etale or algebraic  fundamental group is just the profinite completion of the topological fundamental group.
Building on Xu's results, Greb, Kebekus, and Peternell showed in~\cite[Thm.~1.5]{GrebKebPet}, that a quasiprojective klt variety $X$ allows a finite quasi-\'etale cover  $Y \to X$ such that $\hat{\pi}_1(Y_\sm)$ is isomorphic to $\hat{\pi}_1(Y)$.

Analogous statements for $F$-regular singularities and strongly $F$-regular schemes in characteristic $p$ can be found in~\cite{FundGroupFreg1, FundGroupFreg2}. It is also possible to deduce the statements in characteristic zero from the ones in characteristic $p$~\cite{FundRedModP}.

Also Conjecture~\ref{con:local} was confirmed for log terminal singularities with a good torus action~\cite{LafaceLiendoMoraga}. 

In general, it is possible that $\pi_1$ is infinite while $\hat{\pi}_1$ is trivial. We give an example of such group (the Thompson group $T$) in the follow-up of this introduction.

\subsection*{Regional fundamental groups}
The regional fundamental group $\pi_1^\reg$ has - not under this name - already been considered in~\cite{Kumar, TianXu, Stibitz}. Of course, if $x$ is isolated, both local and regional fundamental group coincide.
We think that $\pi_1^\reg$ is the more natural notion for non-isolated $x$. In particular, the proof of our two main theorems would not be possible considering only $\pi_1^\loc$.

In~\cite[Thm.~2.2.6]{StibDiss}, Theorem~\ref{thm:regional} was proven for the profinite completion $\hat{\pi}_1^\reg$.
Stibitz also gave an example~\cite[Ex.~2]{Stibitz} of a non-isolated  (non-klt) singularity, where all $\hat{\pi}_1^\loc$ are finite, but $\hat{\pi}_1^\reg$ is infinite.

\subsection*{Consequences (and non-consequences) of our main theorems}
We already mentioned that building on the results of~\cite{Xu}, in~\cite[Thm.~1.5]{GrebKebPet} it was shown that a quasiprojective klt pair $(X,\Delta)$ allows a finite quasi-\'etale cover  $Y \to X$ such that $\hat{\pi}_1(Y_\sm)$ is isomorphic to $\hat{\pi}_1(Y)$. One can ask if our results imply the same statement for the topological fundamental group. Unfortunately, this is \emph{not} true. The reason is very simple: since the topological fundamental group $\pi_1(X_\sm)$ is not necessarily profinite, it can happen that there is \emph{no} finite index normal subgroup intersecting the (finite) images of the regional fundamental groups $\pi_1^\reg(X,x)$ of points $x$ in $(X,\Delta)$ nontrivially, in contrary to the case of \'etale fundamental groups in~\cite[(i)$\Rightarrow$(ii),p.~7]{Stibitz}. So even if $\pi_1(X)$ and $\hat{\pi}_1(X_\sm)$ both are trivial, it can happen that $\pi_1(X_\sm)$ is infinite.

On the other hand, by~\cite[Prop.~3.6]{TianXu}, we obtain the following direct consequence of Theorem~\ref{thm:regional}, which can be seen as an \emph{infinite} version of~\cite[Thm.~1.5]{GrebKebPet} (note that~\cite[Prop.~3.6]{TianXu} requires finiteness of the regional fundamental group).

\begin{introcor}
Let $(X,\Delta'+\Delta'')$ be a quasiprojective klt pair. Then every \'etale Galois orbifold cover of the orbifold $(X_\sm,\left. \Delta' \right|_{X_\sm})$ - that is every (possibly infinite) cover of $X_\sm$, coming from a quotient of the orbifold fundamental group $\pi_1(X_\sm,\left. \Delta' \right|_{X_\sm})$ by some normal subgroup - extends to a Galois orbifold cover of the orbifold $(X,\Delta')$. In particular, there is a (possibly infinite) Galois orbifold cover $(Y,\Delta_Y) \to (X,\Delta')$, \'etale (as orbifold cover) over $X_\sm$,  such that the orbifold fundamental groups $\pi_1(Y,\Delta_Y)$ and $\pi_1(Y_\sm,\left. \Delta_Y \right|_{Y_\sm})$  are isomorphic.
\end{introcor}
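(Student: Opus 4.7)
The plan is to apply Theorem~\ref{thm:regional} in order to meet the finiteness hypothesis of \cite[Prop.~3.6]{TianXu} at every point of $X$, and then run a local-to-global extension argument: given a Galois orbifold cover of $(X_\sm,\Delta'|_{X_\sm})$, extend it locally around each singular point via \cite[Prop.~3.6]{TianXu} and glue along $X_\sm$.

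Concretely, let $W \to (X_\sm,\Delta'|_{X_\sm})$ be an \'etale Galois orbifold cover corresponding to a normal subgroup $N$ of $\pi_1(X_\sm,\Delta'|_{X_\sm})$, of possibly infinite index. For each $x \in X$, choose a small euclidean ball $B_x$ around $x$ and restrict $W$ to the punctured neighbourhood $(B_{x,\sm},\Delta'|_{B_{x,\sm}})$. Because $\pi_1^\reg(X,\Delta',x)$ is finite by Theorem~\ref{thm:regional}, the subgroup of $\pi_1^\reg(X,\Delta',x)$ classifying each connected component of the restriction has finite index, so the restriction is a \emph{finite} orbifold cover on each component. By \cite[Prop.~3.6]{TianXu}, each such finite cover extends (uniquely, by normalization) to an orbifold cover of $(B_x,\Delta'|_{B_x})$. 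These local extensions glue with $W$ along $X_\sm$ to yield a Galois orbifold cover $(Y,\Delta_Y) \to (X,\Delta')$, \'etale as an orbifold cover over $X_\sm$.

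For the ``in particular'' statement, apply the above construction to the universal orbifold cover of $(X_\sm,\Delta'|_{X_\sm})$, i.e.\ to $N = \{1\}$, and let $(Y,\Delta_Y) \to (X,\Delta')$ be its extension. Then $\pi_1(Y_\sm,\Delta_Y|_{Y_\sm}) = 1$ by construction. At each $y \in Y \setminus Y_\sm$ the local cover is by construction the universal orbifold cover of a punctured neighbourhood of $\pi(y)$, so the regional orbifold fundamental group of $Y$ at $y$ is trivial. Since the natural map $\pi_1(Y_\sm,\Delta_Y|_{Y_\sm}) \to \pi_1(Y,\Delta_Y)$ is surjective with kernel normally generated by these regional groups, we conclude $\pi_1(Y,\Delta_Y) = \pi_1(Y_\sm,\Delta_Y|_{Y_\sm})$, as required.

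I expect the main subtlety to lie in the glueing step: verifying that the local orbifold extensions assemble into a single Galois orbifold cover with a coherent (possibly infinite) deck group action, carried out in the orbifold category and accommodating non-isolated singular sets along which $\pi_1^\loc$ and $\pi_1^\reg$ genuinely differ.
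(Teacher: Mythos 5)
Your proposal is correct and follows the same route as the paper: the corollary is obtained by combining Theorem~\ref{thm:regional} (finiteness of $\pi_1^\reg$) with \cite[Prop.~3.6]{TianXu}, which is precisely the extension statement you unpack. The paper treats this as a one-line deduction (since \cite[Prop.~3.6]{TianXu} already contains the local-extension-and-glueing machinery you redevelop), so your worry about the glueing step is addressed by that reference rather than requiring new work.
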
  

Note that it is also possible to deduce from Theorem~\ref{thm:regional} by the  same arguments as in~\cite[Part~II,~Sec.~6.1]{GrebKebPet} an \emph{infinite} version of~\cite[Thm.~1.1]{GrebKebPet}: if $(X,\Delta)$ is a quasiprojective klt pair, then in any tower $X=X_0 \xleftarrow{\phi_1} X_1 \xleftarrow{\phi_2} X_2 \xleftarrow{\phi_3} \cdots$ of \emph{possibly infinite} quasi-\'etale covers $\phi_i$, such that $\phi_1 \circ \ldots \circ \phi_i \colon X_i \to X$ is Galois for every $i\geq 1$, all but finitely many of the $\phi_i$ are \'etale. In contrary to the finite versions from~\cite{GrebKebPet}, we have no idea if these statements could be of any use. 

We come to another  consequence.
It is known that the \emph{Cox ring} of a weakly Fano variety is finitely generated~\cite{BCHM}, and analogously, this holds for a klt quasicone~\cite{GorICR}. In particular, the divisor class group $\Cl(X)$ of any such object $X$ is finitely generated of the form $\ZZ^m \times \Cl(X)_\fin$ with $\Cl(X)_\fin$ a finite abelian group. The Cox ring is graded by $\Cl(X)$, which yields a quasi-\'etale quotient $\hat{X} \to \hat{X}/H=X$, where $\hat{X}$ is a quasiaffine variety, the so-called \emph{characteristic space}, and $H$ is a linear algebraic group of the form $(\CC^*)^m\times \Cl(X)_\fin$~\cite[Sec.~I.6.1]{coxrings}. The quotient of $\hat{X}$ by the torus $(\CC^*)^m$ yields a quasi-\'etale finite abelian Galois cover of $X$, which is universal with this property by~\cite[Prop.~2.2(iii)]{GorICR}. That means it factors through all quasi-\'etale finite abelian Galois covers of $X$. So we have the following consequence of Theorems~\ref{thm:regional} and~\ref{thm:global} - immediate from the previous discussion, which tells us that $\Cl(X)_\fin$ is the abelianization of $\pi_1(X_\sm)$.

\begin{introcor}
\label{cor:cox}
Let $X$ be a weakly Fano variety or a klt quasicone. Then the finite part of the divisor class group of $X$ is isomorphic to the first homology group of the smooth locus of $X$:
$$
\Cl(X)_\fin \cong H_1(X_\sm,\ZZ).
$$
\end{introcor}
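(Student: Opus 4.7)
The plan is to identify both $\Cl(X)_{\fin}$ and $H_1(X_\sm,\ZZ)$ with the Galois group of a single object, namely the \emph{maximal finite abelian quasi-étale Galois cover} of $X$, and to conclude that these two finite abelian groups must then be canonically isomorphic.

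The identification on the algebraic side is already set up in the paragraph preceding the statement: the characteristic space $\hat X$, divided by the torus $(\CC^*)^m$, produces a quasi-étale finite abelian Galois cover of $X$ with Galois group $\Cl(X)_{\fin}$, and by \cite[Prop.~2.2(iii)]{GorICR} every quasi-étale finite abelian Galois cover of $X$ factors through this one. So the left-hand side is the Galois group of the universal such cover.

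For the topological side, I would first observe that since $X$ is normal, the singular locus has codimension at least two, and so the normalization functor realizes a bijection between finite étale Galois covers of $X_\sm$ and finite quasi-étale Galois covers of $X$, preserving Galois groups. Consequently, the quasi-étale finite abelian Galois covers of $X$ are in bijection with the surjections from $\pi_1(X_\sm)$ onto finite abelian groups, i.e., with the surjections from $H_1(X_\sm,\ZZ) = \pi_1(X_\sm)^{\mathrm{ab}}$. The crucial input now is the finiteness of $\pi_1(X_\sm)$: in the weakly Fano case this is Theorem~\ref{thm:global} applied with $\Delta = 0$, and in the klt quasicone case it follows from Theorem~\ref{thm:regional} at the apex, because a quasicone retracts onto a regional neighbourhood of the vertex so that $\pi_1(X_\sm) = \pi_1^\reg(X,\mathrm{apex})$. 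Once we know $H_1(X_\sm,\ZZ)$ is finite, the abelianization map itself is the maximal finite abelian quotient, and its associated cover extends uniquely to the maximal finite abelian quasi-étale Galois cover of $X$, whose Galois group is $H_1(X_\sm,\ZZ)$.

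Matching the two universal objects gives the claimed isomorphism. The entire non-trivial input is the finiteness of $\pi_1(X_\sm)$ furnished by Theorems~\ref{thm:regional} and~\ref{thm:global}; without it one would only recover the profinite completion $\hat\pi_1(X_\sm)^{\mathrm{ab}}$ on the right-hand side, which is the weaker statement already known from \cite{Xu, GrebKebPet}. The remaining ingredients — Cox-ring universality and the normalization correspondence for quasi-étale covers of a normal variety — are entirely standard, so I do not expect them to present any obstacle.
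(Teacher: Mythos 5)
Your proposal is correct and follows essentially the same route as the paper: the paper also identifies $\Cl(X)_\fin$ as the Galois group of the universal quasi-\'etale finite abelian cover coming from the torus quotient of the characteristic space, and then invokes finiteness of $\pi_1(X_\sm)$ (from Theorems~\ref{thm:regional} and~\ref{thm:global}) to conclude that this coincides with the abelianization $H_1(X_\sm,\ZZ)$. You merely spell out two details the paper leaves implicit, namely the normalization correspondence between covers of $X_\sm$ and quasi-\'etale covers of $X$, and the retraction identifying $\pi_1(X_\sm)$ with $\pi_1^\reg(X,\mathrm{apex})$ in the quasicone case.
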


In~\cite{GorICR}, it was shown that for weakly Fano varieties and klt quasicones an \emph{iteration of Cox rings} is finite. That is, one takes the Cox ring of $\hat{X}$, which is possible since in both cases, this space is a Gorenstein canonical quasicone~\cite[Thm.~3]{GorICR} - and iterates this procedure. After finitely many steps, one gets a  quasi-\'etale quotient $Z \to X$ by a solvable reductive group, that is a group of the form $G=(\CC^*)^m \rtimes S$. Here the \emph{iterated characteristic space} $Z$ is factorial and $S$ is a finite solvable group. The iteration of Cox rings is reflected by the derived series of $G$. The quotient of $Z$ by the normal subgroup $(\CC^*)^m$ yields a universal finite solvable cover $Z/(\CC^*)^m \to X$. Thus Theorems~\ref{thm:regional} and~\ref{thm:global} yield the following solvable version of Corollary~\ref{cor:cox}.

\begin{introcor}
\label{cor:cox2}
Let $X$ be a weakly Fano variety or a klt quasicone. Let $Z \to X$ be the iterated characteristic space with general fiber of the form $(\CC^*)^m \rtimes S$. Then the finite part $S$ is isomorphic to the solubilization of $\pi_1(X_\sm)$.
\end{introcor}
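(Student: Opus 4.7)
The plan is to mimic the argument for Corollary~\ref{cor:cox} sketched just above the statement, replacing the word ``abelian'' by ``solvable'' throughout. The only geometric input will be the finiteness of $\pi_1(X_\sm)$: in the weakly Fano case this is exactly Theorem~\ref{thm:global}, and in the klt quasicone case one applies Theorem~\ref{thm:regional} at the vertex $x$, using that $X$ deformation retracts onto a small neighbourhood of $x$ via the $\CC^*$-action, which identifies $\pi_1(X_\sm)$ with $\pi_1^\reg(X,x)$.

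Once $\pi_1(X_\sm)$ is known to be finite, finite Galois quasi-\'etale covers of $X$ that are \'etale over $X_\sm$ correspond bijectively to normal subgroups of $\pi_1(X_\sm)$, with the quotient serving as the Galois group; such a cover is solvable precisely when this quotient is a solvable group. The construction recalled from~\cite{GorICR} immediately before the statement produces $Z/(\CC^*)^m \to X$ as a \emph{universal} finite solvable quasi-\'etale Galois cover of $X$ with Galois group $S$. Unwinding universality through this correspondence, $S$ must be the largest solvable quotient of $\pi_1(X_\sm)$; equivalently, $S \cong \pi_1(X_\sm)/N_0$, where $N_0$ is the intersection of all normal subgroups with solvable quotient. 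For the finite group $\pi_1(X_\sm)$ this intersection is exactly the solvable residual, so $S$ is identified with the solubilization of $\pi_1(X_\sm)$, as required.

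The hard part does not lie in the present argument but in the two main theorems it invokes; once Theorems~\ref{thm:regional} and~\ref{thm:global} are available, Corollary~\ref{cor:cox2} is a purely formal consequence of the universality of the iterated Cox-ring cover established in~\cite{GorICR}, parallel in every respect to the deduction of Corollary~\ref{cor:cox}.
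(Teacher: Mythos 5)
Your proposal is correct and follows exactly the route the paper intends: deduce finiteness of $\pi_1(X_\sm)$ from Theorems~\ref{thm:regional} and~\ref{thm:global} (identifying $\pi_1(X_\sm)$ with $\pi_1^\reg(X,x)$ at the vertex in the quasicone case), then use the universality of the finite solvable cover $Z/(\CC^*)^m \to X$ established in~\cite{GorICR} to conclude that $S$ is the maximal solvable quotient, i.e.\ the solubilization. The paper leaves this deduction implicit in the paragraph preceding the statement; your write-up merely makes the covering-space correspondence and the identification of the solvable residual explicit, which is precisely what is meant.
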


A corollary merely of the definition of the regional fundamental group is inspired by a result of Serre~\cite[Prop.~15]{Serre}, saying that any finite group is the fundamental group of a smooth projective variety. 

\begin{introcor}
Let $G$ be a finite group.
Then $G$ has a complex linear representation with no pseudoreflections.
In particular, there exists a quotient singularity $(X,x)=\CC^n/G$, such that $\pi_1^\reg(X,x)=G$.  
\end{introcor}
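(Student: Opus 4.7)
The plan is to first produce a finite-dimensional faithful complex representation of $G$ in which no nontrivial element acts as a pseudoreflection, and then identify the regional fundamental group of the resulting quotient singularity with $G$.

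For the representation-theoretic part I would start with any faithful representation $V$ of $G$ (for instance the regular representation) and pass to $W := V \oplus V$ with the diagonal $G$-action. For any $g \in G \setminus \{e\}$, faithfulness forces the fixed subspace $V^g$ to have codimension $\geq 1$, so $W^g = V^g \oplus V^g$ has codimension $\geq 2$. Hence no nontrivial $g$ acts as a pseudoreflection on $W$.

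Now set $n := \dim W$, $X := \CC^n/G$, and let $x$ be the image of $0 \in W$. By the Chevalley--Shephard--Todd theorem, since no nontrivial stabilizer is generated by pseudoreflections, the singular locus $X_\sing$ is exactly the image of $\Sigma := \bigcup_{g \neq e} W^g$, a finite union of complex-linear subspaces, each of complex codimension $\geq 2$ by the previous step. Choose a small euclidean ball $U$ around $0 \in W$ and set $B := U/G$ and $U^\circ := U \setminus \Sigma$. Then $G$ acts freely on $U^\circ$ with quotient $U^\circ/G = B \setminus X_\sing = B_\sm$, so $U^\circ \to B_\sm$ is an unramified Galois cover with group $G$. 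Since $U$ is contractible and $\Sigma$ has real codimension $\geq 4$ in $U$, the complement $U^\circ$ is simply connected. Consequently $U^\circ \to B_\sm$ is the universal cover and $\pi_1^\reg(X,x) = \pi_1(B_\sm) = G$.

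The only even mildly subtle input is the simple-connectedness of $U^\circ$, which is the standard fact that removing a real codimension $\geq 4$ subset from a manifold does not change $\pi_1$ (via a transversality argument on loops and their homotopies); once this is granted, the corollary is essentially immediate from Chevalley--Shephard--Todd.
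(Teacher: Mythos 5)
Your proof is correct and uses the same basic construction as the paper (passing from a faithful representation $V$ to $W = V \oplus V$), but the argument that $W$ has no pseudoreflections is different and more elementary. The paper argues by contradiction: if $g$ were a reflection on $V \oplus V$, one restricts to $\langle g \rangle$, obtaining a reducible representation with a pointwise-fixed hyperplane $H$, and then invokes a cited theorem on reducible representations of finite reflection groups to conclude that one of the two copies of $V|_{\langle g \rangle}$ must lie inside $H$ --- contradicting faithfulness of $V$. You instead observe directly that for any $g \neq e$ faithfulness forces $\codim_V V^g \geq 1$, so $W^g = V^g \oplus V^g$ has codimension $2\,\codim_V V^g \geq 2$; hence $g$ cannot be a pseudoreflection. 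This is shorter, self-contained, and avoids the external reference. You also spell out the topological identification $\pi_1^\reg(X,x) = G$ --- via Chevalley--Shephard--Todd to identify $X_\sing$ with the image of $\Sigma = \bigcup_{g \neq e} W^g$, and the standard fact that removing a set of real codimension $\geq 3$ from a contractible manifold does not change $\pi_1$ --- whereas the paper leaves this step terse (``ramified only in codimension two, so $\pi_1^\reg(X,x) = G$''). Both arguments are valid; yours is the more detailed and more elementary write-up.
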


\begin{proof}
 Let $G$ be a finite group and $V$ a complex linear faithful representation. If $V$ has a reflection, consider the sum $V+V$. If this representation of $G$ has a reflection $g$, then consider the restricted representation $\left.V\right|_{\langle g \rangle}+\left.V\right|_{\langle g \rangle}$ of the subgroup $\langle g \rangle$, containing a pointwise fixed hyperplane $H$. Since this representation is reducible, by (the proof of)~\cite[Thm.~1]{RedFinRef}, one of the copies of $\left.V\right|_{\langle g \rangle}$ is contained in $H$. This is a contradiction, since $V$ was faithful. The quotient $(X,x):=(V+V)/G$ is thus ramified only in codimension two, so $\pi_1^\reg(X,x)=G$.
\end{proof}

\subsection*{The proof of Theorems~\ref{thm:regional} and~\ref{thm:global}}
As done by~\cite{Xu} in the case of the \'etale fundamental group, we will prove Theorems~\ref{thm:regional} and~\ref{thm:global} simultaneously by a local-to-global induction. One induction step is represented by the following two theorems.

\begin{introthm}
\label{thm:loctoglob}
Let $(Y,D'+D'')$ be an $n$-dimensional weakly Fano pair. Assume that  $n$-dimensional klt singularities $x \in (X,\Delta)$ have finite regional fundamental group. Then the orbifold fundamental group $\pi_1(Y_\sm,\left.D'\right|_{Y_\sm})$ is finite.
\end{introthm}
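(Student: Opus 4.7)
The plan is to use the local hypothesis to realize $G := \pi_1(Y_\sm, D'|_{Y_\sm})$ as the deck group of a normal analytic Galois cover $\pi\colon \tilde Y \to Y$, and then to invoke Atiyah's $L^2$-index theorem together with Kodaira-type vanishing to force the cover --- and hence $G$ --- to be finite.

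First, I would construct the orbifold universal cover $\pi\colon \tilde Y \to Y$ from the local data. At every $y \in Y$, by the local hypothesis $\pi_1^\reg(Y, D', y)$ is finite, so the orbifold universal cover of a small analytic ball $B \ni y$ is a \emph{finite} normal analytic branched cover of $B$. Gluing these local finite orbifold covers, indexed by the cosets $G/H_y$ where $H_y \leq G$ is the image of $\pi_1^\reg(Y, D', y)$, produces a global normal analytic Galois cover $\tilde Y \to Y$ with deck group $G$. This cover is compact precisely when $G$ is finite.

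The map $\pi$ is quasi-\'etale as an orbifold morphism, so $K_{\tilde Y} + \tilde D'' = \pi^*(K_Y + D' + D'')$ for the appropriately pulled-back boundary $\tilde D''$; in particular $(\tilde Y, \tilde D'')$ is klt and $-\pi^*(K_Y+D'+D'')$ is big and nef. Applying Atiyah's $L^2$-index theorem to the Galois cover $\pi$ and to the structure sheaf, together with Kawamata--Viehweg vanishing on $(Y, D'+D'')$ (which gives $H^i(Y, \mathcal{O}_Y) = 0$ for $i > 0$), one obtains
$$\chi_{(2)}(\tilde Y, \mathcal{O}_{\tilde Y}) \;=\; \chi(Y, \mathcal{O}_Y) \;=\; 1.$$

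If $G$ were infinite, $\tilde Y$ would be non-compact. An $L^2$-Kodaira vanishing on $\tilde Y$ (using the nef log-anticanonical divisor pulled back from $Y$) then yields $H^i_{(2)}(\tilde Y, \mathcal{O}_{\tilde Y}) = 0$ for all $i \geq 0$, contradicting $\chi_{(2)} = 1$; hence $G$ must be finite. The main obstacle is twofold: first, globalizing the orbifold universal cover into a well-behaved normal analytic Galois cover above singular points and boundary components requires the full strength of the local hypothesis; second, establishing the vanishing $H^0_{(2)}(\tilde Y, \mathcal{O}_{\tilde Y}) = 0$ on the non-compact cover demands a careful adaptation of $L^2$-cohomology techniques to the orbifold weakly Fano setting --- precisely the role of the $L^2$-index theorem singled out as crucial in the introduction.
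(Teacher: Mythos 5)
Your proposal takes essentially the approach that the paper \emph{explicitly rejects}, and misreads the introduction in doing so: you write that the $L^2$-index theorem is ``singled out as crucial'' there, but the paper in fact says the opposite --- the whole point of Takayama's and the present paper's argument is to \emph{avoid} the $L^2$-index theorem, because in the orbifold setting it does not give the clean equality you rely on.

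The central identity in your plan,
$$
\chi_{(2)}(\tilde Y, \mathcal{O}_{\tilde Y}) = \chi(Y, \mathcal{O}_Y) = 1,
$$
is not available here. Atiyah's $L^2$-index theorem compares $\chi_{(2)}$ of a cover with $\chi$ of the base for a \emph{free} cocompact action on a \emph{manifold}. Your cover $\pi\colon \tilde Y \to Y$ is an orbifold (branched) cover: the deck group $G$ acts with fixed points along the branch divisor and over the orbifold-singular locus, so the action is not free, and the $\Gamma$-index of the lift picks up correction terms coming from the local isotropy; the equality $\chi_{(2)}(\tilde Y) = \chi(Y)$ simply fails in general. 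This is exactly the subtlety the paper points to by citing \cite[Sec.~4.1]{TianXu}, and it is the reason the author declares themselves ``mildly sceptical about the possibility of proving Theorem~\ref{thm:loctoglob} in full generality using the orbifold $L^2$-index theorem.'' In addition, $Y$ and $\tilde Y$ are singular analytic spaces rather than manifolds, so even stating the $L^2$-index theorem for $\mathcal{O}_Y$ and its Galois cover requires first passing to a resolution --- which your argument never does.

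The paper's actual proof is quite different and sidesteps these issues entirely. It uses the local hypothesis not to build a global cover of $Y$, but to show that, after a log resolution $f\colon X \to Y$, each small loop $\gamma_i$ around an exceptional divisor $E_i$ has \emph{finite order} $m_i$ in $\pi_1(Y_\sm, D')$. This allows replacing the non-compact $Y_\sm$ by the \emph{compact smooth geometric orbifold} $\orbi{X} = (X, f_*^{-1}D' + \sum(1-1/m_i)E_i)$, whose orbifold fundamental group agrees with $\pi_1(Y_\sm,D')$. Finiteness of $\pi_1(\orbi{X})$ is then established via Claudon's orbifold $\Gamma$-reduction and an explicit construction of an $L^2$-section on the orbifold universal cover (Proposition~\ref{prop:nontrivsec}), followed by Gromov's Poincar\'e-series argument~\cite{GromovKaehler} to contradict the effectivity of the exceptional divisor $E$ if $\pi_1(\orbi{X})$ were infinite. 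No index-theoretic comparison of Euler characteristics appears anywhere. You would need to replace the $\chi_{(2)}$ step of your argument by something along these lines, or else supply the orbifold correction terms and show they still yield a contradiction --- a genuinely harder route that the paper chose not to take.
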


\begin{introthm}
\label{thm:globtoloc}
Let $x \in (X,\Delta'+\Delta'')$ be an $(n+1)$-dimensional klt singularity. 
Assume that the orbifold fundamental group $\pi_1(Y_\sm,\left.D'\right|_{Y_\sm})$ of $n$-dimensional weakly Fano pairs $(Y,D'+D'')$ is finite. Then $\pi_1^\reg(X,\Delta',x)$ is finite. 
\end{introthm}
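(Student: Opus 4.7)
The plan is to adapt Xu's \'etale-fundamental-group strategy to the topological setting by passing to a Koll\'ar component. First apply the existence theorem of Koll\'ar and Xu to produce a projective birational morphism $f\colon Y\to X$ with a single exceptional prime divisor $E$ lying over $x$ such that $(Y,E+\tilde\Delta)$ is plt, where $\tilde\Delta=f^{-1}_*\Delta$, and such that $-(K_Y+E+\tilde\Delta)$ is $f$-ample. Inversion of adjunction then makes $(E,\mathrm{Diff}_E)$ an $n$-dimensional weakly log Fano pair with an induced decomposition $\mathrm{Diff}_E=\mathrm{Diff}'_E+\mathrm{Diff}''_E$, and the standing hypothesis gives finiteness of $G:=\pi_1(E_\sm,\mathrm{Diff}'_E|_{E_\sm})$.

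Next perform the topological reduction. For a sufficiently small Stein neighbourhood $B$ of $x$, set $U:=f^{-1}(B)$. Since $f$ is an isomorphism over $X\setminus\{x\}$, it identifies $U\setminus E$ with $B\setminus\{x\}$, the divisor $\tilde\Delta'$ with $\Delta'$, and $Y_\sing\setminus E$ with $X_\sing\setminus\{x\}$. Hence $\pi_1^\reg(X,\Delta',x)$ is canonically the orbifold fundamental group of $(U_\sm\setminus E,\tilde\Delta')$. After shrinking $B$, the morphism $f$ gives a strong deformation retraction of $U$ onto $E$ that respects the stratification, and the boundary of a small tubular neighbourhood of $E$ in $Y$ is, on the locus where $Y$ is smooth along $E$, a circle bundle pulled back from the normal bundle $N_{E/Y}$. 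The associated long exact sequence of homotopy groups, combined with the orbifold structure coming from $\tilde\Delta'$, yields an exact sequence
$$
K\longrightarrow \pi_1^\reg(X,\Delta',x)\longrightarrow G\longrightarrow 1,
$$
where $K$ is the image of $\pi_1(S^1)=\ZZ$ generated by a small loop around $E$.

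The main obstacle is showing that $K$ is finite. In the smooth simply connected case this is an elementary divisibility argument: the Euler class $c_1(N_{E/Y})$ is anti-ample by the $f$-ampleness of $-E$, and pairing it with any class in the image of the Hurewicz map $\pi_2(E)\to H_2(E)$ kills a multiple of the fiber generator in $\pi_1(U_\sm\setminus E)$. In the klt/plt setting with boundary $\tilde\Delta$ and $Y$ possibly singular along $E$, the approach signalled as central in the introduction is analytic: apply the $L^2$-index theorem to a hypothetical infinite \'etale orbifold cover of $U_\sm\setminus E$ associated to an infinite-order element of $K$, and use the positivity of $-(K_Y+E+\tilde\Delta)$ to produce enough $L^2$-holomorphic sections on that cover to contradict the finite orbifold Euler characteristic of the log Fano pair $(E,\mathrm{Diff}_E)$. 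Bookkeeping $\tilde\Delta$ and the singularities of $Y$ along $E$ in these $L^2$-estimates is the technical heart of the argument.
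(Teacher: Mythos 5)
Your setup is the same as the paper's: both pass to the plt blowup $f\colon Y\to X$ extracting the Koll\'ar component $E=f^{-1}(x)$, make $(E,\mathrm{Diff}_E)$ a weakly Fano pair, and want to leverage finiteness of its orbifold $\pi_1$. However, there is a genuine gap at the step ``yields an exact sequence $K\to\pi_1^\reg(X,\Delta',x)\to G\to 1$.'' This is not a routine long exact sequence of a fibration: the punctured neighbourhood $U\setminus(E\cup Y_\sing)$ is not a circle bundle over $E_\sm$, and relating the top-stratum piece (which \emph{is} a circle/disk bundle over the big stratum) to $\pi_1^\reg$ is precisely the delicate Seifert--van Kampen argument that the paper shows fails in the naive formulation of~\cite[Le.~3.4]{TianXu}: one gets a map from the top-stratum piece into $\pi_1^\reg$, but surjectivity must be earned by an induction over a Whitney stratification of $Y$ (not just of $E$), adding tubular neighbourhoods of singular strata one at a time, and at each stage checking that $\pi_1(\del Z_{a,t}\setminus Y_\sing)\to\pi_1(Z_{a,t}\setminus Y_\sing)$ is surjective. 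This last surjectivity uses, crucially, that $\del Z_{a,t}\setminus Y_\sing$ is the regional link of a lower-dimensional klt singularity, which ties the argument to the regional (not local) notion. Your proposal compresses all of this into a single sentence, exactly where the difficulty lives.

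Secondly, you have misplaced the analytic heart of the paper. The $L^2$-index / Gromov-type argument that the introduction highlights belongs to the local-to-global direction (Theorem~\ref{thm:loctoglob}), not to this global-to-local theorem. For Theorem~\ref{thm:globtoloc}, once the modified surjection $\pi_1(V^0_{E_0})\twoheadrightarrow\pi_1^\reg(X,x)$ is established, finiteness of $\pi_1(V^0_{E_0})$ follows directly from~\cite[Le.~3.2]{TianXu} combined with the induction hypothesis; no $L^2$ machinery is invoked here, and devising a new $L^2$ argument for the finiteness of the fiber class $K$ is both unnecessary and unsupported. In short, the Koll\'ar-component reduction is right, but you have hand-waved the Whitney-stratification surjectivity (which is the actual content of the proof) while inventing an analytic difficulty where none exists.
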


It is clear that proving these two theorems yields a simultaneous proof of Theorems~\ref{thm:regional} and~\ref{thm:global}.

The global-to-local part Theorem~\ref{thm:globtoloc} has been proven by Tian and Xu in~\cite[Le.~3.1,3.2]{TianXu} for $\pi_1^\loc$. Then in~\cite[Le.~3.4]{TianXu}, they deduce finiteness of $\pi_1^\reg$ of a klt singularity from finiteness of $\pi_1^\loc$ for all lower dimensional klt singularities.  Unfortunately, there is a small gap in the proof, when the Seifert-van Kampen theorem is applied to certain tubular neighbourhoods of a Whitney stratification. A careful analysis is taken out in Section~\ref{sec:WorkTianXu} of the present paper. In fact, it turns out that this task is equally hard as trying to prove Theorem~\ref{thm:loctoglob} with the same methods  and assuming only finiteness of $\pi_1^\loc$ instead of $\pi_1^\reg$.

So in order for the induction to work, we really need the $\pi_1^\reg$-version of Theorem~\ref{thm:globtoloc}. When we realized that we cannot use~\cite[Le.~3.4]{TianXu}, Tian and Xu proposed to us to modify~\cite[Le.~3.1]{TianXu} for a direct proof avoiding their Lemma 3.4. After analyzing Lemma 3.1 in Section~\ref{sec:WorkTianXu}, we carry out this modification in Section~\ref{sec:Le31mod} and thus are able to prove Theorem~\ref{thm:globtoloc} in full generality.

The main part of the present paper is the proof of Theorem~\ref{thm:loctoglob}. So we have to prove finiteness of an orbifold fundamental group $\pi_1(Y_\sm,\left.D'\right|_{Y_\sm})$. In contrast to the proofs of simply connectedness of Fano manifolds using Atiyah's $L^2$-index theorem, we encounter two main difficulties. Firstly, $Y_\sm$ is not compact. Secondly, the orbifold version of the $L^2$-index theorem is more subtle, since for a universal orbifold cover $\widetilde{\orbi{X}} \to \orbi{X}$, the $L^2$-index on $\widetilde{\orbi{X}}$ is \emph{not} equal to  the Euler characteristic on $\orbi{X}$, as there  are contributions from orbifold points, see~\cite[Sec.~4.1]{TianXu}. The problems can be seen in Tian and Xu's proof of Theorem~\ref{thm:loctoglob} in the special case of $3$-dimensional Fano varieties with canonical singularities~\cite[Thm.~4.1]{TianXu}. Thus we are mildly sceptical about the possibility of proving Theorem~\ref{thm:loctoglob} in full generality using the orbifold $L^2$-index theorem.

The solution is the following. As we already mentioned, the proof of simply connectedness of weakly Fano varieties $X$ of Takayama~\cite{Takayama} manages to avoid the $L^2$-index theorem and instead relies on the so called \emph{$\Gamma$-reduction} or \emph{Shafarevich map}, independently constructed by Campana and Koll\'ar in~\cite{CampGamma1} and~\cite{KollarShaf2} for compact K\"ahler manifolds and normal proper varieties respectively. Roughly said, it parameterizes maximal subvarieties of $X$ with finite fundamental group. Takayama uses it to construct an $L^2$-section of a certain line bundle on the universal cover of $X$. By the work of Gromov~\cite{GromovKaehler}, the existence of such a section  means that  if $\pi_1(X)$ is infinite, there are many sections of the corresponding line bundle on $X$.

Fortunately, the \emph{$\Gamma$-reduction} is also available for  orbifolds  due to Claudon~\cite{claudon}. But then we still have the problem that $Y_\sm$ is not compact. This is where the hypothesis of Theorem~\ref{thm:loctoglob} comes into play (\emph{and} thus the very reason why we cannot directly prove Theorem~\ref{thm:global} but have to carry out the induction). Consider a log resolution $f:X \to Y$ of the $n$-dimensional weakly Fano pair $(Y,D'+D'')$ with exceptional prime divisors $E_i$. Then a very small loop $\gamma_i$ around a general point $e_i$ of $E_i$ can be pushed forward to $Y_\sm$ and there it lies in the smooth locus of a very small neighbourhood of the image of $e_i$, which is a klt singularity. Thus by the hypothesis saying that the regional fundamental groups of klt singularities of dimension $n$ are finite, we know that $\gamma_i$ is of finite order $m_i$ in $f^{-1}(Y_\sm  \setminus \supp(D')) = X \setminus ( \bigcup_i E_i \cup \supp(f_*^{-1}D'))$. So the normal subgroup of $\pi_1(f^{-1}(Y_\sm  \setminus \supp(D')))$ generated by all $\gamma_i^{m_i}$ is trivial. Thus  $\pi_1(Y_\sm,D')$ is isomorphic to the orbifold fundamental group of the smooth compact orbifold $(X,f_*^{-1}D'+\sum (1-1/m_i)E_i)$. 

Then the remaining task in order to prove finiteness of the latter is to transfer the techniques of~\cite{Takayama} to the orbifold setting, which is done in Part~\ref{part:loctoglob} of the present paper.

\subsection*{Possible alternative ways of proof}

We consider two alternative approaches to prove Theorems~\ref{thm:regional} and~\ref{thm:global}.

As we mentioned before, simply connectedness of Fano manifolds can be proven by showing that they are rationally connected, from which follows that their fundamental group is finite. The notion of rational connectedness can also be formulated for orbifolds, and also here, from rational connectedness of a smooth orbifold $\orbi{X}=(X, \Delta)$  (in the sense of Campana) follows finiteness of the orbifold fundamental group of $\orbi{X}$~\cite[Cor.~12.25]{CampSecOrbi}. So rational connectedness of the orbifold $(X,f_*^{-1}D'+\sum (1-1/m_i)E_i)$ supported on a log resolution of a weakly Fano pair $(Y,D)$ would  yield an alternative proof of Theorem~\ref{thm:loctoglob}. But the definition of rational connectedness for orbifolds is subtle~\cite[D\'ef.~6.11,~Rem.~6.12]{CampSecOrbi} and we have no idea how to prove it for the orbifold $(X,f_*^{-1}D'+\sum (1-1/m_i)E_i)$.

A different approach - which would yield a direct induction-free proof of Theorem~\ref{thm:global} in any dimension - is the following. In Proposition~\ref{prop:fundfin}, we prove finiteness of the orbifold fundamental group of $(X,f_*^{-1}D'+\sum (1-1/m_i)E_i)$ \emph{for any choice of $m_i$}, supported on a log resolution $X$ of a weakly Fano pair $(Y,D'+D'')$. Instead of arguing with the induction hypothesis of finiteness of the regional fundamental group of klt singularities, one also could try to show that if $\pi_1(X \setminus \supp(f_*^{-1}D' + \sum E_i))/\langle\langle \gamma_1^{m_1},\ldots,\gamma_1^{m_1}\rangle\rangle$ is finite for every choice of $m_i$, then $\pi_1(X \setminus \supp(f_*^{-1}D' + \sum E_i))$ is already finite. It is known that there are finitely presented infinite groups with trivial profinite completion, but our situation is slightly different. 

By passing to some ramified finite cover of $(Y,D''+D'')$ (which is still weakly Fano), we can assume that $\hat{\pi}_1(Y_\sm,\left. D'\right|_{Y_\sm})$ is trivial, which means that $\hat{\pi}_1(Y_\sm,\left. D'\right|_{Y_\sm})$ has no proper normal subgroups of finite index. So the normal subgroup $\langle\langle \gamma_1^{m_1},\ldots,\gamma_1^{m_1}\rangle\rangle$ is the whole group $\pi_1(X \setminus \supp(f_*^{-1}D' + \sum E_i))$ for any choice of $m_i$. This seems to be a strong property and one could ask if infinite finitely presented groups of this kind even exist. 

But they do. Mark Sapir sent us an example: the Thompson group $T$, which is simple, finitely presented, and infinite. It is generated by three elements, and two of them have infinite order, so all normal subgroups generated by any choice of powers of these elements are the whole group $T$. 

We want to remark that  $\pi_1(X \setminus \supp(f_*^{-1}D' + \sum E_i))$ is a so-called \emph{quasiprojective group}, that is the fundamental group of a smooth quasiprojective variety. These groups satisfy some strong properties, see e.g.~\cite[Sec.~1.5]{JumpLoci}. We do not know if it is possible to show that the negation of the above property is among them.

\subsection*{Structure of the paper}
In Part~\ref{part:loctoglob} of the paper, we prove Theorem~\ref{thm:loctoglob}. While the proof itself happens in Section~\ref{sec:loctoglob}, in Sections~\ref{sec:orbifold} to~\ref{sec:maxsubsp}, we review the definitions of complex orbifolds and basic related notions - e.g. of orbibundles, orbisheaves, and orbimetrics - but transfer also more sophisticated concepts for complex manifolds to the orbifold case. 

In Part~\ref{part:globtoloc}, we prove Theorem~\ref{thm:globtoloc}. After shortly recalling the notion of Whitney stratifications in Section~\ref{sec:Whit}, we carefully analyze Lemmata 3.1 and 3.4 of~\cite{TianXu} in Section~\ref{sec:WorkTianXu}. In the last Section~\ref{sec:Le31mod}, we prove  Theorem~\ref{thm:globtoloc} by modifying Lemma 3.1 appropriately.

\subsection*{Acknowledgements}
The author wishes to thank Stefan Kebekus and Joaqu\'in Moraga for several discussions related to the topics of this paper. He is grateful to Chenyang Xu and Zhiyu Tian for suggesting the modification of~\cite[Le.~3.1]{TianXu}.

Thanks go also to Mark Sapir for providing the example of the Thompson group $T$, to Andreas Demleitner for a discussion on the same topic, and to Mirko Mauri for pointing out a mistake in the definition of $E_0$ in the proof of Theorem~\ref{thm:globtoloc}.

\tableofcontents

\part{Local to global}
\label{part:loctoglob}

\section{Complex orbifolds and orbimaps}
\label{sec:orbifold}

The definition of an \emph{orbifold} - under the name of \emph{$V$-manifold} - goes back to Satake~\cite{Satake} in the real and Baily~\cite{Baily} in the complex case. The notion was then rediscovered by Thurston~\cite{Thurston}, who finally gave it the name \emph{orbifold}. Complex orbifolds are \emph{locally} - but not necessarily globally - quotients of smooth complex manifolds, which makes them complex analytic spaces with an additional local quotient structure. We will use the following definition, see e.g.~\cite[Sec.~2.1]{Comar}. 

\begin{definition}	
\label{def:orbifold}
Let $X$ be a complex analytic space of dimension $n$. An \emph{orbifold chart} on $X$ is a tuple $(U',G,\varphi,U)$, where $U' \subseteq \CC^n$ is a connected open complex analytic subspace, $G$ is a finite subgroup of the automorphism group of $U'$, and $\varphi\colon U' \to U \subseteq X$ is a proper and finite holomorphic map to the open subspace $U \subseteq X$, such that $\varphi \circ g =\varphi$ for every $g \in G$. We require the induced map $U'/G \to U$ to be a homeomorphism. 

An \emph{injection} betweeen two orbifold charts $(U',G,\varphi,U)$ and $(V',H,\psi,V)$ is a holomorphic embedding $\lambda:U' \to V'$, such that $\psi \circ \lambda = \varphi$.

An \emph{orbifold atlas} on $X$ is a family $\orbi{U}=\{(U_i',G_i,\varphi_i,U_i)\}$, such that $X=\bigcup_{i} U_i$, and, for two charts $(U_i,G_i,\varphi_i,U_i)$ and $(U_j,G_j,\varphi_j,U_j)$, and any $x \in U_i \cap U_j$, there is a third chart $(U_k,G_k,\varphi_k,U_k)$, such that $x \in U_k \subseteq U_i \cap U_j$, and there are injections $\lambda_{ik}:U_k'\to U_i'$ and $\lambda_{jk}:U_k'\to U_j'$.

An atlas $\orbi{U}$ is a \emph{refinement} of another atlas $\orbi{V}$, if for every chart $V'$ of $\orbi{V}$, there is an injection $U' \to V'$ from a chart from $\orbi{U}$. An atlas $\orbi{U}$ is \emph{maximal}, if it has no nontrivial refinement. 

Let $\orbi{U}$ be a maximal orbifold atlas on $X$. Then we call the pair $\orbi{X}:=(X,\orbi{U})$ a (complex) \emph{orbifold}.
\end{definition}

We sometimes will call $U' \to U$ a \emph{local uniformization} and $G$ the \emph{local uniformizing group}. By the slice theorem, there is always an atlas consisting of \emph{linear charts} $(\CC^n, G, \varphi,U)$, such that $G$ is a subgroup of the unitary group $U(n)$~\cite[Rem.~(5)]{MoerPronk}.

The actions of the local uniformizing groups $G \subset \Aut(U')$ and the injections $\lambda:U'\to V'$ behave well with respect to each other. 
Consider for example a chart $(U',G,\varphi,U)$ and an element $g \in G$, then since $\varphi \circ g =\varphi$ holds, $G : U' \to U'$ is an injection. 
Moreover, the following holds~\cite[Rem.~(3),~Prop.~A.1]{MoerPronk}.

\begin{lemma}
\label{le:injgroup}
Let  $\orbi{X}:=(X,\orbi{U})$ be a complex orbifold and $(U',G,\varphi,U)$, $(V',H,\psi,V)$ two orbifold charts on $\orbi{X}$. Let $\lambda, \mu \colon U' \to V'$ be two injections between the charts. Then there is a unique $h \in H$, such that $h \circ \lambda=\mu$. 

In particular, for $g \in G$ the composition $\mu:=\lambda \circ g$ defines an injection $U' \to V'$. The unique $h \in H$ with $\lambda \circ g = h \circ \lambda$ is denoted by $\lambda(g)$. The induced map $\lambda:G \to H$ is an injective group homomorphism.
\end{lemma}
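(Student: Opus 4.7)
The plan is to exploit the fact that $\psi \circ \lambda = \varphi = \psi \circ \mu$ together with the description of $\psi$ as the quotient by $H$: pointwise, $\mu(u')$ must lie in the $H$-orbit of $\lambda(u')$, so for each $u'$ there is \emph{some} $h_{u'} \in H$ with $h_{u'}\cdot\lambda(u')=\mu(u')$. I would upgrade this pointwise condition to a single $h$ working everywhere via an irreducibility argument. Concretely, for each $h \in H$ set
$$
U'_h \,:=\, \{u' \in U' \,:\, h \cdot \lambda(u') = \mu(u')\}.
$$
As the coincidence locus of the two holomorphic maps $h \circ \lambda$ and $\mu$ from $U'$ to $V'$, each $U'_h$ is a closed analytic subset of $U'$, and the orbit observation gives $U' = \bigcup_{h \in H} U'_h$. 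Since $U'$ is a connected complex manifold, hence irreducible as an analytic space, this finite union forces some $U'_h$ to equal $U'$. That $h$ satisfies $h \circ \lambda = \mu$ identically.

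For uniqueness, I would use that $\lambda$ is a holomorphic embedding between two $n$-dimensional manifolds and therefore an open embedding, so $\lambda(U')$ is open in $V'$. If $h_1 \circ \lambda = h_2 \circ \lambda$, then $h_1^{-1} h_2$ fixes this open set pointwise, and the identity theorem together with the effectivity of the $H$-action (implicit in the convention that charts can be taken linear with $H \subset U(n)$) forces $h_1 = h_2$.

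The ``in particular'' part then follows formally. For $g \in G$, the composition $\lambda \circ g$ is a holomorphic embedding, and $\psi \circ (\lambda \circ g) = \varphi \circ g = \varphi$, so $\lambda \circ g$ is an injection between the two charts. The first part produces a unique $\lambda(g) \in H$ with $\lambda \circ g = \lambda(g) \circ \lambda$. Uniqueness makes $g \mapsto \lambda(g)$ a homomorphism via
$$
\lambda(g g') \circ \lambda \,=\, \lambda \circ (g g') \,=\, (\lambda \circ g) \circ g' \,=\, \lambda(g) \circ \lambda \circ g' \,=\, \bigl(\lambda(g)\,\lambda(g')\bigr) \circ \lambda,
$$
and injectivity is immediate: if $\lambda(g) = e$ then $\lambda \circ g = \lambda$, whence $g = \id$ by injectivity of $\lambda$. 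The only substantive ingredient throughout is the irreducibility step in the existence argument; the main conceptual obstacle one must be honest about is the tacit effectivity of the $H$-action on $V'$, without which ``unique $h \in H$'' would have to be relaxed to uniqueness modulo the (trivial, by convention) kernel of the action.
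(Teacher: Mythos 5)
Your proof is correct. The paper does not actually give a proof of this lemma; it is stated with a citation to \cite{MoerPronk}, so there is no in-paper argument to compare against. Your route is a clean, self-contained complex-analytic proof: the existence step rests on $\psi\circ\lambda = \varphi = \psi\circ\mu$ together with the requirement in Definition~\ref{def:orbifold} that $V'/H\to V$ be a homeomorphism (so fibers of $\psi$ are exactly $H$-orbits), which yields the pointwise statement; then the finite decomposition $U' = \bigcup_{h\in H} U'_h$ into closed analytic coincidence loci, plus irreducibility of the connected open set $U'\subseteq\CC^n$ (equivalently, a Baire-category argument, since a proper analytic subset of a manifold is nowhere dense), promotes this to a single global $h$. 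The uniqueness step is sound: $\lambda$ is a holomorphic embedding between equidimensional manifolds, hence open, so $h_1^{-1}h_2$ fixes a nonempty open subset of the connected $V'$, and the identity theorem forces $h_1^{-1}h_2 = \id_{V'}$; since Definition~\ref{def:orbifold} takes $H\subset\Aut(V')$, the action is faithful by fiat, so $h_1 = h_2$ in $H$ — your worry about effectivity is resolved by the definition itself. The ``in particular'' part then follows exactly as you write. One comparison worth noting: the Moerdijk--Pronk argument is carried out in the smooth category, where the identity theorem is unavailable, so they must use a different local-to-global mechanism for existence; your appeal to analyticity and irreducibility is a genuine simplification afforded by the complex setting in which the paper works.
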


The following is a direct consequence of Lemma~\ref{le:injgroup}, which we haven't found in the literature.

\begin{corollary}
Let  $\orbi{X}:=(X,\orbi{U})$ be a complex orbifold and $(U',G,\varphi,U)$ and orbifold chart on $\orbi{X}$. Let $\lambda, \mu \colon U' \to U'$ an injection from $(U',G,\varphi,U)$ to itself. Then there is a $g \in G$, such that $\lambda=g$.
\end{corollary}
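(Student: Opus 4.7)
The plan is to reduce the statement to a one-line application of Lemma~\ref{le:injgroup}, by supplying the identity of $U'$ as an auxiliary second injection. First I would note that $\id_{U'}\colon U' \to U'$ is itself an injection from the chart $(U',G,\varphi,U)$ to itself: it is trivially a holomorphic embedding, and $\varphi \circ \id_{U'} = \varphi$ is immediate from the definition. Hence, given the injection $\lambda\colon U' \to U'$ in the hypothesis, I would invoke Lemma~\ref{le:injgroup} with $(V',H,\psi,V) := (U',G,\varphi,U)$ and with the two injections $\lambda$ and $\id_{U'}$.

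The lemma then produces a unique $h \in G$ with $h \circ \lambda = \id_{U'}$, which forces $\lambda = h^{-1}$. Setting $g := h^{-1} \in G$ gives the desired equality $\lambda = g$. There is essentially no obstacle, since the whole argument is a single invocation of Lemma~\ref{le:injgroup} once one observes that $\id_{U'}$ qualifies as a self-injection of the chart. I would only briefly remark that the statement of the corollary as printed contains a small typographical redundancy (two injections $\lambda$ and $\mu$ are named, but only one appears in the conclusion); the proof above handles a single self-injection, and the statement for a pair would simply be obtained by applying the same argument to each.
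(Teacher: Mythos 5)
Your proof is correct and matches the paper's intent: the corollary is stated there simply as a ``direct consequence of Lemma~\ref{le:injgroup}'' with no details, and your argument—observing that $\id_{U'}$ is a self-injection of the chart, applying the lemma to the pair $(\lambda,\id_{U'})$ to obtain a unique $h\in G$ with $h\circ\lambda=\id_{U'}$, and concluding $\lambda=h^{-1}\in G$—is exactly the step the paper leaves implicit. You are also right that the appearance of $\mu$ in the statement is a typographical leftover; only a single self-injection is needed.
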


Let $(U',G,\varphi,U)$ be an orbifold chart around $x \in U \subseteq X$, and $p \in \varphi^{-1}(x)$. Up to conjugacy, the isotropy subgroup $G_p$ is determined by $x$. Moreover, according to Lemma~\ref{le:injgroup}, if $(V',H,\psi,V)$ is another chart around $x$ and $q \in \psi^{-1}(x)$, then $G_p \cong H_q$, so the following is well defined up to isomorphy~\cite[Def.~4.1.2]{sasakian}. 

\begin{definition}
Let $\orbi{X}=(X,\orbi{U})$ be an orbifold and $x \in X$. For an orbifold chart $(U',G,\varphi,U)$ around $x$ and $p \in \varphi^{-1}(x)$, we call $G_x:=G_p$ the \emph{isotropy group of $x$}.
We call those $x \in X$ with $G_x=\{ e_G\}$ \emph{orbifold regular points}, and all points with $G_x\neq \{ e_G\}$ \emph{orbifold singular points}. 
\end{definition}

Note that the singular points (in the usual sense) of the complex analytic space $X$ are a subset of the orbifold singular points of $\orbi{X}=(X,\orbi{U})$. In particular, an orbifold singular point $x$ is a smooth point of $X$ if and only if $G_x$ is a \emph{reflection group} (for some and in consequence for all local uniformizations around $x$). A direct consequence of the well-definedness of the isotropy group of points of $\orbi{X}$ is the following stricter version of the already mentioned~\cite[Rem.~(5)]{MoerPronk}, which again we haven't found in the literature.

\begin{lemma}
Let  $\orbi{X}:=(X,\orbi{U})$ be a complex orbifold and $x \in X$ with isotropy group $G_x$. Then there is an orbifold chart $(\CC^n,G_x,\phi,U)$ around $x$, such that $\phi^{-1}(x)=0 \in \CC^n$ and $G_x$ acts as a subgroup of $U(n)$.
\end{lemma}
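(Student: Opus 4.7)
The plan is to exhibit the desired chart by shrinking a linear chart around a suitable preimage of $x$ and translating so that this preimage becomes the origin.

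First, invoke the slice-theorem remark~\cite[Rem.~(5)]{MoerPronk} already cited above to obtain an orbifold chart $(\CC^n,G,\varphi,U_0)\in\orbi{U}$ with $x\in U_0$ and $G\subset U(n)$ acting linearly and unitarily. Pick any $p\in\varphi^{-1}(x)$. Then the stabilizer $G_p\subset G$ is a concrete representative of the isotropy class $G_x$, so by Lemma~\ref{le:injgroup} we may identify $G_x=G_p$ once and for all.

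Second, shrink around $p$. Since the orbit $G\cdot p$ is finite,
$$
\epsilon:=\frac{1}{3}\min_{g\in G\setminus G_p}\|g(p)-p\|>0.
$$
Let $B:=B_\epsilon(p)\subset\CC^n$. Each $g\in G_p$ is unitary and fixes $p$, hence preserves $B$; each $g\in G\setminus G_p$ is an isometry with $g(B)=B_\epsilon(g(p))$, which is disjoint from $B$ by the choice of $\epsilon$. Consequently $\varphi^{-1}(\varphi(y))\cap B=G_p\cdot y$ for every $y\in B$, so $\varphi|_B\colon B\to \varphi(B)$ is a proper finite $G_p$-invariant holomorphic map inducing a homeomorphism $B/G_p\to\varphi(B)$.

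Third, translate: set $T(z)=z-p$, so that $T(B)=B_\epsilon(0)$ is an open $G_p$-invariant neighborhood of the origin on which $G_p$ acts by the restriction of its linear unitary action, since $g(z)-p=g(z-p)$ for $g\in G_p$. Putting $\phi:=\varphi|_B\circ T^{-1}$ and $U:=\varphi(B)$, the tuple $(T(B),G_p,\phi,U)$ is an orbifold chart with $\phi^{-1}(x)=T(p)=0$ and $G_x=G_p\subset U(n)$ acting linearly, in line with the ``linear chart'' notation $(\CC^n,G_x,\phi,U)$ already in use in the paper. To see that this chart belongs to the maximal atlas $\orbi{U}$, observe that $T^{-1}\colon T(B)\hookrightarrow\CC^n$ is a $G_p$-equivariant holomorphic embedding satisfying $\varphi\circ T^{-1}=\phi$, and so is an injection from the new chart into $(\CC^n,G,\varphi,U_0)$. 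Hence the new chart is compatible with $\orbi{U}$, and maximality forces it into $\orbi{U}$. The main subtlety is this last compatibility/maximality check; the rest is a direct application of the slice-theorem picture combined with Lemma~\ref{le:injgroup}.
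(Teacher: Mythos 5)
Your proof is correct and supplies precisely the argument the paper leaves implicit: restrict a linear unitary slice chart near $p\in\varphi^{-1}(x)$ to a small $G_p$-invariant ball so that only $G_p\cong G_x$ acts, translate $p$ to the origin, and verify via the injection $T^{-1}$ that the result is a chart compatible with the maximal atlas. Since the paper states the lemma without a written proof (as a direct consequence of the slice-chart remark and well-definedness of $G_x$), there is nothing to contrast against; your choice $\epsilon=\tfrac13\min_{g\notin G_p}\|g(p)-p\|$ is exactly the margin needed so that $\varphi|_B\colon B\to\varphi(B)$ remains proper.
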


Campana in~\cite{CampFirstOrbi} introduced another notion of orbifold for pairs $(X,\Delta)$, where $\Delta$ is a certain divisor on a complex analytic space $X$. We will see that under certain conditions - which we will encounter in our setting -, his notion is equivalent to that of a complex orbifold we gave in Definition~\ref{def:orbifold}. In order to distinguish between the two notions, we will call such pairs $(X,\Delta)$ \emph{geometric orbifolds}, following~\cite{CampSchier}.

\begin{definition}
A \textit{geometric orbifold} is a pair $(X,\Delta)$, where $X$ is a complex analytic space and $\Delta$  a divisor of the form
$$
\sum_{i \in I} \left(1-\frac{1}{m_i}\right) \Delta_i,
$$
where we assume that the $m_i$ are integers greater than zero and the $\delta_i$ are prime divisors.
\end{definition}

\begin{definition}
We say that the geometric orbifold $(X,\Delta)$ is \textit{smooth}, if $X$ is a smooth complex manifold and $\supp(\Delta)$ is a simple normal crossing divisor.
\end{definition}

\begin{remark}
\label{rem:geomorb}
A smooth geometric $n$-dimensional orbifold $(X,\Delta)$ always can be represented by a complex  orbifold in the sense of Definition~\ref{def:orbifold}. Consider a local chart $\CC^n  \to V \subset X$ of $X$ as an analytic space. Then after suitable adjustment, in this chart, $\Delta$ is given by
$$
\prod_{i=1}^{k} x_i^{1-1/m_i}.
$$
So we have a \emph{local uniformization}
\begin{align*}
\CC^n &\to \CC^n \\
(x_1,\ldots,x_n) &\mapsto (x_1^{m_i},\ldots,x_k^{m_k},x_{k+1},\ldots,x_n),
\end{align*}
which is nothing but the quotient of the action of $\ZZ/m_1\ZZ \times \ldots \times \ZZ/m_k\ZZ$ acting diagonally on $\CC^n$ by roots of unity. If a local analytic chart of $X$ does not intersect $\Delta$, we can take it as orbifold chart. The compatibility of these charts is straightforward. We call this the \emph{canonical orbifold structure} of a smooth geometric orbifold. 
\end{remark}

The local uniformizing subgroups of the canonical orbifold structure of a smooth geometric orbifold are reflection groups. This can be seen from the fact that the analytic space $X$ is smooth or directly from the explicit orbifold charts in Remark~\ref{rem:geomorb}. The analogy between geometric orbifolds $(X,\Delta)$ and complex orbifolds $\orbi{X}$ actually goes further~\cite[Sec.~2]{BoyGalKoll}, but we are only interested in the particular case of \emph{smooth geometric orbifolds} here.

We close this section with the definition of \emph{orbimaps}. The original definitions from~\cite{Satake, Baily} do not in general induce morphisms of \emph{orbibundles} and \emph{orbisheaves} - which we will define in Sections~\ref{sec:orbibundles} and~\ref{sec:orbisheaves} respectively. This has been realized in~\cite{MoerPronk} and additional compatibility criteria have been introduced to remedy this problem. This led to the  equivalent notions of 'strong'~\cite{MoerPronk} and 'good'~\cite{ChenRuan} orbifold maps. Since we will work with orbibundles and -sheaves, for us the definition of a \emph{holomorphic orbimap} includes the additional compatibility criteria. That is to say, our maps are always 'strong'/'good', compare~\cite[Def.~4.1.8]{sasakian}.

\begin{definition}
\label{def:orbimap}
Let $\orbi{X}=(X,\orbi{U})$ and $\orbi{Y}=(Y,\orbi{V})$ be two complex orbifolds. A map $f:X \to Y$ is called a \emph{holomorphic orbimap} if the following hold:
\begin{enumerate}
\item For any $x \in X$, there are  orbifold charts $(U_i',G_i,\varphi_i,U_i)$ of $\orbi{X}$ around $x$ and $(V_i',H_i,\psi_i,V_i)$ of $\orbi{Y}$ around $f(x)$, such that 
\begin{enumerate}
\item $f(U) \subseteq V$ and
\item there is a holomorphic map $f'_i:U_i' \to V_i'$ satisfying $\psi \circ f'_i = f \circ \varphi$.
\end{enumerate} 
\item For any pair of charts $(U_i',G_i,\varphi_i,U_i)$ and $(U_j',G_j,\varphi_j,U_j)$  on $\orbi{X}$, any corresponding pair $(V_i',H_i,\psi_i,V_i)$ and $(V_j',H_j,\psi_j,V_j)$ of charts on $\orbi{Y}$ in the sense of item (1), and any injection $\lambda_{ji}\colon U_i' \to U_j'$, there is an injection $\mu_{ji}\colon V_i' \to V_j'$, such that 
\begin{enumerate}
\item $f'_i \circ \lambda_{ji} = \mu_{ji} \circ f'_j$ and
\item if $(U'_k,G_k,\varphi_k,U_k)$ is another chart on $\orbi{X}$ with an injection $\lambda_{ki}=\lambda_{kj} \circ \lambda_{ji}\colon U_i' \to U_k'$,  and $(V'_k,H_k,\psi_k,V_k)$ the corresponding chart on $\orbi{Y}$, then $\mu_{kj} \circ \mu_{ji} = \mu_{ki}$.
\end{enumerate}
\end{enumerate}
\end{definition}

\begin{remark}
In the setting of Definition~\ref{def:orbimap}, consider an injection $\lambda_{ji}\colon U_i' \to U_j'$ and two different injections $\mu_{ji}\colon V_i' \to V_j'$ and $\mu^*_{ji}\colon V_i' \to V_j'$ both meeting the requirements of Item (2), (a).  
Then according to Lemma~\ref{le:injgroup}, there is a \emph{unique} $h \in H_j$, such that $\mu_{ji}= h \circ \mu_{ji}^*$. So the $\mu_{ji}$ are determined only up to multiplication with elements of $H_j$. 

Now let $i=j$ and consider an injection $\lambda_{ji}=g \colon U_i' \to U_i'$ given by an element $g \in G_i$. \emph{In contrary} to the second statement of Lemma~\ref{le:injgroup}, now there is not necessarily a \emph{unique} $h \in H_i$, such that $f'_i \circ g = h \circ f'_i$.
But if we fix an assignment $\lambda_{ji} \mapsto \mu_{ji}$ between injections on $\orbi{X}$ and $\orbi{Y}$ fulfilling the requirements of Definition~\ref{def:orbimap}, then for each $i$, we get \emph{group homomorphisms} $G_i \to H_i$ for all $i$~\cite[Sec.~4.4]{OrbiGromovWitten}.
\end{remark}

A system of charts on $\orbi{X}$ and $\orbi{Y}$ fulfilling Item (1) of Definition~\ref{def:orbimap} together with an assignment $\lambda_{ji} \mapsto \mu_{ji}$ between injections of such charts is called a \emph{compatible system} in~\cite[Sec.~4.4]{OrbiGromovWitten}. If a  map between orbifolds allows a compatible system, it is called 'good'~\cite[Def.~4.4.1]{OrbiGromovWitten}. The problem is that one map may allow different compatible systems, as the following easy example shows~\cite[Ex.~4.4.2b]{OrbiGromovWitten}.

\begin{example}
\label{ex:diffcompsys}
Consider $\orbi{X}=(\CC,\orbi{U})$ with $\orbi{U}=\{(\CC,\ZZ/2\ZZ,\{x \mapsto x^2\},\CC)\}$ and $\orbi{Y}=(\CC^2,\orbi{V})$ with $\orbi{V}=\{(\CC^2,(\ZZ/2\ZZ)^2,\{(x,y) \mapsto (x^2,y^2)\},\CC^2)\}$. Both $\orbi{X}$ are smooth orbifolds. Consider the map $f: x \mapsto (x,0)$ between the underlying spaces. Then it is clear that the two possible lifts of $f$ in the orbifold charts are $x \mapsto (x,0)$ and $x \mapsto (-x,0)$. 

But there are also essentially different compatible systems. For $g=1 \in \ZZ/2\ZZ$, acting on $\CC$ by $x \mapsto -x$, it is possible to choose $h=(1,0) \in (\ZZ/2\ZZ)^2$, acting by $(x,y) \mapsto (-x,y)$, or $h'=(1,1) \in (\ZZ/2\ZZ)^2$, acting by $(x,y) \mapsto (-x,-y)$. Both choices meet the requirements of Definition~\ref{def:orbimap}. 
\end{example}

As~\cite[Le.~4.4.3]{OrbiGromovWitten} states, for any compatible system, there is a unique pullback of orbibundles. But for different compatible systems as in Example~\ref{ex:diffcompsys}, these pullbacks may differ. 

Nevertheless, the only holomorphic orbimaps we encounter are \emph{orbifold (universal) covers}. These always have a unique compatible system, since they are locally trivial in the orbifold sense.

\section{Orbibundles}
\label{sec:orbibundles}

In this section, we define orbifold vector bundles or \emph{orbibundles} as a reasonable generalization of (complex) vector bundles over (complex) manifolds. The probably most important notion is that of the \emph{orbifold tangent space} $T\orbi{X}$ and related constructions.

\begin{definition}
\label{def:orbibundle}
Let $\orbi{X}=(X,\orbi{U})$ be a complex orbifold. An \emph{orbifold vector bundle} or \emph{orbibundle} of rank $k$ over $\orbi{X}$ is a collection of vector bundles $\pi'_i \colon E'_{i} \to U_i'$ with fiber $\CC^k$ for each orbifold chart $(U_i',G_i,\varphi_i,U_i)$ of $\orbi{X}$, together with an action of $G_i$ on $E'_{i}$ by (ordinary) vector bundle maps, such that:
\begin{enumerate}
\item Each $\pi'_i$ is $G_i$-invariant, so that the following diagram is commutative for any $g \in G_i$:
$$
 \xymatrix{ 
E'_{i} \ar[r]^{g} \ar[d]^{\pi'_i}
&
E'_i \ar[d]^{\pi'_i} \\ 
U_i' \ar[r]^{g} 
&
U_i'.
}
$$
\item For any injection $\lambda_{ji}:U_i' \to U_j'$ of charts on $\orbi{X}$, there is a bundle isomorphism 
$\lambda_{ji}':E_i' \to \left.E_j'\right|_{\im(\lambda_{ji})}$, such that $\lambda_{ji}' \circ g = \lambda_{ji}(g) \circ \lambda_{ji}'$, where by $\lambda_{ji}: G_i \to G_j$ we denote the injective group homomorphism from Lemma~\ref{le:injgroup} as well.
\item For two injections $\lambda_{ji}:U_i' \to U_j'$ and $\lambda_{kj}:U_j' \to U_k'$, we have $(\lambda_{kj} \circ \lambda_{ji})'= \lambda_{kj}' \circ \lambda_{ji}'$.
\end{enumerate}

\end{definition}

\begin{remark}
\label{rem:loctriv}
The total space $E$ of an orbibundle is obtained from the local bundles $E'_i$ in the following way~\cite[Sec.~2.2]{Comar}. Choosing small enough orbifold charts on $\orbi{X}$, we can assume that $E_i' \cong U_i' \times \CC^k$ and the action of $G_i$ on $U_i' \times \CC^k$ is diagonal and acting as a subgroup of $\GL(k)$ on the second factor. Then since $\pi_i'$ is equivariant, setting $E_i:=E_i'/G_i$, we have a unique 'projection' $\pi_i$, so that the following diagram commutes:
$$
 \xymatrix{ 
E'_{i} \ar[r] \ar[d]^{\pi'_i}
&
E_i \ar[d]^{\pi_i} \\ 
U_i' \ar[r]^{\varphi_i} 
&
U_i.
}
$$
Now we can glue the sets $E_i$ in the following way, stemming from the gluing condition on $\orbi{X}$:
let $x \in U_i \cap U_j \neq \emptyset$. Then according to Definition~\ref{def:orbifold} there is a chart $x \in U_k$ with injections $\lambda_{ik}:U'_k \to U_i'$ and $\lambda_{jk}:U'_k \to U_j'$, which by Definition~\ref{def:orbibundle} (2) induce bundle isomorphisms $\lambda_{jk}':E_k' \to \left.E_j'\right|_{\im(\lambda_{jk})}$ and $\lambda_{ik}':E_k' \to \left.E_i'\right|_{\im(\lambda_{ik})}$. Gluing $E_i$ and $E_j$ acccording to this data results in an orbifold $\orbi{E}$ with underlying space $E$ and an orbimap $\pi \colon \orbi{E} \to \orbi{X}$, which is locally given by the equivariant projections $\pi_i' \colon E_i' \to U_i'$~\cite[Sec.~2.2]{Comar}.
\end{remark}

\begin{example}
\label{ex:trivline}
Probably the easiest but still important example of an orbibundle is the \emph{trivial line bundle}, given by trivial line bundles $E_i':=U_i' \times \CC$ on each chart $U_i'$ \emph{together} with a \emph{trivial} action of $G_i$ on the second factor. Then clearly $E_i \cong U_i \times \CC$ and the total space $\orbi{E}$ is just $\orbi{X} \times \CC$. 
\end{example}

\begin{example}
Another very important example is that of the \emph{tangent orbibundle} $T\orbi{X}$. It can be constructed in the following natural way~\cite[Ex.~4.2.10]{sasakian}. On a chart $U_i'$, take the tangent bundle $TU_i'\cong U_i' \times \CC^n$ and for any injection $\lambda_{ji}:U_i' \to U_j'$ of charts on $\orbi{X}$, the bundle isomorphism 
$\lambda_{ji}':E_i' \to \left.E_j'\right|_{\im(\lambda_{ji})}$ is given by $\lambda_{ji}$ on the first factor and the \emph{Jacobian} $\Jac (\lambda_{ji})$ on the second one. This construction obviously generalizes to the \emph{cotangent orbibundle} $T^*\orbi{X}$, (symmetric, antisymmetric) tensor orbibundles et cetera~\cite{OrbifoldSurvey}.
\end{example}

\begin{remark}
Locally  around $x \in X$, the fiber $\pi^{-1}(x) \subseteq T\orbi{X}$ is not isomorphic to $\CC^n$, but is holomorphic to a small neighbourhood of $x \in X$, because  in a local chart, the actions of $g \in G_i$ on $U_i'$ and of $\Jac(g)$ on $T_{\phi^{-1}(x)}U_i'$ are essentially the same.

On the other hand, even if $(X,\Delta)$ is a smooth geometric orbifold with canonical orbifold structure $\orbi{X}$, the underlying space of $T\orbi{X}$  is \emph{not necessarily} the ordinary tangent space $TX$. 
\end{remark}

Now having defined orbibundles, we have to ask ourselves what is a reasonable definition of (holomorphic) \emph{sections} of these. Obviously for an orbibundle $\pi: \orbi{E} \to \orbi{X}$, a section of $\orbi{E}$ should be a holomorphic orbimap $s:\orbi{X} \to \orbi{E}$ satisfying $\pi \circ s = \id_X$. But what does this mean on a local chart $\pi'_i\colon E'_i \to U'_i$? As we have an action of $G_i$ on $U_i'$ and $E_i'$, $s$ locally corresponds to an \emph{equivariant}  holomorphic section $s_i:U_i' \to E_i'$, meaning $g \circ s_i = s_i \circ g$ for any $g \in G_i$. Of course the local sections must be compatible with injections as well, so that we arrive at the following definition~\cite[Def.~4.2.9]{sasakian}.

\begin{definition}
\label{def:orbsecdiff}
Let $\pi: \orbi{E} \to \orbi{X}$ be an orbibundle. Then a holomorphic \emph{section} of $\orbi{E}$ is given by any of the two equivalent definitions:
\begin{enumerate}
\item $s:\orbi{X} \to \orbi{E}$ is a holomorphic orbimap satisfying $\pi \circ s = \id_\orbi{X}$.
\item A collection of holomorphic sections $s_i:U_i' \to E_i'$ of the local bundles over charts of $\orbi{X}$, such that for any injection $\lambda_{ji}\colon U_i \to U_j$, the following diagram commutes:
$$
 \xymatrix@C=3em{ 
E'_{i} \ar[r]^{\lambda_{ji}'} 
&
\left.E'_j\right|_{\im(\lambda_{ji})}  \\ 
U_i' \ar[r]^{\lambda_{ji}} \ar[u]_{s_i}
&
\im(\lambda_{ji}) \ar[u]_{\left.s_j\right|_{\im(\lambda_{ji})}}.
}
$$
\end{enumerate}
\end{definition}

\begin{remark}
\emph{Equivariance} of the local sections $s_i$ obviously is the right requirement, otherwise they would not glue to a global section $s:\orbi{X} \to \orbi{E}$. When (locally) the action of $G_i$ on the fiber is \emph{trivial}, then of course \emph{equivariance} means nothing else than \emph{invariance}, as it is the case for the trivial line bundle from Example~\ref{ex:trivline}. Sections of this bundle clearly are in a one-to-one-correspondence with holomorphic orbimaps from $\orbi{X}$ to $\CC$ endowed with the trivial orbifold structure. So they are a good candidate for a \emph{structure orbisheaf} on $\orbi{X}$, see Section~\ref{sec:orbisheaves}.
\end{remark}

In order to get \emph{coherent sheaves} on the underlying space $X$, nonetheless, we have to deal with \emph{invariant} sections of line bundles $E_i' \to U_i'$ or sheaves $\mathcal{F}'$ on the local unifomizations $U_i'$. 

The other way round works quite as well. If the underlying space $X$ of a complex orbifold is a manifold, then line bundles and Weil divisors coincide and we can pull them back to the local uniformizations, so they give orbibundles on $\orbi{X}$. Now for example, we can ask ourselves which divisor on $X$ gives the canonical orbibundle $K_{\orbi{X}}$.

\begin{example}
\label{ex:kandiv}
To answer this question, we just have to pull back a top differential form in a local uniformization
\begin{align*}
\varphi\colon \CC^n &\to \CC^n \\
(x_1,\ldots,x_n) &\mapsto (x_1^{m_i},\ldots,x_k^{m_k},x_{k+1},\ldots,x_n).
\end{align*}
We clearly have
$$
\varphi^* (dz_1 \wedge \ldots \wedge dz_n) = \prod_{i=1}^k m_i x_i^{m_i-1} dx_1 \wedge \ldots \wedge dx_n. 
$$
Thus we have to multiply with functions that along a ramification divisor $x_i=0$ are allowed to have poles of order at most $m_i-1$. On $X$, this means we have to multiply with functions that on the branch divisors $z_i=0$ have poles of order at most $\frac{m_i-1}{m_i}$. So $K_\orbi{X}$ locally is the pullback of the \emph{$\QQ$-divisor} $K_X + \Delta$~\cite[Prop.~4.4.15]{sasakian}.
\end{example}

\section{Orbisheaves}
\label{sec:orbisheaves}

We first introduce the notion of an orbisheaf following~\cite{MoerPronk} and~\cite[Def.~4.2.1]{sasakian}.

\begin{definition}
Let $\orbi{X}=(X,\orbi{U})$ be a complex orbifold. An \emph{orbisheaf} $\mathcal{F}$ on $\orbi{X}$ consists of a sheaf $\orbi{F}_i'$ over $U_i'$ for each orbifold chart $(U_i',G_i,\varphi_i,U_i)$ of $\orbi{X}$, such that for each injection $\lambda_{ji}:U_i' \to U_j'$ there is an isomorphism of sheaves $\orbi{F}(\lambda_{ij}:\orbi{F}_i' \to \lambda_{ji}^* \orbi{F}_j'$, which is functorial.
\end{definition}

We are mainly interested in sheaves of modules over a reasonable \emph{structure sheaf}, so first, we have to define such structure sheaf, see~\cite[Def.~4.2.2]{sasakian}.

\begin{definition}
The \emph{structure orbisheaf} $\orbi{O}_{\orbi{X}}$ is the orbisheaf consisting of structure sheaves $\orbi{O}_{U_i'}$ on each orbifold chart $U_i'$. On a complex orbifold $\orbi{X}$, by $\orbi{O}_{\orbi{X}}$ we will always denote the structure sheaf of holomorphic functions.
\end{definition}

It is clear that this definition neither will give us a sheaf on the underlying space $X$ nor it coincides with the holomorphic sections in the sense of Definition~\ref{def:orbsecdiff} of the trivial orbibundle, see Remark~\ref{rem:loctriv}. We have to use local $G_i$-invariant sections of such sheaves and glue them together over $X$~\cite[Lemma~4.2.4]{sasakian}. We will often work with these invariant sections of orbisheaves (or \emph{invariant} local sections of orbibundles, which do \emph{not in general} coincide with the \emph{equivariant} sections from Definition~\ref{def:orbsecdiff}). We will always denote sheaves on $X$ coming from invariant local sections of orbisheaves $\mathcal{F}$ by $\orbi{F}_X$. In particular $\left(\orbi{O}_\orbi{X}\right)_X \cong \orbi{O}_X$ holds for the structure sheaves.

Now recall that the functor $V \to V^G$ taking a vector space with an action of a \emph{finite} group $G$ to its $G$-invariant subspace is exact. This means in particular that for a \emph{coherent} orbisheaf $\orbi{F}$ of $\orbi{O}_\orbi{X}$-modules, the sheaf $\orbi{F}_X$ made up of (locally) $G_i$-invariant sections is a \emph{coherent} sheaf of $\orbi{O}_X$ modules. As exact sequences are preserved, it also makes sense to formulate orbisheaf cohomology, orbifold Dolbeault cohomology et cetera, see Section~\ref{sec:L2}.

\section{Orbimetrics}
\label{sec:orbimetrics}

In this section, we consider metrics on orbifolds, or \emph{orbimetrics}. By the preceding considerations, it is clear that these should be (invariant) metrics on the local uniformizations $U_i'$ of an orbifold $\orbi{X}=(X,\orbi{U})$.

\begin{definition}
Let $\orbi{X}=(X,\orbi{U})$ be a complex orbifold and $\orbi{E} \to \orbi{X}$ an orbibundle. A \emph{Hermitian orbimetric} on $\orbi{E}$ is a collection of Hermitian metrics $h_i'$ on the local uniformizations $E_i' \to U_i'$, such that all $h_i'$ are $G_i$-invariant and  all injections are Hermitian  isometries.
\end{definition}

We similarly can define Riemannian orbimetrics~\cite[Def.~4.2.11]{sasakian}, K\"ahler orbiforms, K\"ahler orbifolds~\cite[Def.~5.4.7]{MaMarinescu}, positive line orbibundles~\cite[Prop.~5.4.8]{MaMarinescu}, Hodge orbifolds et cetera - all as $G_i$-invariant objects on the local uniformizations $U_i'$ by the usual definitions.

On the other hand, if the underlying space $X$ of a complex orbifold $\orbi{X}=(X,\orbi{U})$ is smooth, then (usual) divisors or line bundles on $X$ can be pulled back to the local uniformizations and thus define orbibundles as we have seen in Example~\ref{ex:kandiv} in the case of the canonical divisor.

Now when the underlying space $X$ is even a K\"ahler manifold $(X,\omega)$ with K\"ahler form $\omega$ (in the usual sense), then Claudon~\cite[Prop.~2.1]{claudon} has constructed a K\"ahler \emph{orbiform} $\omega'$ out of $\omega$ in the following way.

\begin{example}
\label{ex:orbmet}
Let $\orbi{X}=(X,\Delta=\sum_{j=1}^m (1-1/m_j)\Delta_j)$ be a complex orbifold, such that $(X,\omega)$ is a K\"ahler manifold for some $(1,1)$-form $\omega$ on $X$. In a local uniformization $\varphi \colon \CC^n \cong U_i' \to U_i \cong \CC^n$, we can assume that on $U_i$ with coordinates $z_1,\ldots,z_n$, the K\"ahler form $\omega$ is given by
$$
\sum_{j=1}^n i \del \delbar \left|z_j \right|^{2} = i \sum_{j=1}^n   \dif z_j \wedge \dif \zbar_j.
$$
Analogous to Example~\ref{ex:kandiv}, the pullback under $\varphi$ is
$$
\varphi^*(\omega) = i \sum_{j=1}^n m_j^2\left|x_j\right|^{2(m_j-1)}  \dif x_j \wedge \dif \xbar_j,
$$
where $m_j=1$ if $x_j=0$ is not the restriction of a divisor $\Delta_j$.  
This form is clearly degenerate at the origin if  $\Delta \cap U_i \neq \emptyset$. In particular, it is no K\"ahler orbiform.

Now consider the global $(1,1)$-form $\omega_\Delta$ with values in $\orbi{O}_X(2\Delta)$ given by
$$
\omega_\Delta = \sum_{j=1}^k i \del \delbar \left|s_j \right|^{2/m_j}
$$
where $s_j \in \orbi{O}_X(\Delta_j)$ is a section defining $\Delta_j$. Locally we can assume that $s_j$ is just given by $z_j$, so the pullback by $\varphi$ is
$$
\varphi^*(\omega_\Delta) = \sum_{j=1}^k \dif x_j \wedge \dif \xbar_j.
$$
In general, $k < n$, so this form is denenerate as well.
Now combine these two to a form $\omega'=\omega+\omega_\Delta$. Then on the one hand, $\omega'$ is smooth on $X \setminus \supp(\Delta)$,  and for $c \in \RR_{>0}$ small enough, $\omega' \geq c\omega$ as currents. On the other hand, the pullback
$$
\varphi^*(\omega') = i \sum_{j=1}^k (1+m_j^2\left|x_j\right|^{2(m_j-1)}) \dif x_j \wedge \dif \xbar_j + \sum_{j=k+1}^m  \dif x_j \wedge \dif \xbar_j
$$
is a true K\"ahler form in the local uniformization $U_i'\cong \CC^n$.
\end{example}

What we need here is a stronger result. Consider the following situation: $\orbi{X}=(X,\Delta)$ is a complex orbifold with $X$ a manifold. Let $L$ be an ample line bundle on the manifold $X$. Then according to~\cite[Thm.~4.3.14]{sasakian} and the preceding paragraph therein, the \emph{first orbifold Chern class} of $L$ is just the usual first Chern class with respect to $X$. Thus $L$ (or the pullback to local uniformizations) defines an ample (or positive) line orbibundle. 

Now given a Hermitian positive line bundle $(L,h)$ on $X$ with curvature form $\Theta(L,h)$, such that $\omega = i\Theta(L,h)$ is a K\"ahler form, we want to \emph{explicitly construct} an orbimetric $H$ on $L$ as an orbibundle, such that $i\Theta(L,H)$ is a K\"ahler \emph{orbiform}. 

This directly leads to the notion of \emph{singular Hermitian metrics}, introduced in~\cite[Def.~2.1]{DemaillySingPos}.

\begin{definition}
Let $X$ be a complex manifold and $(L,h)$ a hermitian line bundle on $X$. A \emph{singular Hermitian metric} $H$ is a metric on $L$, given in a local trivialization $L \supseteq V \cong U \times \CC$  by $H=e^{-\phi}h$, where $\phi \in L^1_{\loc}(U,\RR)$ is a locally integrable function on $U$.
We call $(L,H)$ a \emph{singular Hermitian line bundle}. Due to~\cite[Def.~2.3.2]{MaMarinescu}, the \emph{curvature current} of $(L,H)$ is given by
$$
\Theta(L,H)=\Theta(L,h)+ \del \delbar \phi.
$$
\end{definition}

Thus we have the following.

\begin{proposition}
\label{prop:kaehlerorbi}
Let $\orbi{X}=(X,\Delta=\sum_{j=1}^m (1-1/m_j)\Delta_j)$ be a complex orbifold, where the underlying space $X$ is a manifold. For any $j=1,\ldots,m$, let $s_j \in \orbi{O}_X(\Delta_j)$ be a section defining $\Delta_j$. Let $(L,h)$ be a positive Hermitian line bundle on $X$. Then $(L,H)$ is a positive line orbibundle, where $H=e^{-\phi}h$ is given by
$$
\phi=\sum_{j=1}^{m} \left|s_j\right|^{2/m_j}.
$$ 

In particular, the form $\omega'$ given by
$$
\omega':= i \Theta(L,h)= i \Theta(L,h) + i\del \delbar \phi
$$
is a K\"ahler orbiform.
\end{proposition}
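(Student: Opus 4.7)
The plan is to verify both assertions by passing to a local uniformization chart $\varphi\colon U_i'\cong\CC^n\to U_i\cong\CC^n$ adapted to the simple normal crossing support of $\Delta$, as in Remark~\ref{rem:geomorb}, with $\varphi(x_1,\ldots,x_n)=(x_1^{m_1},\ldots,x_k^{m_k},x_{k+1},\ldots,x_n)$ and $\Delta_j\cap U_i=\{z_j=0\}$ for $j=1,\ldots,k$. The key algebraic observation is the numerical matching between the fractional exponent $2/m_j$ appearing in $\phi$ and the ramification index $m_j$ of $\varphi$ along $\{x_j=0\}$: since $s_j$ vanishes to first order along $\Delta_j$, one has $\varphi^*s_j=u_j\,x_j^{m_j}$ for some smooth nonvanishing $u_j$, and hence
$$
\varphi^*|s_j|^{2/m_j}=|u_j|^{2/m_j}|x_j|^2
$$
is smooth on $U_i'$. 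Summing over $j$, the function $\varphi^*\phi$ is smooth and $G_i$-invariant on $U_i'$.

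From this smoothness, the first claim follows immediately: $\varphi^*H=e^{-\varphi^*\phi}\varphi^*h$ is a smooth Hermitian metric on the pullback of $L$, and its $G_i$-invariance as well as compatibility with injections of orbifold charts are inherited from the fact that $H$ is, by construction, defined globally on $X$ and then pulled back. Thus $(L,H)$ is a Hermitian orbimetric on the line orbibundle defined by $L$, in the sense of Section~\ref{sec:orbimetrics}.

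For the K\"ahler claim, I would show that $\varphi^*\omega'=\varphi^*(i\Theta(L,h))+i\del\delbar(\varphi^*\phi)$ is strictly positive on each $U_i'$. Off the ramification locus $\bigcup_{j\leq k}\{x_j=0\}$, $\varphi$ is a local biholomorphism, so $\varphi^*(i\Theta(L,h))$ is already K\"ahler and $i\del\delbar(\varphi^*\phi)$ is a smooth, bounded perturbation. At a point $p$ of the ramification locus with index set $J=\{j\leq k:x_j(p)=0\}$, the differential $d\varphi_p$ has kernel $K=\mathrm{span}\{\partial/\partial x_j:j\in J\}$, on which $\varphi^*(i\Theta(L,h))_p$ vanishes; a direct computation of the Hessian, using $\del(|x_l|^2)_p=\delbar(|x_l|^2)_p=0$ for $l\in J$ together with $\del\delbar(|x_l|^2)=dx_l\wedge d\bar x_l$, yields the clean formula
$$
i\del\delbar(\varphi^*\phi)_p|_K \;=\; \sum_{j\in J}|u_j(p)|^{2/m_j}\,i\,dx_j\wedge d\bar x_j|_K,
$$
which is strictly positive on $K$. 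Since $\varphi^*(i\Theta(L,h))_p$ is strictly positive on the complementary subspace $K^\perp$, a pointwise Schur-complement argument, applied after possibly shrinking the chart so that the smooth cross terms coming from the summands with $l\notin J$ are dominated by the positive diagonal blocks, yields strict positivity of $\varphi^*\omega'$ at $p$.

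The main technical obstacle is the bookkeeping of these cross terms and the $K^\perp$-components of the Hessian of $\varphi^*\phi$ at deep multiple crossings of $\Delta$: on $X$ the potential $\phi$ is only continuous and none of its summands is individually plurisubharmonic. What rescues the argument is precisely the arithmetic cancellation displayed in the first paragraph — the fractional exponent $2/m_j$ on $X$ becomes the integer exponent $2$ on the uniformization — so that after pullback each summand is smooth and contributes a strictly positive Hessian in the direction of the very coordinate along which $\varphi^*\omega$ degenerates. With this local model, the computation reduces, as in Example~\ref{ex:orbmet}, to an essentially one-variable positivity check in each coordinate.
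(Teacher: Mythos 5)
Your overall strategy coincides with the paper's: pass to a uniformizing chart adapted to the simple normal crossing support, exploit the arithmetic cancellation $\varphi^*|s_j|^{2/m_j}=w_j\,|x_j|^2$ with $w_j$ smooth and positive, and then check positivity chart by chart. This is exactly the content of Example~\ref{ex:orbmet}, which is what the paper's own short proof cites (and which in turn defers the analytic details to \cite[Prop.~2.1]{claudon}). You add a welcome precision over Example~\ref{ex:orbmet} by keeping track of the nonvanishing holomorphic unit $u_j$ rather than pretending $s_j=z_j$.

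However, your treatment of the positivity step has a concrete gap. The ``clean formula''
$i\del\delbar(\varphi^*\phi)_p\bigl|_K=\sum_{j\in J}|u_j(p)|^{2/m_j}\,i\,dx_j\wedge d\bar x_j\bigl|_K$
is not the full restriction of the Hessian to $K$: the summands with $l\notin J$ also contribute. Writing $\varphi^*\phi_l=w_l|x_l|^2$ for $l\leq k$ and expanding, the first three terms of $i\del\delbar(w_l|x_l|^2)$ die on $K$ because $dx_l|_K=d\bar x_l|_K=0$, but the fourth term $|x_l(p)|^2\,i\del\delbar w_l\bigl|_K$ survives and need not be positive; likewise every component $\Delta_l$ that misses the chart contributes the Hessian of the smooth function $\varphi^*\phi_l$ restricted to $K$, which again has no sign. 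Your proposed remedy --- ``shrinking the chart so that the cross terms are dominated'' --- does kill the first class of surviving terms (since $|x_l(p)|\to 0$), but it does nothing to the second class: the Hessian of a fixed smooth function at the fixed point $p$ is unchanged when you shrink the chart around $p$. The same issue affects the $K$--$K^\perp$ cross terms coming from those summands. So the Schur-complement step, as written, is not a pointwise linear-algebra argument; it needs either a normalization of the Hermitian metrics on the $\mathcal{O}_X(\Delta_l)$ (making the weights locally flat so that those extra Hessians vanish, in which case each $\varphi^*\phi_l$ is plurisubharmonic and the argument closes cleanly), or the quantitative compactness/scaling analysis carried out in \cite[Prop.~2.1]{claudon}. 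To be fair, the paper's own two-line proof and the toy computation in Example~\ref{ex:orbmet} do not address this point either, so your sketch is at essentially the same level of rigor --- but the ``reduces to a one-variable positivity check in each coordinate'' conclusion should not be asserted, since that is precisely what the cross terms obstruct.
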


\begin{proof}
Since $(L,h)$ is positive, the form $\omega=i \Theta(L,h)$ is a K\"ahler form on the complex manifold $X$. On the other hand, $\varphi$ is chosen in such way, that  $i\del \delbar \phi$ coincides with $\omega_\Delta$ from Example~\ref{ex:orbmet}. Thus the computations from Example~\ref{ex:orbmet} verify the claim.
\end{proof}

Finally note that we can integrate $n$-forms by a partition of unity and by setting
$$
\int_{U_i} \sigma := \frac{1}{G_i} \int_{U_i'} \varphi^*(\sigma)
$$
in a local uniformization $(U_i',G_i,\varphi_i,U_i)$, see e.g.~\cite[Eq.~(4.2.2)]{sasakian}.
Thus if $(\orbi{L},h)$ is a Hermitian orbibundle on a complete K\"ahler orbifold $(\orbi{X},\omega')$, we have a scalar product 
$$
\left\langle s_1, s_2 \right\rangle := \int_X \left\langle s_1, s_2 \right\rangle_{h} dV_\omega
$$
for sections of $\orbi{L}$ and an associated $L^2$-norm $\left|\cdot\right|_h$, see~\cite[Sec.~5.4.2]{MaMarinescu}.

\section{The orbifold universal cover and the $\Gamma$-reduction}
\label{sec:orbicover}

\begin{definition}
\label{def:orbifund}
The \emph{orbifold fundamental group} of a geometric orbifold $\orbi{X}=(X,\Delta)$ is the quotient
$$
\pi_1(X,\Delta):=\pi_1(X \setminus \supp(\Delta)) / \langle \gamma_i^{m_i}, i \in I \rangle,
$$
where for each $i \in I$, $\gamma_i$ is a small loop around a general point of the divisor $\Delta_i$.
Associated to the orbifold fundamental group, there is the notion of \emph{orbifold universal cover} $\pi  \colon \widetilde{\orbi{X}} \to \orbi{X}$. It is a ramified Galois cover between complex analytic spaces, \'etale over $X \setminus \supp(\Delta)$.
\end{definition}

\begin{remark}
Let $\orbi{X}=(X,\Delta)$ be a smooth geometric orbifold. Then over a (sufficiently small) orbifold chart $(U_i',G_i',\varphi_i,U_i)$ of $X$ as in Remark~\ref{rem:geomorb}, the preimage under the orbifold universal cover $\pi \colon \widetilde{\orbi{X}} \to \orbi{X}$ has connected components $V_i$, such that $V_i$ has a local uniformization $(V_i',H_i',\psi_i,V_i)$ with $H_i$ a subgroup of $G_i$. In particular, since $G_i$ is abelian, $H_i$ is so as well and $V_i$ only has toric singularities. Locally, $\left. \pi \right|_{V_i} \colon V_i \to U_i$ is a quotient by $G_i / H_i$ and the lift $V_i' \to U_i'$ is just the identity~\cite[Rem.~1.2]{claudon}. So in a sense, the universal cover is locally trivial as we expect from a cover.
\end{remark}

As we mentioned before, the analogy between geometric and classical orbifolds not only holds if the underlying space is smooth. In particular, on the underlying space $X$ of a classical complex orbifold $\orbi{X}=(X,\orbi{U})$ one always can define a divisor $\Delta$, such that the geometric orbifold $(X,\Delta)$ has the canonical orbifold structure $\orbi{X}=(X,\orbi{U})$, see~\cite[p.~561]{BoyGalKoll}. In particular, this holds for the orbifold universal cover $\widetilde{\orbi{X}}$. But we do not need the structure of a geometric orbifold on $\widetilde{\orbi{X}}$ here.

An important observation for us will be that if $X$ is a complex analytic space,  $\Delta_1,\ldots,\Delta_m$ are smooth prime divisors on $X$ with normal crossings, and small loops $\gamma_i$ around general points of $\Delta_i$ are of finite order $m_i$ in $\pi_1(X \setminus(\Delta_1 \cup \ldots \cup \Delta_m))$, then 
$$
\pi_1(X \setminus(\Delta_1 \cup \ldots \cup \Delta_m)) = \pi_1\left(X,\sum_{i=1}^m \left(1-\frac{1}{m_i}\right)\Delta_i\right).
$$

Note that by the Hopf-Rinow-Theorem for orbifolds~\cite[Thm.~4.2.2]{Caramello}, the orbifold covers of a complete orbifold (with respect to an orbimetric $\omega'$, cf. Section~\ref{sec:orbimetrics}), are complete with respect to the pullback metric (since orbifold geodesics can be lifted).  In particular, the orbifold universal cover of a compact orbifold with a Hermitian orbimetric is complete with respect to the pullback orbimetric.

An important ingredient for us is the \emph{$\Gamma$-reduction} or \emph{Shafarevich map}. This construction has been introduced by Koll\'ar for proper normal projective varieties~\cite[Def.~1.4]{KollarShaf2} and independently by Campana for compact K\"ahler manifolds~\cite[Thm.~3.5,Def.~3.8]{CampGamma1}. Formulated on the universal cover $\widetilde{X}$ of a compact K\"ahler manifold $X$, it says that there is a unique almost holomorphic fibration $\widetilde{\gamma} \colon \widetilde{X} \dasharrow \Gamma(\widetilde{X})$, such that any compact irreducible subvariety of $\widetilde{X}$ through a \emph{very general point} $x \in \widetilde{X}$ is contained in the fiber $\widetilde{\gamma}^{-1}(\widetilde{\gamma}(x))$. The general fibers of $\widetilde{\gamma}$ are exactly the maximal compact subvarieties of $\widetilde{X}$. The action of $\pi_1(X)$ on $\widetilde{X}$ descends to $\Gamma(\widetilde{X})$ and thus by quotienting induces an almost holomorphic fibration $\gamma \colon X \dasharrow \Gamma(X)$, of which the fibers are the maximal subvarieties with \emph{finite} fundamental group. In turn, the connected components of the preimages of such fibers are exactly the fibers of $\widetilde{\gamma}$.

This concept has been generalized by Claudon in~\cite{claudon} to smooth geometric orbifolds - using K\"ahler orbiforms as in Example~\ref{ex:orbmet} -, see also~\cite[Sec.~12.5]{CampSecOrbi}. We have the following~\cite[Thm.~0.2]{claudon}.

\begin{theorem}
\label{thm:G-red}
Let $\orbi{X}=(X,\Delta)$ be a compact smooth geometric K\"ahler orbifold and $\pi \colon \widetilde{\orbi{X}} \to \orbi{X}$ its orbifold universal cover. There  are almost holomorphic fibrations
$
\widetilde{\gamma} \colon \widetilde{\orbi{X}} \dasharrow \Gamma(\widetilde{\orbi{X}})
$
and
$
\gamma \colon \orbi{X} \dasharrow \Gamma(\orbi{X})
$
, such that the diagram
$$
 \xymatrix@C=3em{ 
\widetilde{\orbi{X}} \ar@{-->}[r]^{\widetilde{\gamma}}  \ar[d]^{/ \pi_1(\orbi{X})}
&
\Gamma(\widetilde{\orbi{X}}) \ar[d]^{/ \pi_1(\orbi{X})}  \\ 
\orbi{X} \ar@{-->}[r]_{\gamma} 
&
\Gamma(\orbi{X})
}
$$
commutes and the following hold:
\begin{enumerate}
\item If $V \subseteq X$ is a smooth subvariety meeting $\Delta$ transversally, such that the image of $\pi_1(V,\left.\Delta\right|_V)$ in $\pi_1(X,\Delta)$ is finite, and $V$ meets the fiber of $\gamma$ through a very general point, then $V$ is contained in this fiber. 
\item Every compact irreducible subvariety of $\widetilde{\orbi{X}}$ through a very general point $x \in \widetilde{\orbi{X}}$ is contained in the fiber $\widetilde{\gamma}^{-1}(\widetilde{\gamma}(x))$.
\item There exist open subsets $X^0 \subset X$ and $\Gamma(\orbi{X})^0 \subset \Gamma(\orbi{X})$, such that $\left. \gamma\right|_{X^0} \colon X^0 \to \Gamma(\orbi{X})^0$ is a proper holomorphic, topologically locally trivial fibration.
\end{enumerate}
\end{theorem}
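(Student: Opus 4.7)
The plan is to follow Claudon's orbifold adaptation of the Campana--Koll\'ar construction. The starting point is to equip $\orbi{X}$ with a K\"ahler orbiform $\omega'$ (obtained from Example~\ref{ex:orbmet} or Proposition~\ref{prop:kaehlerorbi}) and to pull it back to the orbifold universal cover $\widetilde{\orbi{X}}$. By the orbifold Hopf--Rinow theorem mentioned after Definition~\ref{def:orbifund}, since $\orbi{X}$ is compact the pullback orbimetric on $\widetilde{\orbi{X}}$ is complete, and $\widetilde{\orbi{X}}$ is a complete K\"ahler complex analytic space.

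Next, I would consider the Barlet space $\BBB(\widetilde{\orbi{X}})$ of compact analytic cycles on $\widetilde{\orbi{X}}$, which exists in the K\"ahler analytic setting. Define a relation on $\widetilde{\orbi{X}}$ by declaring $x \sim y$ if there is a chain of points $x = x_0, x_1, \ldots, x_k = y$ such that each consecutive pair lies in a common compact irreducible analytic subvariety of $\widetilde{\orbi{X}}$; its transitive closure is an equivalence relation. The classical argument (chain-connecting components in the Barlet space and taking a suitable quotient) produces a Zariski open subset of $\widetilde{\orbi{X}}$ on which the equivalence classes are the fibers of a proper, almost holomorphic fibration $\widetilde{\gamma} \colon \widetilde{\orbi{X}} \dasharrow \Gamma(\widetilde{\orbi{X}})$. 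Property (2) is built into this construction: any compact irreducible subvariety through a very general point is swallowed by the fiber. Since the $\pi_1(\orbi{X})$-action on $\widetilde{\orbi{X}}$ sends compact subvarieties to compact subvarieties, it preserves $\sim$ and therefore descends to $\Gamma(\widetilde{\orbi{X}})$; the quotient diagram in the statement then defines $\gamma \colon \orbi{X} \dasharrow \Gamma(\orbi{X})$.

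For Property (1), I would argue as follows: if $V \subseteq X$ is smooth, meets $\supp(\Delta)$ transversally, and the image of $\pi_1(V, \left.\Delta\right|_V)$ in $\pi_1(X, \Delta)$ is finite, then every connected component of $\pi^{-1}(V) \subseteq \widetilde{\orbi{X}}$ is a \emph{finite} orbifold cover of $(V, \left.\Delta\right|_V)$, hence a compact irreducible analytic subvariety of $\widetilde{\orbi{X}}$. If $V$ meets the $\gamma$-fiber through a very general point, then some component of $\pi^{-1}(V)$ meets a $\widetilde{\gamma}$-fiber through a very general point of $\widetilde{\orbi{X}}$, and by Property (2) this component is contained in that fiber; projecting back, $V$ lies in the corresponding $\gamma$-fiber. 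Property (3) is the standard fact that, restricting to the smooth locus of the universal family of fibers over a Zariski open subset of the base, the map becomes a proper submersion, which by Ehresmann is topologically locally trivial.

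The main obstacle is that one cannot simply invoke Campana's or Koll\'ar's result verbatim: one must check (i) that the Barlet/cycle space machinery and the basic K\"ahler reduction arguments go through on the complex analytic space $\widetilde{\orbi{X}}$, which carries only a K\"ahler \emph{orbiform}, not a smooth K\"ahler form in the manifold sense, and (ii) that the orbifold covering $\pi$ behaves well enough on transversal subvarieties for the preimage $\pi^{-1}(V)$ to split into compact finite orbifold covers of $V$. Both issues are precisely what is resolved in~\cite{claudon}, so after setting up the K\"ahler orbiform and identifying the relevant invariance properties of the cycle space, the conclusion is obtained by direct reference to Claudon's theorem rather than a new argument.
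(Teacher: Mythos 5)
Your proposal is correct in spirit and takes essentially the same route as the paper: the paper does not prove Theorem~\ref{thm:G-red} from scratch but states it as Claudon's result (citing his Theorem~0.2), and then appends a remark explaining how the statement on the universal cover relates to the statement on $\orbi{X}$ via Claudon's Lemma~2.2, and why item~(3) holds by the argument at the end of the proof of Koll\'ar's Proposition~2.4 in the Shafarevich-map paper. Your sketch of the Barlet-space/chain-connecting construction and the descent by the $\pi_1(\orbi{X})$-action is an accurate account of what lies underneath Claudon's theorem, and your final paragraph correctly identifies that the two technical issues (K\"ahler orbiform instead of a K\"ahler form, behaviour of $\pi^{-1}(V)$ for transversal $V$) are precisely what Claudon resolves.

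One small inaccuracy worth flagging: your justification of item~(3) via Ehresmann is not quite right. The $\gamma$-fibers need not be smooth, so one cannot pass to the smooth locus of the universal family and retain properness. The argument the paper actually points to (Koll\'ar, Shafarevich maps, Prop.~2.4) chooses a Zariski open subset of the base over which the fibers form a flat family with a fixed Whitney stratification type and then applies Thom's first isotopy lemma to get topological local triviality, rather than Ehresmann. This is a minor point, since both you and the paper ultimately outsource the statement to the literature, but the Ehresmann route as written would not close the gap on its own.
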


\begin{remark}
Theorem 0.2 of~\cite{claudon} is formulated only on the universal cover, while~\cite[Thm.~12.23]{CampSecOrbi} is formulated on the orbifold $\orbi{X}$ itself. The connection between the both is~\cite[Le.~2.2]{claudon}. The third item has not been formulated in the orbifold case, but the argument at the end of the proof of Proposition 2.4 in~\cite{KollarShaf2} works here as well.
\end{remark}

\section{Dolbeault and $L^2$-cohomology for K\"ahler orbifolds}
\label{sec:L2}

Following~\cite[Sec.~5]{OrbifoldSurvey}, we can define orbifold Dolbeault cohomology for  complete K\"ahler orbifolds $(\orbi{X}=(X,\orbi{U}),\omega)$ in the following way.
Denote by $\Omega_X^{p,q}$ the sheaf of  $(p,q)$-orbiforms, defined by the usual $(p,q)$-forms on the local uniformizations. The locally invariant sections define the sheaf $\Omega^{p,q}_X$ on the udnerlying space $X$ and we denote the space of global sections by $\Omega^{p,q}_X(X)$. The exterior derivative and the Dolbeault operators $\dif= \del + \delbar$ are well defined, with
$$
\del\colon \Omega^{p,q}_X \to \Omega^{p+1,q}_X, \qquad
\delbar\colon \Omega^{p,q}_X \to \Omega^{p,q+1}_X.
$$

\begin{definition}
The \emph{$(p,q)$-th orbifold Dolbeault cohomology group} is defined by
$$
H^{p,q}(X) := \frac{\ker(\delbar\colon \Omega_X^{p,q}(X) \to \Omega_X^{p,q+1}(X))}{\im(\delbar\colon \Omega_X^{p,q-1}(X) \to \Omega_X^{p,q}(X))}.
$$ 

\end{definition}

If $\orbi{E} \to \orbi{X}$ is a holomorphic orbibundle, then one can similarly define the Dolbeault complex $(\Omega_X^{p,q}(X,\orbi{E}),\delbar^\orbi{E})$ of $(p,q)$-orbiforms with values in $\orbi{E}$ as well as Dolbeault cohomology groups $H^{p,q}(X,\orbi{E})$. Then the \emph{Dolbeault isomorphism for orbifolds} holds, see~\cite[Sec.~5.4.2]{MaMarinescu}.
Now let $\orbi{E}$ be endowed with a (smooth or singular) Hermitian orbimetric $h$.
Following~\cite[Eq.~(B.4.12)]{MaMarinescu},  we define the $L^2$-spaces
$$
L^2_{p,q}(X,\orbi{E}):=\{s \in \Omega_X^{p,q}(X,\orbi{E});~ \int_X \left| s\right|^2_h dV_\omega < \infty \},
$$
and the $L^2$-Dolbeault cohomology groups by 
$$
H_{(2)}^{p,q}(X,\orbi{E}) := \frac{\ker(\delbar^\orbi{E}) \cap L^2_{p,q}(X,\orbi{E})}{\im(\delbar^\orbi{E}) \cap L^2_{p,q}(X,\orbi{E}) }.
$$ 
Well-definedness follows from~\cite[Sec.~C.3]{Ballmann}, which can be directly transferred to complete K\"ahler orbifolds.

\section{$L^2$-vanishing for orbifolds}
\label{sec:orbivan}

The singular Hermitian metrics from Section~\ref{sec:orbimetrics} will be more useful to us than just for constructing K\"ahler orbiforms from positive line bundles on the underlying space. For a singular Hermitian line bundle $(L,H)$ with $H=e^{-\phi}h$ on a complex manifold $X$, there is the notion of the \emph{$L^2$-sheaf} $\orbi{L}^2(L,H)$ of locally square-integrable functions with respect to $H$, given by
$$
\orbi{L}^2(L,H)(U) = \{ \sigma \in  \Gamma(U,L);~ \left|\sigma\right|_h^2 e^{-\phi} \in L_{\loc}^1(U)\},
$$
see~\cite[Eq.~(3.1)]{TakayamaNonvanishing}. In particular, the function $\phi$ defines a singular Hermitian metric $e^{-\phi} z \zbar$ on the trivial line bundle $X \times \CC$. This leads us to the definition of the \emph{multiplier ideal sheaf} $\orbi{I}(\phi):=\orbi{L}^2(X \times \CC,e^{-\phi})$. In particular, $\orbi{L}^2(L,H) = L \otimes \orbi{I}(\phi)$. Note that the functions $\phi$ may only be given locally, so in this notation $\phi$ can rather be seen as a collection of locally defined functions. On the other hand, it may still be possible to express $\phi$ globally by certain sections as e.g. in Proposition~\ref{prop:kaehlerorbi}.

A \emph{plurisubharmonic} or shortly \emph{psh} function is defined by certain semicontinuity properties, see e.g.~\cite[Def.~(1.4)]{DemL2Van}. We will use the following characterization from~\cite[Prop.~B.2.10,~B.2.16]{MaMarinescu}, which is much more immediate in our setting.

\begin{definition}
Let $X$ be a complex analytic manifold. A function $\phi: X \to \RR$ is called \emph{plurisubharmonic} or \emph{psh}, if $i\del \delbar \phi$ is a semipositive form. It is called \emph{strictly psh}, if $\phi \in L^1_\loc(X)$ and $i\del \delbar \phi$ is (strictly) positive.
\end{definition}

The point is that obviously on the one hand, a singular Hermitian metric $H$ on a positive Hermitian line bundle $(L,h)$ defined by a psh function $\phi$  gives a \emph{positive} $(1,1)$-form $\omega=i\Theta(L,H)$. 

On the other hand, we have the \emph{Nadel coherence theorem}~\cite[Prop.~(5.7)]{DemL2Van}, stating that $\orbi{I}(\phi)$ is a \emph{coherent sheaf} if $\phi$ is psh. This can be easily transformed to the orbifold setting. First let us define the analogue of the multiplier ideal sheaf following~\cite[Def.~5.2.9]{sasakian}.

\begin{definition}
Let $\orbi{X}=(X,\Delta)$ be a complex orbifold and let $(L,H=he^{-\phi})$ be a singular Hermitian orbibundle on $\orbi{X}$. The \emph{multiplier ideal orbisheaf} $\orbi{I}_X(\phi)$ is the orbisheaf defined on local uniformizations  $U_i' \to U_i$ by
$$
\orbi{I}_X(\phi)(U_i')=\left\lbrace f \in \orbi{O}_{\orbi{X}}^{G_i}(U_i');~ |f|^2e^{-\phi} \in L^1_{\loc}(U_i')\right\rbrace.
$$
\end{definition}

The orbifold version of Nadel's coherence theorem follows from the standard version since the functor taking $G_i$-invariant sections is exact by finiteness of $G_i$. Thus we have:

\begin{theorem}[Nadel's coherence theorem for orbifolds]
Let $\orbi{X}=(X,\Delta)$ be a complex orbifold and $(L,H=he^{-\phi})$ be a singular Hermitian orbibundle on $\orbi{X}$. Then the (pushforward of the) multiplier ideal orbisheaf $\orbi{I}_X(\phi)$ is a coherent sheaf of $\orbi{O}_X$-modules on $X$.
\end{theorem}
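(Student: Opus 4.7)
The plan is to reduce the coherence statement chart by chart to the classical Nadel coherence theorem on the local uniformizations and then descend using exactness of the $G_i$-invariants functor, exactly as the paragraph preceding the theorem suggests.

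First, I would work inside a single orbifold chart $(U_i',G_i,\varphi_i,U_i)$. The singular Hermitian orbimetric $H$ pulls back to a $G_i$-invariant singular Hermitian metric $h\,e^{-\phi|_{U_i'}}$ on the ordinary line bundle $L|_{U_i'}$ over the smooth manifold $U_i'$. Because this is a \emph{positive} singular Hermitian structure in the sense of the previous section, the local weight $\phi|_{U_i'}$ is psh. The classical Nadel coherence theorem then applies directly and yields that the ordinary multiplier ideal sheaf $\orbi{I}(\phi|_{U_i'})$ is a coherent $\orbi{O}_{U_i'}$-module on the smooth manifold $U_i'$.

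Next, I would descend to $U_i$. The group $G_i$ acts on $L|_{U_i'}$ preserving $h\,e^{-\phi|_{U_i'}}$ (this is exactly the orbimetric compatibility), so it acts on $\orbi{I}(\phi|_{U_i'})$ by sheaf automorphisms. The finite map $\varphi_i \colon U_i' \to U_i$ therefore pushes this forward to a coherent sheaf $(\varphi_i)_*\orbi{I}(\phi|_{U_i'})$ on $U_i$ endowed with a $G_i$-action. Since $|G_i|$ is invertible in $\CC$, the Reynolds averaging operator gives a $G_i$-equivariant splitting, exhibiting
\begin{equation*}
\orbi{I}_X(\phi)\big|_{U_i} \;=\; \bigl((\varphi_i)_*\orbi{I}(\phi|_{U_i'})\bigr)^{G_i}
\end{equation*}
as a direct summand of a coherent $\orbi{O}_{U_i}$-module, hence itself coherent. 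This is precisely the exactness of the invariants functor for finite groups in characteristic zero that the author appeals to.

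Finally, I would check that these local pieces glue. Coherence is a local property on $X$, and the transition isomorphisms defining $\orbi{I}_X(\phi)$ as an orbisheaf come from the injections $\lambda_{ji}\colon U_i' \to U_j'$, which pull back multiplier ideals in the standard functorial way; the refinement axiom in the orbifold atlas ensures the invariant pieces match on common refining charts. The only subtlety worth flagging is that pshness of each local weight $\phi|_{U_i'}$ must be part of the setup, as in the classical theorem; granting this, the argument is formal, combining classical Nadel on each cover with Reynolds averaging to descend and local-to-global gluing to patch.
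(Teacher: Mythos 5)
Your proposal is correct and follows essentially the same line as the paper, which proves the statement with the one-sentence remark that it ``follows from the standard version since the functor taking $G_i$-invariant sections is exact by finiteness of $G_i$.'' You have simply spelled out the details that remark leaves implicit: classical Nadel on each local uniformization, pushforward under the finite map $\varphi_i$, and descent via the Reynolds operator realizing the invariants as a direct summand of a coherent sheaf.
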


The next step to go now is the \emph{Nadel vanishing theorem}. The orbifold version is the following~\cite[Thm.~6.5]{DemKoll}.

\begin{theorem}[Nadel's vanishing theorem for orbifolds]
\label{thm:nadelvan}
Let $(\orbi{X},\omega')$ be a K\"ahler orbifold, that is a complex orbifold $\orbi{X}=(X,\Delta)$ 
with a K\"ahler orbiform $\omega'$. Let $(L,H=he^{-\phi})$ be a singular Hermitian orbibundle on $\orbi{X}$, where $h$ is a smooth Hermitian orbimetric on $L$. Assume that there exists a constant $c \in \RR_{>0}$, such that $i\Theta(L,H)\geq c\omega'$. If $K_\orbi{X} \otimes L$ is an \emph{invertible} sheaf on $X$, then 
$$
H^q(X,K_\orbi{X} \otimes L \otimes \orbi{I}_X(\phi))=0 ~\mathrm{for}~ q \geq 1.
$$
\end{theorem}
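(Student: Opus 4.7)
The plan is to adapt Demailly's proof of the classical Nadel vanishing theorem to the orbifold setting, throughout exploiting that the functor of taking $G_i$-invariants is exact on $\orbi{O}_{U_i'}$-modules, since each $G_i$ is finite. Almost every step then takes place on the smooth local uniformizations $U_i'$; one only has to remember to restrict to $G_i$-invariant objects.

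First I would build a fine resolution
$$
0 \to K_{\orbi{X}} \otimes L \otimes \orbi{I}_X(\phi) \to \orbi{A}^{n,0}_{(2)} \to \orbi{A}^{n,1}_{(2)} \to \cdots
$$
on $X$, where $\orbi{A}^{n,q}_{(2)}$ assigns to an orbifold chart $U_i=\varphi_i(U_i')$ the space of $G_i$-invariant $(n,q)$-forms on $U_i'$ with values in $L$ that are locally square-integrable with respect to $H=he^{-\phi}$, with differential $\delbar$. Exactness is the Dolbeault--Grothendieck lemma applied to $G_i$-invariant forms on $U_i'$, combined with the definition of $\orbi{I}_X(\phi)$; fineness holds because any partition of unity on $X$ pulls back to $G_i$-invariant partitions of unity on each $U_i'$ after averaging over $G_i$. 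Consequently $H^q(X, K_{\orbi{X}}\otimes L \otimes \orbi{I}_X(\phi))$ is computed by the $q$-th cohomology of the complex of global invariant $L^2$-sections.

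The main step is to show that this $L^2$-Dolbeault cohomology vanishes for $q\geq 1$. On every $U_i'$, the standard Bochner--Kodaira--Nakano identity for $(\omega', h)$ yields, for any smooth $G_i$-invariant compactly supported $(n,q)$-form $u$ with values in $L$,
$$
\|\delbar u\|_H^2 + \|\delbar^{*} u\|_H^2 \ \geq\ \int_{U_i'} \langle [i\Theta(L,H),\Lambda] u,\, u\rangle_H \, dV_{\omega'},
$$
and the hypothesis $i\Theta(L,H)\geq c\omega'$ bounds the right-hand side below by $cq\|u\|_H^2$. Regularizing $\phi$ by a decreasing sequence of smooth psh functions $\phi_\varepsilon$, chosen $G_i$-invariantly (possible because $G_i$ acts by isometries, so averaging preserves plurisubharmonicity), and then passing to the limit via the standard Andreotti--Vesentini / H\"ormander solvability argument, we obtain that any $G_i$-invariant $\delbar$-closed $L^2$-form of bidegree $(n,q)$ lies in the image of $\delbar$, giving the required vanishing.

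The main obstacle is ensuring the analytic machinery from the manifold case survives the simultaneous presence of the singular weight $\phi$ and of the orbifold structure: one needs $(\orbi{X},\omega')$ to be complete (automatic when $\orbi{X}$ is compact, by the orbifold Hopf--Rinow theorem recalled in Section~\ref{sec:orbicover}), one needs $G_i$-equivariant cutoff functions and regularizations of $\phi$, and one must check that the multiplier orbisheaves associated to $\phi_\varepsilon$ actually realize $\orbi{I}_X(\phi)$ in the limit. All three points reduce to classical facts on the uniformizations $U_i'$ once $G_i$-equivariance is arranged via averaging, which is exactly where the invertibility hypothesis on $K_{\orbi{X}} \otimes L$ is used to pass from invariant sections on $U_i'$ back to an honest coherent sheaf on $X$.
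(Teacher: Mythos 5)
Note first that the paper itself does not prove this theorem; it quotes it verbatim from~\cite[Thm.~6.5]{DemKoll} and only comments on the role of the invertibility hypothesis, so what you have written is a from-scratch proof sketch. Structurally it is the right transfer of the manifold argument: a fine resolution of $K_\orbi{X}\otimes L\otimes\orbi{I}_X(\phi)$ by sheaves of $G_i$-invariant locally square-integrable $(n,q)$-forms, the de Rham--Weil identification of $H^q$ with $L^2$-Dolbeault cohomology, and vanishing from Bochner--Kodaira--Nakano combined with a Demailly-type regularization of the weight, all carried out $G_i$-equivariantly on the uniformizations by averaging.

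However, your explanation of where the invertibility of $K_\orbi{X}\otimes L$ is used --- ``to pass from invariant sections on $U_i'$ back to an honest coherent sheaf on $X$'' --- is not right, and in fact is nearly the opposite of the truth. Coherence of the pushforward of $G_i$-invariant sections holds with \emph{no} hypothesis beyond finiteness of $G_i$; this is precisely the paper's orbifold Nadel coherence theorem, and it is the exactness of taking invariants, which you yourself invoke, that does the work. The invertibility hypothesis is needed for a different reason, which the paper's remark after the theorem makes explicit: the $L^2$-estimates are carried out over $X\setminus\supp(\Delta)$, and one must interpret the holomorphic $L^2$-sections they produce there as local sections of a single line bundle on all of $X$. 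This is automatic only when $K_\orbi{X}\otimes L$ is invertible on $X$; in general the pushforward-of-invariants operation does not commute with tensor product, because of the implicit floor operations on the defining $\QQ$-divisors, so the sheaf $K_\orbi{X}\otimes L$ on $X$ need not agree with $(K_\orbi{X})_X\otimes L_X$, and the extension across $\supp(\Delta)$ could produce the wrong object. Two smaller points: the H\"ormander solvability step and the identification of locally $L^2$ with globally $L^2$ sections both tacitly require compactness of $\orbi{X}$ (which the theorem as printed omits, though it is assumed in~\cite{DemKoll} and in every application in this paper), and there is no separate need to ``check that $\orbi{I}_X(\phi_\varepsilon)$ realizes $\orbi{I}_X(\phi)$ in the limit'' --- the multiplier ideal is built into the $L^2$-complex with singular weight $e^{-\phi}$ from the outset, while the regularization serves only to justify the Bochner--Kodaira inequality in the presence of a singular weight.
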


Several things have to be noted. First, as Koll\'ar and Demailly stress in the paragraph after~\cite[Thm.~6.5]{DemKoll}, for the orbifold version, it is really necessary that $K_\orbi{X} \otimes L$ is an invertible sheaf on $X$. This is because on the one hand, the above tensor product $K_\orbi{X} \otimes L \otimes \orbi{I}_X(\phi)$ means first taking the usual tensor product on local uniformizations $U_i'$, then taking $G_i$-invariant sections, and finally the direct image sheaves on $U_i$. On the other hand, the statement is obtained by $L^2$-estimates - with respect to the weight $e^{-\phi}$ - of sections of $K_\orbi{X} \otimes L$ on $X \setminus \supp(\Delta)$.

It turns out that for us these $L^2$-estimates are even more important than the statement of Theorem~\ref{thm:nadelvan}. As they are not explicitly stated in~\cite{DemKoll}, we refer to~\cite[Cor.~(5.3)]{DemL2Van}. See also~\cite[Thm.~4.4]{TianXu} and the subsequent paragraph therein for the orbifold case.

\begin{proposition}
\label{prop:delbarest}
Let $(\orbi{X},\omega')$ be a complete K\"ahler orbifold. Let $(L,H=he^{-\phi})$ be a singular Hermitian orbibundle on $\orbi{X}$, where $h$ is a smooth Hermitian orbimetric on $L$. Assume that there exists a constant $c \in \RR_{>0}$, such that $i\Theta(L,H)\geq c\omega'$. Then for any $\delbar$-closed form $g \in L_{p,q}^2(\orbi{X}, L)$,  there is a form $f \in L_{p,q-1}^2(\orbi{X},L)$, with $\delbar f=g$ and
$$
\int_X \left|f\right|_H^2 \dif V_{\omega'} \leq \frac{1}{qc} \int_X \left|g\right|_H^2 \dif V_{\omega'}.
$$
\end{proposition}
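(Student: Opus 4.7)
The plan is to adapt the classical H\"ormander-Demailly $L^2$-estimate on complete K\"ahler manifolds (\cite[Cor.~(5.3)]{DemL2Van}) to the orbifold setting by working $G_i$-equivariantly on each local uniformization. The three classical ingredients --- the Bochner-Kodaira-Nakano-Demailly identity, the density of smooth compactly supported forms in the graph norm of $\delbar \oplus \delbar^*$, and a Hilbert space duality argument --- all pass to the orbifold setting, because the local uniformizing groups are finite and taking $G_i$-invariants commutes with the relevant constructions and preserves $L^2$-norms up to a factor $1/|G_i|$.

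First I would lift all data to a local chart $(U_i',G_i,\varphi_i,U_i)$. On the smooth manifold $U_i'$, the pullback $\varphi_i^*\omega'$ is an honest $G_i$-invariant K\"ahler form, the orbibundle $(L,H=he^{-\phi})$ pulls back to a $G_i$-equivariant singular Hermitian line bundle, and the curvature inequality $i\Theta(L,H)\geq c\omega'$ is preserved. By definition, $L^2$-sections of the orbibundle correspond to $G_i$-invariant $L^2$-sections on $U_i'$, and $\int_X$ translates to $\tfrac{1}{|G_i|}\int_{U_i'}$ on each chart (cf.~Section~\ref{sec:orbimetrics}). The classical Bochner-Kodaira-Nakano-Demailly identity holds pointwise on each $U_i'$ as a curvature computation, and after patching by a $G_i$-invariant partition of unity I obtain the a priori estimate
$$
qc \int_X \left|u\right|^2_H \dif V_{\omega'} \leq \|\delbar u\|^2_H + \|\delbar^* u\|^2_H
$$
for all smooth compactly supported $(p,q)$-orbiforms $u$ with values in $L$.

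The decisive analytic step, where completeness of $(\orbi{X},\omega')$ enters, is the extension of this estimate from compactly supported smooth forms to all elements of $\mathrm{Dom}(\delbar) \cap \mathrm{Dom}(\delbar^*)$. Following Andreotti-Vesentini, I would construct a sequence of $G_i$-invariant cutoff functions $\chi_\nu \nearrow 1$ on $X$ with $\sup \left|\dif \chi_\nu\right|_{\omega'} \to 0$, using the Hopf-Rinow theorem for orbifolds (cf.~Section~\ref{sec:orbicover}) to exhaust $X$ by relatively compact orbifold distance balls. The required density follows, so the a priori estimate holds in general. The standard Hilbert space duality argument then defines the functional $\delbar^* u \mapsto \langle u, g\rangle$ on $\overline{\im\,\delbar^*}$, bounded by $(qc)^{-1/2}\|g\|_H$ after using $\delbar g=0$ to reduce to $\delbar$-closed $u$, extends it by Hahn-Banach, and represents it by the sought $f \in L^2_{p,q-1}(X,L)$ with $\delbar f = g$ and $\|f\|^2_H \leq (qc)^{-1}\|g\|^2_H$.

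The main obstacle I expect is the careful handling of the singular weight: $h$ is smooth but $H=he^{-\phi}$ is merely singular, so the above argument is strictly valid only for smooth $\phi$. The standard remedy is Demailly's regularization by a decreasing sequence of smooth strictly psh functions $\phi_\nu \searrow \phi$ --- carried out $G_i$-equivariantly on uniformizations by convolution followed by averaging over $G_i$ --- which preserves the curvature bound with a slightly weaker constant $c-\varepsilon_\nu \to c$. For each $\phi_\nu$ the argument above yields $f_\nu$ with uniform $L^2$-bound; Banach-Alaoglu extracts a weak $L^2$-limit and monotone convergence in $e^{-\phi_\nu}\nearrow e^{-\phi}$ yields $f$ with $\delbar f=g$ in the sense of distributions and the claimed $L^2$-estimate.
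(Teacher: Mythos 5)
The paper gives no proof of its own for this proposition and instead points to \cite[Cor.~(5.3)]{DemL2Van} for the manifold case and to \cite[Thm.~4.4]{TianXu} for the orbifold adaptation. Your reconstruction --- the Bochner--Kodaira--Nakano a priori estimate assembled from $G_i$-invariant local data, the Andreotti--Vesentini density argument using completeness, Hilbert-space duality, and regularization plus weak limits for the singular weight --- is exactly the standard H\"ormander--Demailly route those references implement, so your approach agrees with the paper's.
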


\section{Maximal compact subspaces of orbifold universal covers}
\label{sec:maxsubsp}

This section is merely a translation of~\cite[Sec.~4]{TakayamaNonvanishing} to the orbifold case. Following~\cite[Sec.~3B]{TakayamaNonvanishing}, for any complex analytic space $X$, by a \emph{subvariety} $W \subseteq X$, we mean an irreducible reduced complex subspace. By a \emph{maximal compact subspace} $Z \subseteq X$, we mean a not necessarily reduced nor irreducible compact subspace, such that every subvariety $W \subseteq X$ with $Z \cap W \neq \emptyset$ is contained in $Z$.

We have the following (compare~\cite[Prop.~4.1]{TakayamaNonvanishing}).

\begin{proposition}
\label{prop:compsubs}
Let $\orbi{X}=(X,\Delta)$ be a smooth complex compact orbifold and $\widetilde{\orbi{X}}$ its orbifold universal cover with underlying space denoted by $\widetilde{X}$. Let $(L,h)$ be a positive Hermitian line orbibundle on $\orbi{X}$, such that $(\orbi{X},\omega=i \Theta(L,h))$ becomes a complete K\"ahler orbifold. Denote the pullbacks of $L$, $H$,  and $\omega$ by $\widetilde{L}$, $\widetilde{h}$ , and $\widetilde{\omega}$ respectively. Let moreover $Z\subseteq \widetilde{X}$ be a connected maximal compact subspace and $N \in \ZZ_{\geq 1}$.  Then there exists  a singular Hermitian metric $H$ on $\widetilde{L}$ with the following properties:
\begin{enumerate}
\item $i\Theta(\widetilde{L},H) \geq (1-1/N)\widetilde{\omega}$ as currents.
\item There exists an open neighbourhood $U$ of $Z$, such that  
$$
U \cap \supp (\mathcal{O}_{\widetilde{X}}/\mathcal{I}_{\widetilde{X}}(H))= Z
$$
 and $\left.\mathcal{I}_{\widetilde{X}}(H)\right|_{U} \subset \left.\mathcal{I}_{Z}\right|_{U}$.
\item  There exists a positive constant $c_0$, such that $h \leq c_0 H$.
\end{enumerate}
\end{proposition}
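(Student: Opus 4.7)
The plan is to adapt the construction of~\cite[Prop.~4.1]{TakayamaNonvanishing} from the manifold to the orbifold setting: $H$ will be built as the twist $H = \widetilde{h}\cdot e^{-\phi}$ of the smooth Hermitian orbimetric $\widetilde{h}$ by a plurisubharmonic weight $\phi$ assembled from global orbisections of a high power of $L$ that vanish on $Z_0 := \pi(Z) \subset X$.

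First I would analyze $Z_0$. Since $\pi$ is proper on the compact $Z$, the set $Z_0$ is a compact analytic subspace of $X$. By proper discontinuity of the $\pi_1(\orbi{X})$-action on $\widetilde{X}$ together with compactness of $Z$, every connected component of $\pi^{-1}(Z_0)$ other than $Z$ stays a positive distance from $Z$, so I can fix a relatively compact open neighbourhood $U$ of $Z$ meeting no other component of $\pi^{-1}(Z_0)$. Using that $L$ is positive on the compact orbifold $\orbi{X}$, for a given $m \in \ZZ_{\geq 1}$ (to be chosen large at the end) and $k \gg m$, the coherent orbisheaf $L^{\otimes k} \otimes \orbi{I}_{Z_0}^m$ is globally generated by finitely many orbisections $\sigma_1,\ldots,\sigma_r$ on a neighbourhood of $Z_0$. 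Their orbifold pullbacks $\widetilde{\sigma}_j := \pi^*\sigma_j$ are orbisections of $\widetilde{L}^{\otimes k}$ which vanish to order at least $m$ along every component of $\pi^{-1}(Z_0)$, in particular along $Z$.

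Then I set
$$
\phi := \frac{1}{Nk}\log \sum_{j=1}^{r} |\widetilde{\sigma}_j|^2_{\widetilde{h}^{\otimes k}}, \qquad H := \widetilde{h}\cdot e^{-\phi},
$$
and check the three items. For item (1), the standard Poincar\'e--Lelong inequality $i\del\delbar \log \sum_j |\widetilde{\sigma}_j|^2_{\widetilde{h}^{\otimes k}} \geq -k\widetilde{\omega}$, valid on local uniformizations and hence on $\widetilde{\orbi{X}}$, gives $i\del\delbar \phi \geq -\tfrac{1}{N}\widetilde{\omega}$ and therefore
$$
i\Theta(\widetilde{L},H) = \widetilde{\omega} + i\del\delbar \phi \geq (1-1/N)\widetilde{\omega}.
$$
For item (3), the continuous function $\sum_j |\sigma_j|^2_{h^{\otimes k}}$ is bounded on the compact $X$ by some $M < \infty$; since $\widetilde{h}$ and $\widetilde{\sigma}_j$ are pullbacks, the same bound holds on all of $\widetilde{X}$, so $\phi \leq \tfrac{\log M}{Nk}$ globally, giving $\widetilde{h} \leq c_0 H$ with $c_0 := M^{1/(Nk)}$.

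The main obstacle is item (2). I need to choose $m$ large enough that $\orbi{I}_{\widetilde{X}}(H) = \orbi{I}_{\widetilde{X}}(\phi) \subseteq \orbi{I}_Z$ on $U$: working in a local uniformization around a point of $Z$, local integrability of $|f|^2 e^{-\phi}$ against the Lebesgue measure forces $f$ to vanish on $Z$ as soon as $m$ dominates $N$ times the local codimension of $Z$ (this is a direct Skoda-type computation with powers of the local defining ideal of $Z$). Conversely, on $U \setminus Z$ the weight $\phi$ is locally bounded, since our choice of $U$ excludes every other component of $\pi^{-1}(Z_0)$ where the $\widetilde{\sigma}_j$ could jointly vanish; hence $\orbi{I}_{\widetilde{X}}(H)$ equals the full structure orbisheaf on $U \setminus Z$, so $U \cap \supp(\orbi{O}_{\widetilde{X}}/\orbi{I}_{\widetilde{X}}(H)) = Z$. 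All these local-analytic computations descend from local uniformizations via the exact functor of $G$-invariants, exactly as in Nadel's coherence theorem for orbifolds, so the whole argument transfers from~\cite{TakayamaNonvanishing} to the orbifold setting without substantial change.
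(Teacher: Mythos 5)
Your approach — pulling back sections from $X$ and twisting $\widetilde{h}$ by the weight $\phi = \tfrac{1}{Nk}\log\sum_j|\widetilde\sigma_j|^2$ — is genuinely different from the paper's. The paper never descends to $X$: Lemmata~\ref{le:firstle}--\ref{le:thirdle} construct \emph{multivalued $L^2$-sections} $s=\{s_i\}$ of $\widetilde L^{\otimes 1/N}$ directly on the noncompact cover $\widetilde X$ via H\"ormander-type estimates (Proposition~\ref{prop:delbarest}), and then sets $H=h^{\otimes(1-1/N)}\tfrac{h^{\otimes 1/N}}{|s|^2}$. The difference is not cosmetic; it is exactly what makes property (2) attainable for \emph{every} $N$.

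Here is the gap in your argument. In property (2) you need the weight to be singular enough along $Z$ that $\orbi I_{\widetilde X}(\phi)\subseteq \orbi I_Z$. If the $\sigma_j$ vanish to order $m$ along $Z_0=\pi(Z)$, then near $Z$ one has $\sum_j|\widetilde\sigma_j|^2\sim \mathrm{dist}(\cdot,Z)^{2m}$ and hence $e^{-\phi}\sim\mathrm{dist}(\cdot,Z)^{-2m/(Nk)}$. The Skoda-type integrability criterion therefore requires the \emph{ratio} $m/(Nk)$ to be bounded below by a constant $c(Z)>0$ (roughly the codimension of $Z$, or more precisely the quantity that the paper encodes as the integer $q$ at the start of the proof of Lemma~\ref{le:thirdle}), i.e.\ $m\gtrsim Nk\,c(Z)$ — not "$m$ dominates $N$ times the local codimension" as you wrote; you dropped the factor of $k$. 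Since you chose $k\gg m$ to secure global generation of $L^{\otimes k}\otimes\orbi I_{Z_0}^m$ on the compact $X$, the ratio $m/(Nk)$ tends to zero, and the multiplier ideal becomes trivial near $Z$. Moreover the problem is not just the chosen parameter regime: global generation (indeed even nonvanishing of $H^0$) of $L^{\otimes k}\otimes\orbi I_{Z_0}^m$ on the compact $X$ forces $m/k$ to stay below a finite threshold of Seshadri-constant type depending only on $L$ and $Z_0$; hence $m/(Nk)$ is bounded above by $\varepsilon(L,Z_0)/N$, which for $N$ large will always drop below $c(Z)$. Replacing $L$ by a higher power of itself does not help, because both the curvature normalization and the Seshadri bound rescale in the same way. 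So your construction cannot satisfy (1) and (2) simultaneously once $N$ exceeds $\varepsilon(L,Z_0)/c(Z)$, whereas the proposition requires \emph{all} $N\in\ZZ_{\geq 1}$.

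The paper's device for escaping this constraint is twofold and you should look at it closely. First, the sections $s_i'$ in Lemma~\ref{le:thirdle} are produced by $L^2$-estimates on the (noncompact, when $\pi_1$ is infinite) cover $\widetilde X$; on a noncompact manifold of bounded geometry these estimates (Lemma~\ref{le:firstle}) furnish sections with arbitrary prescribed vanishing at discrete families of points without any Seshadri-type obstruction. Second, the weight is built from \emph{multivalued} sections: $s_i=(s_i')^{1/(m_iN)}$ where $s_i'\in H^0_{(2)}(\widetilde X,\widetilde L^{\otimes m_i}\otimes\orbi I_{\mathrm{red}\,Z}^{m_iqN})$. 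The fractional exponent $1/(m_iN)$ both normalizes the Poincar\'e--Lelong negativity to exactly $-\tfrac1N\widetilde\omega$ \emph{and} normalizes the vanishing order along $Z$ to exactly $q$, independently of how large $m_i$ has to be taken to make the $L^2$-construction go through. This decoupling of the curvature budget from the vanishing order is precisely what your pullback construction lacks. Items (1) and (3) in your write-up are fine as computations, and the observation that a small $U$ separates $Z$ from the other components of $\pi^{-1}(Z_0)$ is correct, but without an $L^2$-extension argument on $\widetilde X$ your item (2) cannot be repaired.
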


To prove the proposition, we need the following three lemmata.

\begin{lemma}
\label{le:firstle}
Let $\orbi{X}$, $\widetilde{\orbi{X}}$, and $(L,h)$ be as in Proposition~\ref{prop:compsubs}. Let $\{x_i\}_{i \in \NN}$ be a discrete sequence of points in $\widetilde{X}$ with no accumulation point. Then there exists a subsequence $\{x_{i_k}\}_{k \in \NN}$ and a positive integer $m_0$, such that for any $m \in \ZZ_{>m_0}$ and  $\ell\in \ZZ_{>0}$, the evaluation map
$$
H^0_{(2)}\left(\widetilde{X},\widetilde{L}^{\otimes m}\right) \to \bigoplus_{k=1}^\ell \mathcal{O}_{\widetilde{X}} / \mathcal{M}_{\widetilde{X},x_{i_k}}
$$
is surjective.
\end{lemma}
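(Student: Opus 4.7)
The plan is the classical $L^2$-Nadel method, transported to the orbifold universal cover. Given target values at $x_{i_1},\ldots,x_{i_\ell}$, I would build a smooth section $s$ of $\widetilde{L}^{\otimes m}$ realising them, then correct it into a holomorphic section by solving $\delbar u = \delbar s$ against a singular weight engineered to force $u$ to vanish at the prescribed points. The estimate needed is precisely Proposition~\ref{prop:delbarest}, applied on the complete K\"ahler orbifold $(\widetilde{\orbi{X}},\widetilde\omega)$ (completeness of $\widetilde\omega$ follows from Section~\ref{sec:orbicover}).

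First I would extract the subsequence: since $\{x_i\}$ has no accumulation point in $\widetilde{X}$, after passing to a subsequence $\{x_{i_k}\}$ and shrinking radii, one obtains pairwise disjoint orbifold charts $(U_k',G_{x_{i_k}},\pi_k,U_k)$ around $x_{i_k}$, with coordinates $w=(w_1,\ldots,w_n)$ on $U_k'$ centred at $\pi_k^{-1}(x_{i_k})=0$ and with $G_{x_{i_k}}$ acting unitarily. Define the $G_{x_{i_k}}$-invariant local weight $\phi_k' := 2n\chi_k \log|w|^2$, where $\chi_k$ is a radial cut-off equal to $1$ near $0$. It descends to a function on $U_k$ which I extend by zero to a global function $\phi$ on $\widetilde{X}$. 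Because the $\chi_k$ have disjoint supports, at every point $i\del\delbar\phi$ consists of a positive current (from $\log|w|^2$) plus a smooth cut-off error; by choosing the $\chi_k$ with uniformly bounded $C^2$-norm relative to $\widetilde\omega$, the negative part of $i\del\delbar\phi$ is bounded by $C\widetilde\omega$ for some constant $C$ depending only on the subsequence, not on $\ell$. Endow $\widetilde{L}^{\otimes m}$ with the singular Hermitian orbimetric $H := \widetilde h^{\otimes m} e^{-\phi}$. Since $i\Theta(\widetilde L,\widetilde h)=\widetilde\omega$, we get $i\Theta(\widetilde L^{\otimes m},H) = m\widetilde\omega + i\del\delbar\phi \geq (m-C)\widetilde\omega$; setting $m_0 := C$, for every $m > m_0$ this is $\geq \widetilde\omega$ as currents.

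Next, given target values $(a_1,\ldots,a_\ell)$, choose $G_{x_{i_k}}$-equivariant local frames of $\widetilde L^{\otimes m}$ on $U_k'$ and local holomorphic sections $s_k$ with $s_k(0)$ representing $a_k$, and set $s := \sum_k \chi_k s_k$, extended by zero. Then $g := \delbar s = \sum_k (\delbar\chi_k)s_k$ is supported in the annuli where $\chi_k$ is nonconstant, away from the $x_{i_k}$, so $e^{-\phi}$ is bounded on $\supp(g)$ and $g \in L^2_{0,1}(\widetilde X,\widetilde L^{\otimes m})$ with respect to $H$. Applying Proposition~\ref{prop:delbarest} with $q=1$ yields $u \in L^2_{0,0}(\widetilde X,\widetilde L^{\otimes m})$ with $\delbar u = g$ and $\int_{\widetilde X}|u|_H^2\,\dif V_{\widetilde\omega} < \infty$. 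Near $x_{i_k}$, $\phi$ has the logarithmic pole $2n\log|w|^2$, i.e.\ $e^{-\phi}\sim|w|^{-4n}$, which is non-integrable against any nonzero smooth function; the $L^2$-bound thus forces $u(x_{i_k})=0$. Hence $f := s - u$ is holomorphic, lies in $H^0_{(2)}(\widetilde X,\widetilde L^{\otimes m})$ (since $\phi\leq 0$ implies $L^2$ with respect to $H$ controls $L^2$ with respect to $\widetilde h^{\otimes m}$ off the singular locus of $\phi$, and smoothness near the singular points together with vanishing handles the rest), and satisfies $f(x_{i_k}) = a_k$ for all $k$, giving surjectivity.

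The main obstacle is orbifold bookkeeping: the cut-offs $\chi_k$, the sections $s_k$, and the function $|w|^2$ must all be $G_{x_{i_k}}$-invariant (resp.\ equivariant) so that $\phi$, $s$, and the solution $u$ descend to $\widetilde X$ and so that $L^2$-integrals can be read off from invariant data on $U_k'$ via the formalism of Section~\ref{sec:L2}. This is routine once one observes that $|w|^2$ is automatically $U(n)$-invariant and radial cut-offs are $G_{x_{i_k}}$-invariant once $G_{x_{i_k}} \subset U(n)$. A second point to monitor is that $m_0$ must be independent of $\ell$; this is ensured by the disjointness of the supports of the $\chi_k$, which makes $C$ a uniform bound on the negative part of $i\del\delbar\phi$ regardless of how many points are included. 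The hardest part will not be any single estimate, but verifying cleanly that the patched sections $s$ and $u$ are honest global sections of the orbibundle $\widetilde L^{\otimes m}$ on the possibly singular analytic space $\widetilde X$.
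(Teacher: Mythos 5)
Your overall strategy---a singular weight with logarithmic poles at the chosen points, supported in disjoint charts with uniformly controlled geometry, then $L^2$-solution of $\delbar u=\delbar s$ against that weight to correct a smooth extension into a holomorphic one---is exactly the paper's strategy, up to the usual repackaging: the paper encodes the correction step as vanishing of $H^1_{(2)}$ applied to the short exact sequence of the multiplier ideal sheaf, while you carry out the $\delbar$-correction by hand. These are equivalent, so the approach is right. The choice of log coefficient $2n$ rather than $n$ only enlarges the forced vanishing order and is harmless, and your appeal to ``uniformly bounded $C^2$-norm relative to $\widetilde\omega$'' is precisely the bounded-geometry property of the cover that the paper invokes from Claudon; it would be cleaner to cite it, but the content is the same.

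There is, however, a genuine gap in the choice of $m_0$. You set $m_0:=C$, where $C$ bounds only the negative part of $i\del\delbar\phi$, and then invoke Proposition~\ref{prop:delbarest} at bidegree $(0,1)$. But the $L^2$ estimate the proposition actually provides (it is imported from the Andreotti--Vesentini/Demailly theory) is the $(n,q)$-estimate: one must rewrite the problem for $(n,1)$-forms with values in $K_{\widetilde{\orbi{X}}}^{\otimes(-1)}\otimes\widetilde{L}^{\otimes m}$, and the positivity hypothesis is on the curvature of \emph{that} twisted bundle, not of $\widetilde{L}^{\otimes m}$ alone. Consequently the threshold must also absorb the curvature of $K_{\widetilde{\orbi{X}}}^{\otimes(-1)}$: one fixes $a_0$ with $i\del\delbar\log\widetilde\omega^n+a_0\widetilde\omega>0$, and $b_0$ with $-b_0\widetilde\omega<i\del\delbar\phi<b_0\widetilde\omega$, and takes $m_0:=a_0+b_0$. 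Without the $a_0$ term your curvature inequality $i\Theta\geq(m-C)\widetilde\omega$ concerns the wrong bundle, and Proposition~\ref{prop:delbarest} as the paper uses it does not apply at $(p,q)=(0,1)$ directly. The fix is routine (introduce $a_0$ and work with $K\otimes(K^{-1}\otimes\widetilde L^{\otimes m})$), but as written this step fails.
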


\begin{proof}
This is basically the proof of~\cite[Le.~4.2]{TakayamaNonvanishing} translated to the orbifold setting.
Since $\{x_i\}_{i \in \NN}$ has no accumulation point, we can take a subsequence, which by abuse of notation we again denote by $\{x_i\}_{i \in \NN}$, such that there exists $\epsilon >0$ with $\dist_{\omega}(x_i,x_j)> \epsilon \diam(X,\omega)$ for $i \neq j$. 

Now take $\epsilon \in \RR_{>0}$ and consider local uniformizations $(U_i'= B_\epsilon(0) \subseteq \CC^n,G_i,\varphi_i,U_i)$ around $x_i$, such that $\varphi_i(0)=x_i$ for any $i \in \NN$. By the \emph{bounded geometry} of $\widetilde{\orbi{X}}$ as orbifold cover of the compact orbifold $\orbi{X}$, compare~\cite[Le.~2.1]{claudon}, there is a constant $c \in \RR_{>0}$ such that for any $i$ and the standard metric $g_i$ on $U_i'$, we have
$$
\frac{1}{c}g_i < \omega < c g_i.
$$
Now take a smooth $U(n)$-invariant cutoff function $\rho:B_\epsilon(0) \to [0,1]$ with compact support satisfying $\rho \equiv 1$ on $B_{\epsilon/3}(0)$ and $B_{\epsilon}(0) \setminus B_{2\epsilon/3}(0)$. Define
$$
\phi:= \sum_{i \in \NN} n \rho(z) \log \sum_{j=1}^n \|z_j\|^2 \in L_\loc^1(\widetilde{\orbi{X}},\RR).
$$
It is obvious that $\phi$ is $U(n)$- and thus $G_i$-invariant for all $i \in \NN$. The multiplier ideal orbisheaf $\orbi{I}_X(\phi)$ defines a complex subspace of $X$, which is exactly $\{x_i\}_{i \in \NN}$. 

Now since $(\widetilde{L},\widetilde{h})$ is positive, there is $a_0 \in \ZZ_{>1}$, such that $i \del \delbar \log \widetilde{\omega}^n + a_0 \widetilde{\omega}$ is positive, that is $K_{\widetilde{\orbi{X}}}^{\otimes(-1)} \otimes \widetilde{L}^{\otimes a_0}$ is positive. Moreover, due to the definition of $\phi$ and the bounded geometry property from above, there is $b_0 \in \ZZ_{>1}$, such that $-b_0 \omega < i \del \delbar \phi < b_0 \omega$, see~\cite[Le.~2.3]{TakRems}. 
The space 
$$
H_{(2)}^1(X, \orbi{L}^2(L^{\otimes m},e^{-\phi}h^{\otimes m}))=H_{(2)}^{1}(X, K_{\widetilde{\orbi{X}}} \otimes K_{\widetilde{\orbi{X}}}^{\otimes(-1)} \otimes \widetilde{L}^{\otimes m} \otimes \orbi{I}_X(\phi))
$$ 
vanishes for every $m>m_0:=a_0+b_0$ due to Proposition~\ref{prop:delbarest}. This means that the map  
$$
H^0_{(2)}\left(\widetilde{X},\widetilde{L}^{\otimes m}\right) \to \bigoplus_{k=1}^\ell \mathcal{O}_{\widetilde{X}} / \mathcal{M}_{\widetilde{X},x_{i_k}}
$$
indeed is surjective for all $\ell\in \ZZ_{>0}$.
\end{proof}

\begin{lemma}
\label{le:secondle}
Let $\orbi{X}$, $\widetilde{\orbi{X}}$, and $(L,h)$ be as in Proposition~\ref{prop:compsubs}.
Let $Z \subseteq \widetilde{X}$ be a compact complex subspace, $Y \subseteq \widetilde{X}$ be a positive-dimensional non-compact subvariety, and $N$ a positive integer. Then there exist a positive integer $m$ and an $L^2$-section $s \in H^0_{(2)}\left(X,\widetilde{L}^{\otimes m} \otimes \mathcal{I}_Z^{mN}\right)$, such that $\left.s\right|_Y \neq 0$.
\end{lemma}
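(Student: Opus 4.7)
The plan is to adapt Takayama's strategy from~\cite[Le.~4.3]{TakayamaNonvanishing} to the orbifold setting, combining the $L^2$-estimate of Proposition~\ref{prop:delbarest} with a singular Hermitian weight $\phi$ whose multiplier ideal enforces both vanishing along $Z$ to order $mN$ and vanishing at test points in $Y\setminus Z$. First, since $Z$ is compact while $Y$ is non-compact and positive-dimensional, $Y$ cannot be contained in $Z$, so there exists a discrete sequence $\{y_k\}_{k \in \NN}$ of points of $Y \setminus Z$ with no accumulation in $\widetilde{X}$. Using the bounded geometry of $\widetilde{\orbi{X}}$ as in the proof of Lemma~\ref{le:firstle}, we may pass to a subsequence and fix pairwise disjoint orbifold charts around the $y_k$ with coordinates $z^{(k)}$ of uniform radius, all disjoint from a chosen relatively compact neighbourhood $W \supset Z$.

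Next I would build a quasi-psh weight $\phi = \phi_Z + \phi_Y$ on $\widetilde{X}$. For the $Z$-part, on $W$ choose local generators $f_1,\ldots,f_r$ of $\mathcal{I}_Z$ and a smooth cutoff $\rho_Z$ supported in $W$, and set $\phi_Z = (mN+n)\,\rho_Z(z)\log\bigl(\sum_i |f_i|^2\bigr)$; by the Skoda-type integrability computation this yields $\orbi{I}_X(\phi_Z) \subseteq \mathcal{I}_Z^{mN}$ in a neighbourhood of $Z$. For the $Y$-part, copying the construction in the proof of Lemma~\ref{le:firstle}, set
$$
\phi_Y = \sum_{k \in \NN} n\,\rho_k(z)\log \sum_{j=1}^n \|z_j^{(k)}\|^2,
$$
so that $\orbi{I}_X(\phi_Y)$ at each $y_k$ coincides with $\mathcal{M}_{\widetilde{X},y_k}$. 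Since the cutoff supports are pairwise disjoint and the $y_k$ lie outside $Z$, globally $\orbi{I}_X(\phi) \subseteq \mathcal{I}_Z^{mN}$ everywhere, and $\orbi{I}_X(\phi) = \mathcal{M}_{\widetilde{X},y_k}$ at each $y_k$ (where the $Z$-factor is trivial).

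By the bounded geometry of $\widetilde{\orbi{X}}$ and the positivity of $(\widetilde{L},\widetilde{h})$, there are constants $a_0,b_0 \in \ZZ_{>0}$ independent of $m$ with $K_{\widetilde{\orbi{X}}}^{\otimes(-1)} \otimes \widetilde{L}^{\otimes a_0}$ positive and $i\del\delbar\phi \geq -b_0\widetilde{\omega}$. Hence for all $m > m_0 := a_0 + b_0$, writing $\widetilde{L}^{\otimes m} = K_{\widetilde{\orbi{X}}} \otimes \bigl(K_{\widetilde{\orbi{X}}}^{\otimes(-1)} \otimes \widetilde{L}^{\otimes m}\bigr)$ and applying Proposition~\ref{prop:delbarest} to the singular Hermitian orbibundle $(\widetilde{L}^{\otimes m},\widetilde{h}^{\otimes m}e^{-\phi})$ yields vanishing of $L^2$-$\delbar$-cohomology in positive degree with values in $\widetilde{L}^{\otimes m} \otimes \orbi{I}_X(\phi)$. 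From the short exact sequence
$$
0 \to \widetilde{L}^{\otimes m} \otimes \orbi{I}_X(\phi) \to \widetilde{L}^{\otimes m} \otimes \mathcal{I}_Z^{mN} \to \bigoplus_{k} \orbi{O}_{\widetilde{X},y_k}/\mathcal{M}_{\widetilde{X},y_k} \to 0
$$
we conclude surjectivity of the associated evaluation map on $H^0_{(2)}$, so we obtain an $s \in H^0_{(2)}\bigl(\widetilde{X},\widetilde{L}^{\otimes m} \otimes \mathcal{I}_Z^{mN}\bigr)$ with $s(y_1) \neq 0$ and therefore $\left.s\right|_Y \neq 0$.

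The main technical obstacle is the simultaneous verification of two compatibility properties of $\phi$: (i) the Skoda-type inclusion $\orbi{I}_X(\phi_Z) \subseteq \mathcal{I}_Z^{mN}$ must survive in the orbifold local model, where the local uniformizing groups $G_i$ act on the generators $f_i$ of $\mathcal{I}_Z$ and one must descend from $G_i$-invariants on the uniformizations to the correct power of $\mathcal{I}_Z$ on $X$; and (ii) the lower bound $i\del\delbar\phi \geq -b_0\widetilde{\omega}$ must hold uniformly on the non-compact cover $\widetilde{X}$, which is precisely where the bounded geometry inherited from the compactness of $\orbi{X}$ enters in an essential way.
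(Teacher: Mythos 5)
Your proposal has a genuine gap in the curvature estimate. You take the weight $\phi_Z = (mN+n)\,\rho_Z \log\bigl(\sum_i |f_i|^2\bigr)$, whose coefficient grows linearly in $m$, and then claim that the lower bound $i\del\delbar\phi \geq -b_0\widetilde\omega$ holds with $b_0$ \emph{independent of $m$}. That is false: the negative curvature contribution of $\phi_Z$ comes from the support of $d\rho_Z$ (away from $Z$, where $\log\sum|f_i|^2$ is a bounded smooth function), and there it is of size $\sim (mN+n)\cdot C$ for some fixed constant $C>0$ depending on $\rho_Z$, the $f_i$, and the metric. So $b_0 = b_0(m)$ grows linearly, and the requirement $m > m_0 := a_0 + b_0(m)$ becomes $m(1 - CN) > a_0 + Cn$, which has no solution unless $CN < 1$ --- a condition over which you have no control, since $N$ is given in the statement and $C$ is dictated by the geometry. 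In short, you cannot manufacture vanishing to order $mN$ along $Z$ by a singular weight whose negative curvature must be absorbed by the positivity of $\widetilde L^{\otimes m}$, because both scale the same way in $m$.

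The paper circumvents this entirely: it only constructs a weight enforcing \emph{first-order} vanishing at the discrete sequence $\{x_i\}\subset Y$ (this is Lemma~\ref{le:firstle}, whose weight coefficient is $n$, independent of $m$, so the threshold $m_0$ there is honest). Vanishing along $Z$ is then obtained for free by linear algebra: since $Z$ is compact, the obstruction space $H^0\bigl(\widetilde X, \widetilde L^{\otimes m}\otimes \mathcal O/\mathcal I_Z^{mN}\bigr)$ is finite-dimensional, say of dimension $d$; choosing $\ell>d$ sections $s_1,\dots,s_\ell$ with $s_i(x_i)\neq 0$, $s_i(x_j)=0$ for $i\neq j$ (as Lemma~\ref{le:firstle} permits), some nontrivial linear combination $s$ dies in the obstruction space, i.e.\ $s\in H^0_{(2)}(\widetilde X, \widetilde L^{\otimes m}\otimes\mathcal I_Z^{mN})$, while $s(x_i)\neq 0$ for some $i$ forces $s|_Y\neq 0$. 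You should adopt this dimension-count instead of trying to encode the $Z$-vanishing into the weight.
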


\begin{proof}
Since $Y$ is non-compact, we can take a sequence of points $\{x_i\}_{i \in \NN}$ in $Y$ with no accumulation points in $\widetilde{X}$ (since $Y$ is closed). By Lemma~\ref{le:firstle}, we can take a subsequence, which again we denote by $\{x_i\}_{i \in \NN}$, such that there exists $m \in \NN$ with the  map $
H^0_{(2)}\left(X,\widetilde{L}^{\otimes m}\right) \to \bigoplus_{i=1}^\ell \mathcal{O}_X / \mathcal{M}_{X,x_{i}}
$ being surjective for all $\ell \in \NN$. We consider now the exact sequence
$$
 \xymatrix@C=1.5em{ 
0 \ar[r]
&
H^0_{(2)}\left(\widetilde{X}, \widetilde{L}^{\otimes m} \otimes \mathcal{I}_Z^{mN}\right) \ar[r]
&
H^0_{(2)}\left(\widetilde{X}, \widetilde{L}^{\otimes m}\right) \ar[r]
&
H^0\left(\widetilde{X}, \widetilde{L}^{\otimes m} \otimes \mathcal{O}_X / \mathcal{I}_Z^{mN}\right).}
$$
The last term  has dimension $d \in \ZZ_{\geq 0}$, since $Z$ is compact. Using Lemma~\ref{le:firstle}, we choose $\ell \in \ZZ_{>d}$ and $L^2$-sections $\{s_i\}_{i=1}^{\ell} \subset H^0_{(2)}\left(X, \widetilde{L}^{\otimes m}\right)$ such that $s_i(x_i) \neq 0$ and $s_i(x_j)=0$ for $1\leq i \neq j \leq \ell$. Since $\ell > d$, there is a linear combination $s$ of the $s_i$'s which is a  nontrivial $L^2$-section in $H^0_{(2)}\left(X, \widetilde{L}^{\otimes m} \otimes \mathcal{I}_Z^{mN}\right)$. Also $\left. s\right|_Y$ is not the zero section over $Y$ due to the choice of the $s_i$'s.
\end{proof}

Let $\alpha \in \QQ_{>0}$. Following~\cite{TakayamaNonvanishing}, by a \emph{multivalued $L^2$-section} of  $\widetilde{L}^{\otimes \alpha}$, we denote a section $s$ of $\widetilde{L}^{\otimes \alpha}$, such that there is $p \in \ZZ_{>0}$ with $p\alpha \in \ZZ$ and $s^p \in H_{(2)}^0(\widetilde{X},\widetilde{L}^{\otimes p\alpha})$. We can then define the pointwise length 
$$
\left|s\right|_{\widetilde{h}}:=\left(\widetilde{h}^{\otimes p\alpha}(s_i^p,s_i^p)\right)^{1/(2p)}
$$ 
and the zero locus $(s)_0=(s^p)_0$ of such sections. If $k \in \ZZ_{>0}$ and $s=\{s_i\}_{i=1,\ldots,k}$ is a finite number of multivalued $L^2$-sections of $L^{\otimes \alpha}$, we denote $\left|s\right|:= \sum_{i=1}^k \left|s_i\right|_h^2$ and $(s)_0:= \bigcap_{i=1}^k (s_i)_0$. Moreover, we define a multiplier ideal sheaf for $s$ by 
$$
\orbi{I}(s):=\orbi{L}\left(\orbi{O}_{\widetilde{\orbi{X}}},(\left|s\right|^2)^{-1}\right).
$$

By~\cite[Le.~2.4]{TakRems}, for $k \in \ZZ_{>0}$, the pointwise length of an $L^2$-section $s \in H_{(2)}^0(\widetilde{X},\widetilde{L}^{\otimes k})$ tends to zero at infinity. In particular, in the above setting, if $s^p \in H_{(2)}^0(\widetilde{X},\widetilde{L}^{\otimes p\alpha})$, then $s^q \in H_{(2)}^0(\widetilde{X},\widetilde{L}^{\otimes q\alpha})$ as well for $q \in \ZZ_{\geq p}$.

\begin{lemma}
\label{le:thirdle}
Let $\orbi{X}$, $\widetilde{\orbi{X}}$, and $(L,h)$ be as in Proposition~\ref{prop:compsubs}.
Let $Z \subseteq \widetilde{X}$ be a compact complex subspace, $U \subseteq \widetilde{X}$ a relatively compact open subset, and $N$ a positive integer. Then there is some $k \in \ZZ_{>0}$ and multivalued $L^2$-sections $s=\{s_i\}_{i=1,\ldots,k}$ of $\widetilde{L}^{\otimes 1/N}$ such that the following hold:
\begin{enumerate}
\item The set of common zeros $(s)_0$ of the $s_i$ has no non-compact irreducible component that intersects $U$. 
\item The multiplier ideal sheaf $\mathcal{I}(s)$ is contained in the ideal sheaf $\mathcal{I}_{Z}$.
\end{enumerate}
\end{lemma}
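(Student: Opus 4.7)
The plan is to construct $s$ by a finite iteration, applying Lemma~\ref{le:secondle} to eliminate non-compact irreducible components of the common zero set $(s)_0$ that meet $U$, inducting downward on their dimension. After the iteration, the multiplier ideal condition (2) is arranged by choosing the vanishing orders of the pieces along $Z$ sufficiently large. I may assume $\widetilde{X}$ is non-compact, since otherwise $\pi_1(\orbi{X})$ is already finite and there is nothing to prove.

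I initialize by applying Lemma~\ref{le:secondle} with $Y = \widetilde{X}$ to obtain a nonzero $\sigma_1 \in H^0_{(2)}(\widetilde{X},\widetilde{L}^{\otimes m_1} \otimes \mathcal{I}_Z^{m_1 N'})$, where $N'$ is a large auxiliary multiple of $N$ to be fixed at the end. The multivalued $m_1 N$-th root $s_1 := \sigma_1^{1/(m_1 N)}$ is an $L^2$-section of $\widetilde{L}^{\otimes 1/N}$ with $(s_1)_0 = (\sigma_1)_0$ a proper analytic subset of $\widetilde{X}$. Now suppose a collection $s = \{s_1,\ldots,s_j\}$ has been constructed and let $d$ be the largest dimension of a non-compact irreducible component of $(s)_0$ meeting $U$. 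Since $\overline{U}$ is compact and $(s)_0$ is analytic, $(s)_0 \cap \overline{U}$ is a compact analytic set, so only finitely many irreducible components of $(s)_0$ meet $U$; in particular there are only finitely many non-compact ones of dimension $d$, say $Y_1,\ldots,Y_r$. For each $\ell$, Lemma~\ref{le:secondle} applied to the pair $(Z,Y_\ell)$ produces $\tau_\ell \in H^0_{(2)}(\widetilde{X},\widetilde{L}^{\otimes n_\ell} \otimes \mathcal{I}_Z^{n_\ell N'})$ with $\tau_\ell|_{Y_\ell} \not\equiv 0$, and adjoining the multivalued root $\tau_\ell^{1/(n_\ell N)}$ to the collection replaces the component $Y_\ell$ by the proper analytic subset $Y_\ell \cap (\tau_\ell)_0$ of dimension strictly less than $d$. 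Iterating at most $\dim \widetilde{X}$ times yields (1).

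For (2), each building block $\sigma_i$ lies in $\mathcal{I}_Z^{m_i N'}$, and the multivalued $m_i N$-th root satisfies $|s_i|_{\widetilde{h}}^{\,2} \gtrsim d(\cdot,Z)^{2N'/N}$ in local uniformizations near points of $Z$. By choosing $N'/N$ large relative to the codimension of $Z$ (concretely, $N' = N \cdot \dim \widetilde{X}$ suffices), the sum $|s|^2 = \sum_i |s_i|_{\widetilde{h}}^{\,2}$ decays slowly enough along $Z$ that any holomorphic $g$ with $|g|^2/|s|^2 \in L^1_{\loc}$ near $Z$ must vanish on $Z$; this gives $\mathcal{I}(s) \subseteq \mathcal{I}_Z$. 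The orbifold setting affects nothing structurally: all $L^2$-estimates are carried out in local uniformizations $U_i'$ and the $G_i$-equivariance of $\sigma_i$ and $\tau_\ell$ is preserved throughout, as the sections are produced from orbisheaf cohomology via Lemma~\ref{le:secondle}.

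The main obstacle is calibrating the vanishing orders to secure (2) when $Z$ has codimension at least two: in that case the naive bound $|s_i|_{\widetilde{h}}^{\,2} \gtrsim d(\cdot,Z)^2$ is insufficient, and one must either invoke a Skoda-type estimate on the multiplier ideal of $\log \sum_i |s_i|_{\widetilde{h}}^{\,2}$ or augment $s$ by further Lemma~\ref{le:secondle}-sections whose common zero locus cuts out $Z$ scheme-theoretically near each of its points. The combinatorial reduction in the iterative step, by contrast, is routine once Lemma~\ref{le:secondle} is available.
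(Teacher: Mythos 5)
Your proof follows essentially the same route as the paper's: property (1) is obtained by iterating Lemma~\ref{le:secondle} to cut away non-compact irreducible components of the common zero locus meeting $U$ (you descend on dimension, the paper adds one section at a time, but both terminate because only finitely many components can meet the compact set $\overline U$), and property (2) is arranged by drawing each building block from $H^0_{(2)}$ of a high twist $\widetilde L^{\otimes m}\otimes\mathcal I_{\red Z}^{mqN}$. The calibration issue you flag at the end is handled in the paper by the single unjustified assertion that a suitable uniform integer $q$ exists, crucially phrased in terms of $\mathcal I_{\red Z}$ since $Z$ may carry a non-reduced scheme structure; your $N'=N\dim\widetilde X$ heuristic is precisely the reduced-$Z$ instance of that $q$, so on this point you and the source are on comparable footing, with you at least naming the gap.
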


\begin{proof}
First, note that there exists a positive integer $q$, such that, for every $m \in \ZZ_{>0}$, we have 
$$
\orbi{L}\left(\orbi{O}_{\widetilde{\orbi{X}}},(\left|s\right|^2)^{-1/(mN)}\right) \subset \mathcal{I}_Z
$$ 
for any set of sections $s=\{s_i\}_{i=1,\ldots,k} \subset H^0_{(2)}(X, L^{\otimes m} \otimes \mathcal{I}_{\red Z}^{mqN})$, where $\red Z$ is the reduced structure of $Z$. 

Fix such an integer $q$. By Lemma~\ref{le:secondle}, there is $m_1 \in \ZZ_{>0}$ and a nonzero $L^2$-section $s_1' \in H^0_{(2)}(X, \widetilde{L}^{\otimes m_1} \otimes \mathcal{I}_{\red Z}^{m_1qN})$. We set $s_1:={s'_1}^{1/(m_1N)}$, which is a multivalued section of $L^{\otimes 1/N} \otimes \mathcal{I}_{\red Z}^{q}$. Now if there is no non-compact irreducible component $Y$ of $(s_1)_0$ intersecting $U$, set $s:=\{s_1\}$. If there \emph{is} a non-compact irreducible component $Y$ of $(s_1)_0$ intersecting $U$, then apply Lemma~\ref{le:secondle} for $Z$ and this $Y$. It follows that there is an $L^2$-section $s_2$ of $L^{\otimes 1/N}$ such that $s_2^{m_2N} \in H^0_{(2)}(X, L^{\otimes m_2} \otimes \mathcal{I}_{\red Z}^{m_2qN})$ for a positive integer $m_2$, such that $\left.s_2\right|_Y$ is not the zero section. Now we pass to $(s_1)_0 \cap (s_2)_0$ and check if there is a non-compact irreducible component intersecting $U$. If yes, proceed again with Lemma~\ref{le:secondle} to obtain a section $s_3$ et cetera. Since $U$ is relatively compact, after a finite number of steps, we have $L^2$-sections $s_1,\ldots,s_k \in \widetilde{L}^{\otimes 1/N}$ satisfying the requirements of the lemma. 
\end{proof}

\begin{proof}[Proof of Proposition~\ref{prop:compsubs}]
First let $U$ be a relatively compact open neighbourhood of the connected maximal compact subspace $Z$. Apply Lemma~\ref{le:thirdle} on this $U$ and $Z$, $N$ from the proposition. We use the $L^2$-sections $s=\{s_i\}_{i=1}^{m}$ of $\widetilde{L}^{\otimes 1/N}$ from the lemma to construct a singular Hermitian metric 
$$
H:=h^{\otimes (1-1/N)} \frac{h^{\otimes 1/N}}{\left| s \right|^2}
$$
of $\widetilde{L}$, having the properties:
\begin{enumerate}
\item $i\Theta(\widetilde{L},H) = i\Theta(\widetilde{L}^{\otimes (1-1/N)}, h^{\otimes (1-1/N)}) + i\Theta(h^{\otimes 1/N}(\left| s \right|^2)^{-1}) \geq (1-1/N)\widetilde{\omega}$.
\item $\mathcal{I}_{\widetilde{X}}(H) =\mathcal{I}(s)$, so $\left.\mathcal{I}(H)\right|_U$ defines a compact complex subspace of $U$ containing $Z$.
\item There is an upper bound for $\left|s\right|^2$, since by~\cite[Le.~2.4]{TakRems}, $\left|s\right|$ tends to zero  at infinity. So there is a positive constant such that $h \leq c_0 H$.
\end{enumerate} 
Since $Z$ is a \emph{maximal} compact subspace, we have $
U \cap \supp (\mathcal{O}_{\widetilde{X}}/\mathcal{I}_{\widetilde{X}}(H))= Z
$, and the proposition is proven
\end{proof}

\section{Proof of Theorem~\ref{thm:loctoglob}}
\label{sec:loctoglob}

In this section we prove Theorem~\ref{thm:loctoglob}, which makes up the local-to-global part of the induction in the proof of our main theorems. First we recall the necessary definitions.

\subsection*{Definitions}
We call a pair $(Y,D)$ of a normal complex algebraic variety $Y$ and an effective $\QQ$-divisor $D=\sum d_j D_j$ on $Y$ with $K_Y+D$ being $\QQ$-Cartier a \emph{log pair}.
For a log pair $(Y,D)$, in the following we will often decompose $D=D'+D''$, where $D''\geq 0$, and $D'=\sum(1-1/m_i)D_i$  is a sum of prime divisors $D_i$, whose coefficients satisfy $m_i \in \ZZ_{>1}$.

We say that a birational divisorial contraction $f: X \to Y$ is a \emph{log resolution} of the pair $(Y,D)$, if $X$ is smooth and  $f_*^{-1}\supp(D) \cup E_1 \cup \ldots \cup E_k$ is a simple normal crossing divisor, where $E_i$, $i=1,\ldots,k$ are the $f$-exceptional prime divisors.

We call a log pair $(Y,D)$ \emph{Kawamata log terminal} or \emph{klt} shortly, if $0<d_i<1$ and there exists a  log resolution $f: X \to Y$, such that we can write
$$
K_X + f_*^{-1} (D)  + \sum E_i = f^*(K_Y + D) + \sum a_i E_i,
$$
where the $a_i$, which we call \emph{log-discrepancies}, are greater than zero. Note that $f_*^{-1} (D)=\sum d_i f_*^{-1} (D_i)$.

We call a projective variety $Y$ \emph{weakly Fano}, if there exists an effective $\QQ$-divisor $D=\sum d_j D_j$ on $Y$, such that $(Y,D)$ is klt and $-(K_Y+D)$ is big and nef. 

The statement of Theorem~\ref{thm:loctoglob} to prove now is the following: assume that $n$-dimensional klt-singularities have \emph{finite regional fundamental group}, then $n$-dimensional weakly Fano pairs $(Y,D'+D'')$ have \emph{finite orbifold fundamental group} $\pi_1(Y_\sm,D')$.

\subsection*{Compact orbifolds supported on a log resolution}
The proof of the above statement relies on the following two propositions, which essentially say that for a log resolution $f:X \to Y$ of a weakly Fano pair $(Y,D'+D'')$ with exceptional divisor $E$, for any admissible $\QQ$-divisor $\Delta$ supported on $\supp(E) \cup \supp(f_*^{-1} D')$, the smooth geometric orbifold $(X,\Delta)$ has finite fundamental group.  

\begin{proposition}
\label{prop:nontrivsec}
Let $(Y,D'+D'')$ be a weakly Fano pair, with $D'=\sum (1-1/e_j)D_j$ for some $e_j \in \ZZ_{>1}$ and $D'' \geq 0$. Let $f:X \to Y$ be a log resolution with exceptional prime divisors $E_1,\ldots,E_k$. For arbitrary $m_i \in \ZZ_{>0}$, consider the smooth geometric orbifold $\orbi{X}:=(X,\Delta:=f_*^{-1} D' + \sum (1-1/m_i)E_i)$. Define a divisor 
$$
L:= -f^*(K_Y + D) - \sum_{-1 <c_i<0} c_i E_i + \sum_{0 \leq c_i} \left(\left\lceil c_i \right\rceil - c_i \right) E_i + f_*^{-1}D'',
$$
where $c_i:=a_i-1/m_i$ and the $a_i > 0$ are the log-discrepancies. Consider $L$ as an orbibundle on $\orbi{X}$. Then the orbifold universal cover $\pi:\widetilde{\orbi{X}} \to \orbi{X}$ has a nontrivial $L^2$-section
$$
\nu \in H_{(2)}^0(\widetilde{\orbi{X}},K_{\widetilde{\orbi{X}}} \otimes \pi^*L).
$$
\end{proposition}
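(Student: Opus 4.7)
The plan is to construct $\nu$ by a H\"ormander-type $\bar\partial$-argument on $\widetilde{\orbi{X}}$, adapting Takayama's approach~\cite{TakayamaNonvanishing} to the orbifold setting and using the tools assembled in Sections~\ref{sec:orbimetrics}--\ref{sec:maxsubsp}. The first step is to identify a canonical nontrivial section of $K_\orbi{X} \otimes L$ on the underlying space. Combining the log-discrepancy equation with $K_\orbi{X} = K_X + f_*^{-1}D' + \sum(1-1/m_i)E_i$ gives
$$
K_\orbi{X} = f^*(K_Y+D) - f_*^{-1}D'' + \sum_i c_i E_i.
$$
Adding $L$ makes the $f^*(K_Y+D)$ and $f_*^{-1}D''$ terms cancel, and the exceptional contributions telescope to $K_\orbi{X} + L = \sum_{c_i \geq 0} \lceil c_i \rceil E_i$, an effective integer divisor on $X$. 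This yields a canonical nonzero section $s_0 \in H^0(X, K_\orbi{X} \otimes L)$ vanishing exactly on $\supp\bigl(\sum_{c_i \geq 0}\lceil c_i\rceil E_i\bigr)$.

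Next, I would equip $L$ with a singular Hermitian orbimetric $h_L$ of strictly positive curvature. Because $-(K_Y+D)$ is big and nef, a Kodaira-type trick yields $-(K_Y+D) \sim_\QQ A + E_0$ with $A$ ample on $Y$ and $E_0 \geq 0$. Then $h_L$ is assembled from the smooth positive metric on $f^*A$, from singular defining-section metrics on $f^*E_0$, on $f_*^{-1}D''$ and on the effective $\QQ$-divisor $-\sum_{c_i<0} c_i E_i + \sum_{c_i \geq 0}(\lceil c_i \rceil - c_i) E_i$, and from the Claudon-type orbifold correction of Proposition~\ref{prop:kaehlerorbi} along the orbifold boundary $\Delta$. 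Balancing the contributions, one obtains $i\Theta(L, h_L) \geq c\,\omega'$ for some K\"ahler orbiform $\omega'$ on $\orbi{X}$ and some $c>0$; pulling back along $\pi$ makes $(\widetilde{\orbi{X}}, \widetilde{\omega}')$ into a complete K\"ahler orbifold via the orbifold Hopf-Rinow theorem recalled in Section~\ref{sec:orbicover}.

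The heart of the argument is then a localized $\bar\partial$-correction of the pullback of $s_0$. Choose a connected maximal compact subspace $Z \subseteq \widetilde{X}$ containing a point lying outside $\pi^{-1}\supp\bigl(\sum_{c_i \geq 0}\lceil c_i\rceil E_i\bigr)$; such a $Z$ exists by a Zorn's-lemma argument over connected compact subvarieties through any such point. Apply Proposition~\ref{prop:compsubs} to get a singular orbimetric $H$ on $\widetilde{L}$ with $i\Theta(\widetilde{L}, H) \geq (1-1/N)\widetilde{\omega}'$, with $\widetilde{h}_L \leq c_0 H$, and with $\mathcal{O}_{\widetilde{X}}/\mathcal{I}_{\widetilde{X}}(H)$ supported on $Z$ inside some neighborhood $U$ of $Z$. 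Let $\chi$ be a smooth cutoff equal to $1$ near $Z$ and compactly supported in $U$. Then $g := \bar\partial(\chi\cdot\pi^* s_0) = (\bar\partial \chi)\wedge\pi^* s_0$ is a compactly supported, hence $L^2$, $(n,1)$-form with values in $\widetilde{L}$. Proposition~\ref{prop:delbarest} supplies an $L^2$-primitive $u$ with $\bar\partial u = g$, so that $\nu := \chi\cdot\pi^* s_0 - u$ is a holomorphic $L^2$-section of $K_{\widetilde{\orbi{X}}} \otimes \pi^* L$. Nontriviality follows because the multiplier-ideal bound $\mathcal{I}_{\widetilde{X}}(H) \subseteq \mathcal{I}_Z$ near $Z$ forces $u|_Z \equiv 0$, whereas $\chi\cdot\pi^* s_0|_Z = \pi^* s_0|_Z \not\equiv 0$ by the choice of $Z$.

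The main obstacle I expect is the construction of the positive singular orbimetric $h_L$ in the second step. The non-ampleness of $-(K_Y+D)$ (only big and nef), combined with the mixed contributions from $D''$, the orbifold boundary $D'$, and the $\QQ$-effective exceptional corrections packaged into $L$, demands a careful balancing so that the curvature estimate $i\Theta(L, h_L) \geq c\,\omega'$ holds uniformly against a genuine K\"ahler orbiform (and not merely semipositively), since this is exactly the prerequisite for invoking the $L^2$-$\bar\partial$-estimate of Proposition~\ref{prop:delbarest}.
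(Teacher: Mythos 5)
Your overall architecture matches the paper's: the telescoping identity $K_\orbi{X}+L = \sum_{c_i\geq 0}\lceil c_i\rceil E_i =: E$ producing an effective divisor and a canonical section, a smooth cutoff near a compact subvariety of $\widetilde{\orbi{X}}$, a $\bar\partial$-correction via Proposition~\ref{prop:delbarest}, and nontriviality from the multiplier ideal of the singular metric forcing the correction term to vanish on the compact subspace. The two places where your plan diverges from the actual proof, however, are precisely the two places where it breaks.

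First, and most seriously, the existence of a connected maximal compact subspace $Z\subseteq\widetilde{X}$ through a given point does not follow from Zorn's lemma. The union of a chain of connected compact subvarieties containing a fixed point has no reason to be compact; there is no a priori upper bound that makes the poset inductive. Producing maximal compact subspaces on the orbifold universal cover is exactly the content of Claudon's orbifold $\Gamma$-reduction (Theorem~\ref{thm:G-red}): a very general fiber $F$ of $\gamma\colon\orbi{X}\dasharrow\Gamma(\orbi{X})$ is a maximal subvariety with finite image of $\pi_1$, and a connected component $\widetilde{F}$ of $\pi^{-1}(F)$ is then a maximal compact subspace of $\widetilde{X}$. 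Moreover, the paper needs the relatively compact neighbourhood $\widetilde{U}\cong\widetilde{F}\times U$ coming from the local triviality of the fibration in Theorem~\ref{thm:G-red}(3), which your construction does not provide. This is not a cosmetic difference: the $\Gamma$-reduction is one of the two structural inputs Takayama's method is built on, and without it there is nothing to anchor the cutoff.

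Second, your plan to build a strictly positive singular Hermitian orbimetric $h_L$ on $L$ directly and then feed it into Proposition~\ref{prop:compsubs} does not go through. Proposition~\ref{prop:compsubs} (and Lemmata~\ref{le:firstle}--\ref{le:thirdle} on which it rests) requires a \emph{smooth} positive Hermitian orbimetric $h$ on a line orbibundle, so that $\omega=i\Theta(L,h)$ is a genuine K\"ahler orbiform and the bounded-geometry and Poincar\'e-series arguments apply. With a singular $h_L$ you have neither a K\"ahler orbiform nor the estimate $h\leq c_0 H$ in item~(3) of Proposition~\ref{prop:compsubs}. The paper avoids this by decomposing $L=A+\Delta''$ with $A$ an ample $\QQ$-divisor and $\Delta''$ a simple normal crossing $\QQ$-divisor with coefficients in $(0,1)$ such that $(X,\Delta'')$ is klt; it then uses Proposition~\ref{prop:kaehlerorbi} to endow $A^{\otimes a}$ with a smooth positive orbimetric, runs Lemma~\ref{le:thirdle} against $\widetilde{A}^{\otimes a/N}$ to get the multivalued sections $s$, and defines the singular metric
$$
H_s := \widetilde{h_{A^{\otimes a}}}^{\otimes (1/a-1/N)} \cdot \frac{\widetilde{h_{A^{\otimes a}}}^{\otimes 1/N}}{|s|^2}\cdot\frac{\widetilde{h_{m\Delta''}}^{\otimes 1/m}}{\widetilde{|\Delta''|}^2}
$$
on $\widetilde{L}$. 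The crucial point is that klt-ness of $(X,\Delta'')$ makes $\orbi{L}\bigl(\orbi{O}_{\widetilde{\orbi{X}}},\widetilde{|\Delta''|}^{-2}\bigr)=\orbi{O}_{\widetilde{\orbi{X}}}$, so the $\Delta''$-factor contributes nothing to the multiplier ideal $\mathcal{I}(H_s)=\mathcal{I}(s)$. You flagged the ``balancing'' of curvature contributions as the obstacle; the resolution is not a clever metric on $L$ itself but this ample-plus-klt decomposition, which you would need to supply for the argument to close.
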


\begin{proof}
Consider a pair $(Y,D'+D'')$, a log resolution $f:X \to Y$ and arbitrary $m_i \in \ZZ_{>0}$  as in the proposition. Set $\Delta:=f_*^{-1} D' + \sum (1-1/m_i)E_i$. Consider the smooth geometric projective orbifold $\orbi{X}=(X,\Delta)$. Then the orbifold canonical divisor of $\orbi{X}$ is defined by $K_{\orbi{X}}:= K_X +\Delta$, see Section~\ref{sec:orbibundles}. By the above ramification formula, we can write
$$
K_{\orbi{X}} +  f_*^{-1}D''   = f^*(K_Y + D) + \sum c_i E_i ,
$$
where $c_i:=a_i-1/m_i>-1$ and $-f^*(K_Y + D)$ is big and nef. Now define
$$
\Delta':=\sum_{-1 < c_i <0} (-c_i) E_i + \sum_{0 \leq c_i } (\left\lceil c_i \right\rceil -c_i)E_i + f_*^{-1}D'' ,
\quad
E:=\sum_{0 \leq c_i } \left\lceil c_i \right\rceil E_i, 
$$
and $L:= -f^*(K_Y + D) + \Delta'$. With these definitions, the ramification formula becomes
$
K_{\orbi{X}} + L = E
$.
Now since $L$ is the sum of a big and nef and a simple normal crossing $\QQ$-divisor with coefficients strictly between $0$ and $1$, $L=A+\Delta''$, where $A$ is an ample $\QQ$-divisor and $\Delta''=\Delta'+N$, where $N$ is a very small effective $\QQ$-divisor. This means in particular that the pair $(X,\Delta'')$ is klt. 

Since $A$ is an ample $\QQ$-divisor, there is a positive integer $a$, such that, by Proposition~\ref{prop:kaehlerorbi}, $A^{\otimes a}$ is a positive line \emph{orbibundle} with orbimetric $h_A$.  Denote by $\omega:=i\Theta(A^{\otimes a},h_{A^{\otimes a}})$ the corresponding K\"ahler orbiform. Then $(\orbi{X}, \omega)$ is a compact K\"ahler orbifold.

Now following~\cite[Sec.~3B]{TakayamaNonvanishing}, take a multivalued canonical section $\sigma_{\Delta''}$ of $\orbi{O}_X(\Delta'')$, that is $m\Delta''$ is an integral effective divisor for some positive integer $m$, and $\divisor(\sigma_{\Delta''}^m)=m\Delta''$. Take in addition a Hermitian metric $h_{m\Delta''}$ of  $\orbi{O}_X(m\Delta'')$ and define a function $\left|\Delta''\right|:=\left|\sigma_{\Delta''}^m\right|_{h_{m\Delta''}}^{1/m}$. 
Since the pair $(X,\Delta'')$ is klt, we have
$$
\orbi{L}\left( \orbi{O}_{\orbi{X}},  \left|\Delta''\right|^{-2}\right)=\orbi{O}_{\orbi{X}}
$$
by~\cite[Def.~5.2.13]{sasakian}, compare also~\cite[Prop.~10.7]{KollarShaf}.

Now consider the orbifold universal cover $\pi \colon \widetilde{\orbi{X}} \to \orbi{X}$  of $\orbi{X}=(X,\Delta)$ and the $\Gamma$-reduction $\gamma\colon \orbi{X} \dasharrow \Gamma(\orbi{X})$ from Theorem~\ref{thm:G-red}. Let $F$ be a very general fiber of the restriction $\left. \gamma\right|_{X^0} \colon X^0 \to \Gamma(\orbi{X})^0$. The preimage $\pi^{-1}(F)$ is a disjoint countable union of copies of a finite cover of $F$. So the restriction of $\pi$ to a connected component $\widetilde{F}$ of this preimage is a finite cover $\left.\pi\right|_{\widetilde{F}}\colon \widetilde{F} \to F$, \'etale over $F \setminus \supp(\Delta)$. 

Denote by $\widetilde{L}$, $\widetilde{A}$, and $\widetilde{\Delta''}$  the pullbacks (as orbibundles) of $L$, $A$, and $\Delta''$ by $\pi$  respectively. In the same manner, denote the pullback of functions, orbimetrics, and orbiforms by a tilde. In particular, $(\widetilde{\orbi{X}},\widetilde{\omega}:=\pi^*\omega)$ is a complete K\"ahler orbifold.

If $\gamma(F) \subset U \subset \Gamma(\orbi{X})^0$ is a sufficiently small neighbourhood of $\gamma(F)$ biholomorphic to the unit ball $B_1(0) \subseteq \CC^n$, then the connected component $\widetilde{U}$ of $\pi^{-1} \circ \gamma^{-1}(U)$ containing $\widetilde{F}$ is a relatively compact open neighbourhood of $\widetilde{F}$ biholomorphic to $\widetilde{F} \times U$.

Since $E$ is effective, there is a nonzero section $\sigma \in H^0(X,E) \cong H^0(X,K_{\orbi{X}} \otimes L)$. We can pull back $\left. \sigma \right|_{\gamma^{-1}(U)}$ via $\left.\pi\right|_{\widetilde{F}}$ to get a section $\widetilde{\sigma} \in H^0(\widetilde{U}, K_{\widetilde{X}}\times \widetilde{L})$. Let $\rho:U \to [0,1]$ be a smooth cutoff function with $\rho \equiv 1$ on a neighbourhood of $\gamma(F)$. Then $(\rho \circ \gamma \circ \pi) \cdot \widetilde{\sigma}$ is a smooth $\widetilde{L}$-valued $(n,0)$-form on $\widetilde{\orbi{X}}$.

Take a positive integer $N>a$. By Lemma~\ref{le:thirdle}, we have $k \in \ZZ_{>0}$ and multivalued $L^2$-sections $s=\{s_i\}_{i=1,\ldots,k}$ of $\widetilde{A}^{\otimes a/N}$, such that their set of zeros $\widetilde{U} \cap (s)_0$ is compact and $\orbi{I}(s)$ is contained in the ideal sheaf $\orbi{I}_{\widetilde{F}}$. By shrinking $U$ if necessary, we can assume that $\widetilde{U} \cap (s)_0 =\widetilde{F}$.

Now we want to define a singular Hermitian metric on  $\widetilde{L}=\widetilde{A} \otimes \widetilde{\Delta''}$. Recall that we have a Hermitian metric $h_{m\Delta''}$ of the line bundle $\Delta''^{\otimes m}$. Define
$$
H_s:=  \widetilde{h_{A^{\otimes a}}}^{\otimes 1/a-1/N} \times  \frac{ \widetilde{h_{A^{\otimes a}}}^{\otimes 1/N}}{\left|s\right|^2} \times \frac{ \widetilde{h_{m\Delta''}}^{\otimes 1/m}}{\widetilde{\left|\Delta''\right|}^2}.
$$
This is a singular Hermitian metric of $\widetilde{L}$. Since $N>a$, the curvature $i\Theta(\widetilde{L},H_s) \geq (1/a-1/N)\widetilde{\omega}$ is positive. By klt-ness of $(X,\Delta'')$, we have 
$$
\orbi{L}\left( \orbi{O}_{\widetilde{\orbi{X}}},  \widetilde{\left|\Delta''\right|}^{-2}\right)=\orbi{O}_{\widetilde{\orbi{X}}},
$$
so $\mathcal{I}(H_s) =\mathcal{I}(s)$ as in the proof of Proposition~\ref{prop:compsubs}.

Now consider the $(n,0)$-form $(\rho \circ \gamma \circ \pi) \cdot \widetilde{\sigma}$ from above.
The $(n,1)$-form $\tau:=\delbar (\rho \circ \gamma \circ \pi) \cdot \widetilde{\sigma} = \widetilde{\sigma} \delbar (\rho \circ \gamma \circ \pi)$ is $\delbar$-closed and square-integrable with respect to $H_s$ and $\widetilde{\omega}$, because its support lies in the relatively compact $\widetilde{U} \setminus \widetilde{F}$ and the poles of $H_s$ lie in $\widetilde{F}$. 

By Proposition~\ref{prop:delbarest}, there is a $\widetilde{L}$-valued $(n,0)$-form $\upsilon$ on $\widetilde{\orbi{X}}$, with $\delbar \upsilon = \tau$, again square-integrable with respect to $H_s$ and $\widetilde{\omega}$. Now set $\nu:=(\rho \circ \gamma \circ \pi) \cdot \widetilde{\sigma} - \upsilon$. Applying $\delbar$, we see that $\nu$ is holomorphic, and since $\upsilon$ is integrable with respect to $H_s$, we have $\left.\upsilon\right|_{\widetilde{F}} \equiv 0$ and thus $\left.\nu \right|_{\widetilde{F}}$ is not trivial. 

As we know from the proof of Proposition~\ref{prop:compsubs}, $\left|s\right|^2$ is bounded. So there is a positive constant $c$, such that
$$
\int_{\widetilde{\orbi{X}}} \left|\upsilon\right|_{\widetilde{h_{A^{\otimes a}}}}^2 \dif V_{\widetilde{\omega}} \leq c \int_{\widetilde{\orbi{X}}} \left|\upsilon\right|_{H_s}^2 \dif V_{\widetilde{\omega}}.
$$
Moreover, since $(\rho \circ \gamma \circ \pi) \cdot \widetilde{\sigma}$ is supported on $\widetilde{U}$, it is square-integrable with respect to $\widetilde{h_{A^{\otimes a}}}$ and $\widetilde{\omega}$ as well. So $\nu$ is a nontrivial section of $H^0_{(2)}(\widetilde{\orbi{X}},K_{\widetilde{\orbi{X}}} \otimes \widetilde{L})$.
\end{proof}

\begin{proposition}
\label{prop:fundfin}
Let $(Y,D'+D'')$ be a weakly Fano variety, $f:X \to Y$ a log resolution with exceptional prime divisors $E_i$, and $m_i \in \ZZ_{>0}$ arbitrary. Then the smooth geometric orbifold $\orbi{X}:=(X,\Delta=f_*^{-1} D' + \sum (1-1/m_i)E_i)$ has \emph{finite orbifold fundamental group}
$$
\left|\pi_1(\orbi{X})\right|<\infty.
$$
\end{proposition}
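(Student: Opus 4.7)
Assume for contradiction that $\pi_1(\orbi{X})$ is infinite. The strategy is to combine the non-trivial $L^2$-section on the orbifold universal cover provided by Proposition~\ref{prop:nontrivsec} with a Gromov-style descent argument to contradict the fact that $K_{\orbi{X}} + L = E$ is an $f$-exceptional effective divisor.

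First, observe that $K_{\orbi{X}} + L$ equals, as an integral divisor on the smooth variety $X$, the effective $f$-exceptional divisor $E$. Since $f \colon X \to Y$ is birational with $Y$ normal and projective, the projection formula gives $f_* \mathcal{O}_X(mE) = \mathcal{O}_Y$ for every $m \geq 0$, whence $h^0(X, mE) = h^0(Y, \mathcal{O}_Y) = 1$. Because $mE$ is already an integer divisor, the orbifold global sections of $m(K_{\orbi{X}} + L)$ coincide with the ordinary sections $H^0(X, mE)$, so
\begin{equation*}
h^0\bigl(\orbi{X},\, m(K_{\orbi{X}} + L)\bigr) = 1 \quad \text{for every } m \geq 0.
\end{equation*}

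Next, apply Proposition~\ref{prop:nontrivsec} to produce a non-trivial $L^2$-section $\nu \in H^0_{(2)}(\widetilde{\orbi{X}}, K_{\widetilde{\orbi{X}}} \otimes \pi^* L)$ which, by construction, is non-zero on a compact maximal subspace $\widetilde{F} \subset \widetilde{\orbi{X}}$ given by a connected component of $\pi^{-1}(F)$, where $F$ is a very general fiber of the $\Gamma$-reduction $\gamma \colon \orbi{X} \dasharrow \Gamma(\orbi{X})$ of Theorem~\ref{thm:G-red}. Since $\pi_1(\orbi{X})$ is assumed infinite, $\orbi{X}$ itself cannot be a fiber of $\gamma$ (fibers have finite orbifold fundamental group), so $\dim \Gamma(\orbi{X}) > 0$ and $\widetilde{F}$ admits infinitely many pairwise disjoint translates under the deck group. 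Third, one adapts the Bergman-kernel descent used by Takayama in \cite{Takayama} (based on \cite{GromovKaehler}) to the orbifold framework developed in Sections~\ref{sec:L2}--\ref{sec:maxsubsp}: averaging appropriate pointwise pairings of $\nu$ with its $\pi_1$-translates produces $\pi_1$-equivariant $L^2$-sections of high tensor powers of $K_{\widetilde{\orbi{X}}} \otimes \pi^*L$ that descend to genuine orbifold sections of $m(K_{\orbi{X}} + L)$ on $\orbi{X}$. Counting the contributions from the disjoint translates of $\widetilde{F}$ captured in growing balls yields an asymptotic lower bound
\begin{equation*}
h^0\bigl(\orbi{X},\, m(K_{\orbi{X}} + L)\bigr) \;\longrightarrow\; \infty \quad \text{as } m \to \infty,
\end{equation*}
in direct contradiction with the uniform bound $h^0 \equiv 1$ from the first step. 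This contradiction forces $\pi_1(\orbi{X})$ to be finite.

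\textbf{Main obstacle.} The technical heart of the argument is the third step: executing the Bergman-kernel/$L^2$-descent so that the averaged objects really define orbifold sections on $\orbi{X}$, compatibly with the local uniformizing groups $G_i$ at every orbifold chart. The positivity and $L^2$-estimates of Proposition~\ref{prop:delbarest}, together with the orbifold Dolbeault formalism of Section~\ref{sec:L2}, are the key inputs; but simultaneously tracking deck-group equivariance and orbifold equivariance through the averaging, while ensuring that the descended sections genuinely live in $H^0(\orbi{X}, m(K_{\orbi{X}} + L))$ and not in some larger overlying sheaf, is the main source of difficulty.
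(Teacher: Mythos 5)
Your strategy is the right one and coincides with the paper's at the structural level: start from the nontrivial $L^2$-section $\nu$ supplied by Proposition~\ref{prop:nontrivsec}, push it down to $\orbi{X}$ via a Gromov-type $\pi_1$-averaging, and contradict the one-dimensionality of $H^0(X,\orbi{O}_X(mE))$ coming from $f$-exceptionality of $E = K_{\orbi{X}} + L$. Your preliminary observations are correct and match the paper: $K_{\orbi{X}} + L = E$ is an integral $f$-exceptional effective divisor, so $h^0\bigl(X,\orbi{O}_X(mE)\bigr)=1$ for all $m$, and the orbifold sections of the orbibundle $(K_{\orbi{X}}\otimes L)^{\otimes m}$ are exactly the ordinary sections of $\orbi{O}_X(mE)$ (the local uniformizing groups act trivially on a bundle pulled back from the underlying smooth space).

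However, your third step is where the content lies and it is left genuinely vague, which constitutes a gap. "Averaging pointwise pairings of $\nu$ with its $\pi_1$-translates," a "Bergman-kernel descent," and "counting contributions from disjoint translates of $\widetilde{F}$ captured in growing balls to get $h^0 \to \infty$" do not specify an actual argument, and the growth heuristic risks sliding back into an orbifold $L^2$-index-theorem computation, which the paper explicitly goes out of its way to avoid because of orbifold-point corrections. What the paper actually does is form the Poincar\'e series
\[
P\bigl(\nu^{\otimes 2k}\bigr) \;=\; \sum_{g \in \pi_1(\orbi{X})} g^* \nu^{\otimes 2k},
\]
which converges because $\orbi{X}$ is compact and $\nu^{\otimes 2k}$ is $L^1$, yielding holomorphic $\pi_1(\orbi{X})$-invariant sections of $(K_{\widetilde{\orbi{X}}}\otimes\widetilde{L})^{\otimes 2k}$. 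The decisive input is then \cite[Prop.~3.2.A]{GromovKaehler}: if $\pi_1(\orbi{X})$ is infinite, there exist a single $\kappa$ and two partitions $\kappa = \sum k_i = \sum k'_i$ for which the quotient $\bigotimes_{k_i}P(\nu^{\otimes 2k_i}) / \bigotimes_{k'_i}P(\nu^{\otimes 2k'_i})$ is a nonconstant invariant meromorphic function; the two products therefore descend to two linearly independent elements of $H^0\bigl(X,\orbi{O}_X(2\kappa E)\bigr)$. This produces the contradiction already at $h^0 \geq 2$ for that one $\kappa$ — no asymptotic growth of $h^0$ is needed, and none is established by the paper's argument. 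So the conception of your proof is correct, but the technical heart you flag as the difficulty is precisely the part that needs to be replaced by the Poincar\'e-series-and-Gromov-3.2.A mechanism to become a proof.
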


\begin{proof}
Define the divisor $L$ as in Proposition~\ref{prop:nontrivsec}. Then by Proposition~\ref{prop:nontrivsec}, there is a nontrivial section $\nu \in H^0_{(2)}(\widetilde{\orbi{X}},K_{\widetilde{\orbi{X}}} \otimes \widetilde{L})$. 

For any $k \in \ZZ_{>0}$, the power $\nu^{\otimes 2k}$ is  a global  $L^1$-section of $(K_{\widetilde{\orbi{X}}} \otimes \widetilde{L})^{\otimes 2k}$. This is due to the fact that $\orbi{X}=\widetilde{\orbi{X}}/\pi_1(\orbi{X})$ is compact, see~\cite[p.~286]{GromovKaehler}. Here it is not necessary that $\pi_1(\orbi{X})$ acts freely on the complex analytic space $\widetilde{X}$. The Poincar\'e series 
$$
P(\nu^{\otimes 2k}):=\sum_{g \in \pi_1(\orbi{X})} g^*\nu^{\otimes 2k}
$$
converges and defines a holomorphic $\pi_1(\orbi{X})$-invariant section of $(K_{\widetilde{\orbi{X}}} \otimes \widetilde{L})^{\otimes 2k}$. 

Now consider products $\bigotimes_{k_i} P(\nu^{\otimes 2k_i})$ of these sections. 
Then~\cite[Prop.~3.2.A]{GromovKaehler} says that \emph{if $\pi_1(\orbi{X})$ is infinite}  there exists at least one $\kappa$ and at least two partitions $\kappa=\sum k_i$ and $\kappa=\sum k'_i$, such that 
$$
\frac{\bigotimes_{k_i} P(f^{\otimes 2k_i})}{\bigotimes_{k'_i} P(k^{\otimes 2k'_i})}
$$
is a nonconstant meromorphic $\pi_1(\orbi{X})$-invariant function on $\widetilde{\orbi{X}}$. 
Thus $\bigotimes_{k_i} P(f^{\otimes 2k_i})$ and $\bigotimes_{k'_i} P(k^{\otimes 2k'_i})$  define two linearly independent sections of $H^0(X,(K_{\orbi{X}} \otimes L)^{\otimes 2\kappa})$.

But on the other hand, since $E$ and thus also $2\kappa E$ is effective $f$-exceptional, we have $\dim H^0(X,\orbi{O}_X(2\kappa E))=1$.
 But we have seen that $E$ is linearly equivalent to $K_{\orbi{X}}+L$ (seen as a divisor). This is a contradiction, so $\pi_1(\orbi{X})$ is finite.
 
 \end{proof}
 
\subsection*{Proof of Theorem~\ref{thm:loctoglob}}
Now by the induction hypothesis, for an $n$-dimensional weakly Fano pair $(Y,D'+D'')$, we can relate finiteness of $\pi_1(Y_\sm,D')$ to the finiteness of the fundamental group of a compact orbifold supported on a log resolution, which finishes our proof.

\begin{proof}[Proof of Theorem~\ref{thm:loctoglob}]
Let $(Y,D'+D'')$ be an $n$-dimensional weakly Fano pair and assume that $n$-dimensional klt singularities  have finite regional fundamental group. Consider a log resolution $f:X \to Y$ with exceptional divisor $E=\bigcup E_i$, where $E_i$, $i \in I$ are prime. Then
$$
\pi_1(Y_\sm,D') \cong \pi_1(X \setminus E,\left.f_*^{-1}D'\right|_{X \setminus E}).
$$
Now let $\gamma_i$ be a very small loop around a general point $e_i$ of $E_i$. Then $\gamma_i$ can be pushed forward to $Y_\sm$ and there it lies in the smooth locus of a very small neighbourhood of the image of $e_i$, which is a klt singularity $y_i$. So by the induction hypothesis, $f_* \gamma_i$ has finite order $m_i$ in $\pi_1^{\reg}(Y,y_i)$. Therefore, it has finite order in $Y_{\sm}\cong X \setminus E$. 
Thus $\langle \gamma_i^{m_i}, i \in I \rangle$ is trivial and by Definition~\ref{def:orbifund} of the orbifold fundamental group 
\begin{align*}
\pi_1(X \setminus E,\left.f_*^{-1}D'\right|_{X \setminus E}) &=
\pi_1(X \setminus E,\left.f_*^{-1}D'\right|_{X \setminus E}) / \langle \gamma_i^{m_i}, i \in I \rangle
\\
&=
\pi_1\left(X,f_*^{-1}D' +\sum \left(1-\frac{1}{m_i}E_i\right)\right)
\end{align*}
But the latter is finite by Proposition~\ref{prop:fundfin}. Thus $\pi_1(Y_\sm,D')$ is finite as well and we are done.
\end{proof}

\part{Global to local}
\label{part:globtoloc}

In order to complete the induction, we have to show in this part that if $(n-1)$-dimensional weakly Fano pairs $(Y,D'+D'')$ have finite orbifold fundamental group $\pi_1(Y_{\sm},\left.D'\right|_{Y_{\sm}})$, then $n$-dimensional klt singularities have finite \emph{regional} fundamental group. 
We do this by modifying an argument of~\cite{TianXu}. First let us briefly recall the notions related to Whitney stratifications and their systems of tubular neighbourhoods. 

\section{Whitney stratifications}
\label{sec:Whit}

We refer to~\cite[Sec.~2]{Goresky} for the following definitions.
Let $X$ be a complex analytic space of dimension $n$ embedded in a smooth complex manifold $M$. In our context, we can always assume $M \cong \PP_m(\CC)$ for some $m\geq n$.

To a submanifold $N$ of $M$, we associate a \emph{tubular neighbourhood} $T_N$ in the following way: choose a Riemannian  metric $h$ on the normal bundle $E \to N$ and fix  $\delta \in \RR_{>0}$. Then $T_N$ is the image of a smooth embedding $\phi \colon E_{\delta} \to M$, where $E_{\delta}:=\{ v \in E;~\left|v\right|_h<\delta\}$ and $\phi$ takes the zero section of $E$ identically to $N$ . For $0<\varepsilon<\delta$, we define $T_N(\varepsilon):=\phi(\{ v \in E;~\left|v\right|<\varepsilon\})$ and its boundary $S_N(\varepsilon):=\phi(\{ v \in E;~\left|v\right|_h=\varepsilon\})$. We write $S_N:=S_N(\delta)$.  We have a \emph{tubular distance function} $\rho_N(x):=\left|\phi^{-1}(x)\right|_h$ and a \emph{projection} $\pi_N(x):=\phi \circ \pi \circ \phi^{-1}(x)$ both defined on $T_N$.

A \emph{Whitney stratification}  of $X \subseteq M$ is a filtration by closed subsets $X_0 \subset X_1 \subset \ldots \subset X_n=X$, such that the connected components of $X_i \setminus X_{i-1}$are locally closed $i$-dimensional submanifolds of $M$, the \emph{$i$-dimensional strata}. If $A$ and $B$ are strata with $A \cap \overline{B} \neq \emptyset$, then $A \subset \overline{B}$ and we write $A < B$. Any Whitney stratification allows a system of compatible tubular neighbourhoods of the strata, so called \emph{control data}. For two strata $A < B$, the tubular distance functions and projections from above have to satisfy $\pi_A \circ \pi_B = \pi_A$ and $\rho_A \circ \pi_B = \rho_A$. Moreover, for some $0<\varepsilon$,  the boundaries $S_N(\varepsilon)$ have to satisfy certain transversality properties. Namely if $A_1,\ldots,A_\mu$ and $B_1,\ldots,B_\eta$ are two disjoint collections of strata, then $S_{A_1}(\varepsilon) \cap \ldots \cap S_{A_\mu}(\varepsilon)$
and $S_{B_1}(\varepsilon) \cap \ldots \cap S_{B_\eta}(\varepsilon)$ are transversal and they are also transversal to any other stratum $C$. 

One can check these properties easily in the pictures of the next section.

\section{The work of Tian and Xu}
\label{sec:WorkTianXu}

It was shown in Lemmata 3.1 and 3.2 of~\cite{TianXu}, that if $(n-1)$-dimensional weakly Fano pairs $(Y,D'+D'')$ have finite orbifold fundamental group $\pi_1(Y_{\sm},\left.D'\right|_{Y_{\sm}})$, then $n$-dimensional klt singularities have finite \emph{local} fundamental group.

\subsection*{Tian and Xu's Lemma 3.4}
In~\cite[Le.~3.4]{TianXu}, finiteness of the  \emph{regional} fundamental group of an $n$-dimensional klt singularity $(X,x)$ is deduced from finiteness of the \emph{local} fundamental group of $k\leq n$-dimensional klt singularities. Unfortunately, there is a gap in the proof, as described in the following.

The proof uses a Whitney stratification of a neighbourhood of the singularity, together with a system of tubular neighbourhoods of the strata. Such a tubular neighbourhood minus the stratum itself is a fiber bundle over the stratum and the fiber over a point has finite fundamental group by assumption, since it is (homeomorphic to) a slice through a pointed neighbourhood of the point, which is klt.

Then the Seifert-van Kampen theorem is invoked to merge all these tubular neighbourhoods together to a neighbourhood of $x$ with the whole singular locus removed. The way the tubular neighbourhoods fit together is depicted below.

\begin{center}
\begin{tikzpicture}
\draw[line width=0.5pt, draw=black, domain=0:3, smooth, fill=lightgray,fill opacity=0.6] 
plot (3-\x,{-4+sqrt(-(3-\x)*(3-\x)+6*(3-\x)+16)}) 
--plot (\x,{4-sqrt(-\x*\x+6*\x+16)}) 
plot (-3+\x,{-4+sqrt(-(3-\x)*(3-\x)+6*(3-\x)+16)}) 
--plot (-\x,{4-sqrt(-\x*\x+6*\x+16)});

\draw[line width=0.5pt, draw=black, fill=lightgray,fill opacity=0.6] (0,0) circle (1cm);
\draw[black, fill=black] (0,0) circle [radius=2pt];
\draw[line width=1pt, draw=black] (-3,0)--(3,0);
\node[label=below:$x$] (A) at (0,0.1) {};
\node[label=right:$A$] (B) at (2.8,0) {};
\node[label=right:$T_A$] (B) at (2,0.5) {};
\node[label=right:$S_A$] (B) at (2.8,1) {};
\node[label=below:$T_x$] (A) at (0,0.95) {};
\node[label=above:$S_x$] (A) at (0,0.85) {};
\end{tikzpicture}
\end{center}

Here the singular locus $X_\sing=A \cup \{x\}$ has the zero-dimensional stratum $\{x\}$ and the one-dimensional stratum $A$, together with their tubular neighbourhoods $T_x$ and $T_A$. Note that their boundaries $S_x$ and $S_A$ are transversal. The regional fundamental group of $x$ is nothing else than the fundamental group of $U_x:=T_x \setminus X_\sing$. We have $T_x^0= U_x \cup (T_A \cap T_x)$. The intersection of $U_x$ and $T_A \cap T_x$ is just $T_A^0 \cap T_x$.
Thus we have canonical group homomorphisms $h_1 \colon T_A^0 \cap T_x \to U_x$ and $h_2 \colon T_A^0 \cap T_x \to T_A \cap T_x$.
 But since $T_A^0 \cap T_x$ and  $T_A \cap T_x$ both are  fiber bundles over $A \cap T_x$ with the fiber having finite fundamental group due to the assumption and trivial fundamental group, respectively, $h_2$ has finite kernel. 
 
Now the Seifert-van Kampen theorem says that $\pi_1(T_x^0)$ - which is finite by assumption - is the quotient of the free product $\pi_1(U_x) * \pi_1(T_A \cap T_x)$ by the normal subgroup $N$ generated by all elements $h_1(g)h_2(g)^{-1}$, where $g \in T_A^0 \cap T_x$.
The intersection of $N$ with $\pi_1(U_x) \subseteq \pi_1(U_x) * \pi_1(T_A \cap T_x)$ is nothing but the image under $h_1$ of the kernel of $h_2$.
 Now in the proof of~\cite[Le.~3.4]{TianXu}, it is argued that since $h_2$ has finite kernel, $\pi_1(U_x)$ can not be infinite. This is not necessarily true, since $h_1$ does not have to be surjective, that is, $h_1(\ker h_2)$ does not have to be a normal subgroup, and its normal closure can be infinite.  
 
 In fact, it is not hard to see that deducing finiteness of the regional fundamental group from finiteness of the local fundamental group is equally hard as deducing finiteness of $\pi_1(Y_{\sm},D')$ for weakly Fano pairs $(Y,D'+D'')$ from it. This is since $Y$ has a Whitney stratification as well and since we know that $Y$ is simply connected, the above arguments could be applied in the exact same manner.
 
 But as Tian and Xu suggested, one could try to modify Lemma 3.1 of~\cite{TianXu} to directly prove finiteness of the regional fundamental group. So let us have a close look at this lemma.

 \subsection*{Tian and Xu's Lemma 3.1}
 
 In~\cite[Sec.~3.2]{TianXu}, a klt singularity $x \in (X,\Delta)$ is blown up one time via $f:Y\to X$ such that the only exceptional prime divisor $E \subseteq Y$ (the so called Koll\'ar component) admits a divisor $\Delta_E$, such that $(E,\Delta_E)$ is weakly Fano~\cite[Rem.~1]{Xu}. Here $\Delta_E$ is the \emph{different} satisfying
 $$
 K_E + \Delta_E = \left.(K_Y + E + f_*^{-1}\Delta)\right|_E.
 $$ 
 On the other hand, we can write $\Delta_E=D'+D''$, where $D'$ is the 'different of zero' as defined in~\cite[Prop.-Def.~16.5]{Corti} by
 $$
 K_E + D' = \left.(K_Y + E )\right|_E.
 $$
 Then~\cite[Prop.~16.6]{Corti} says that on the one hand, $D'$ is of the form $\sum (1-1/m_i)D_i'$, and on the other hand, the $D_i'$ are \emph{singular strata in $Y$ of maximal dimension}, in particular, locally analytically at a general point of $D_i'$, $Y$ looks like $Z \times \CC^{n-2}$, where $Z$ is a two-dimensional cyclic quotient singularity $\CC^2/(\ZZ/m_i\ZZ)$. 
 
 It is shown in~\cite[Le.~3.1]{TianXu}, that the fundamental group of a certain open subset $V^0$ of a neighbourhood $U$ of $E$ surjects to the fundamental group of $U^0:=U \setminus E$, which is nothing else but the local fundamental group of $x$.
 
Then in~\cite[Le.~3.2]{TianXu}, it is shown that from finiteness of $\pi_1(E_{\sm},D')$ follows finiteness of $\pi_1(V^0)$.
 
 \begin{remark}
 It is not explicitly mentioned in~\cite{TX} that $(E,D'+D'')$ is log Fano but one has to consider the orbifold fundamental group $\pi_1(E_{\sm},D')$ - where $D'$ and $D''$ only coincide when the boundary $\Delta$ on $X$ is trivial. But by~\cite[Prop.~16.6]{Corti} this does not matter. This can be seen directly by observing that klt-ness of general points of singular strata of codimension two  does not depend on the boundary, since two-dimensional klt singularities are log-terminal.
 \end{remark}  

 Now if we could show that  $\pi_1(V^0)$ even surjects to $\pi_1(U_{\sm} \setminus E)$ - which is nothing but the regional fundamental group of $x$, we would be done. So how does the proof of~\cite[Le.~3.1]{TianXu} look like and how can it be modified?

After the blowup $f: Y \to X$, extracting the Koll\'ar component $E= f^{-1}(x)$, a Whitney stratification of $E$ is chosen, with biggest stratum $E_0 := E_{\sm}$. Choose $\varepsilon>0$. From the tubular neighbourhoods of the strata, a neighbourhood $U(\varepsilon)$ of $E$ in $Y$ is defined as follows, after~\cite[Def.~7.1]{Goresky}:
$$
U(\varepsilon):= \bigcup_{S \subseteq E \mathrm{~stratum}} T_S(\varepsilon).
$$
Note that strictly speaking, we have to embed $Y$ in a smooth manifold $M$ and the tubular neighbourhoods are neighbourhoods in $M$, not in $Y$. So as Goresky points out, the closure $\overline{U(\varepsilon)}$ in $M$ is a \emph{manifold with corners}, the corners being the intersections $S_{A_1}(\varepsilon) \cup \ldots S_{A_k}(\varepsilon)$, where $A_1 < \ldots < A_k$ are incident strata.
Nevertheless, we will denote the intersections of all these objects with $Y$ in the same way.
 We can draw a similar picture as before to depict the situation.
\begin{center}
\begin{tikzpicture}

\draw[line width=0.5pt, draw=black, fill=lightgray,fill opacity=1] (0,0) circle (1cm);

\draw[line width=0.5pt, draw=black, domain=0:3, smooth, fill=lightgray,fill opacity=1] 
plot (3-\x,{-4+sqrt(-(3-\x)*(3-\x)+6*(3-\x)+16)}) 
--plot (\x,{4-sqrt(-\x*\x+6*\x+16)}) 
plot (-3+\x,{-4+sqrt(-(3-\x)*(3-\x)+6*(3-\x)+16)}) 
--plot (-\x,{4-sqrt(-\x*\x+6*\x+16)});

\fill[fill=lightgray,fill opacity=1] (0,0) circle (28.2027559pt);
\draw[black, fill=black] (0,0) circle [radius=2pt];
\draw[line width=1pt, draw=black] (-3,0)--(3,0);
\node[label=below:$x$] (A) at (0,0.1) {};
\node[label=right:$E_0$] (B) at (2.8,0) {};
\node[label=below:$U(\varepsilon)$] (A) at (0,0.95) {};
\node[label=above:$\del U(\varepsilon)$] (A) at (0,0.85) {};
\end{tikzpicture}
\end{center}
Here $x \in E$ is the only stratum apart from $E_0$. The closure $\overline{U(\varepsilon)}$ has boundary $(S_{E_0}(\varepsilon) \cup S_x(\varepsilon)) \setminus (T_{E_0}(\varepsilon) \cup T_x(\varepsilon))$ and corners $S_{E_0}(\varepsilon) \cap S_x(\varepsilon)$. Now following~\cite[Sec.~7]{Goresky}, we construct a deformation retraction $\psi:U(\varepsilon) \to E$ as follows. 

First for every stratum consider a retraction $r_A \colon T_A(2\varepsilon) \setminus A \to S_A(2\varepsilon)$, such that the following hold whenever $A < B$ are incident strata:
$$
r_A \circ r_B = r_B \circ r_A,
\quad
\rho_A \circ r_B = \rho_A,
\quad
\rho_B \circ r_A = \rho_B,
\quad
\pi_A \circ r_A = \pi_A,
\quad
\pi_B \circ r_B = \pi_B.
$$
These retractions have been constructed in~\cite[Sec.~2]{GorFamLin} under the name \emph{families of lines}. From these one can define homeomorphisms $h_A \colon T_A(2\varepsilon) \setminus A \to S_A(2\varepsilon) \times (0,2\varepsilon)$, where $h_A(p)=(r_A(p),\rho_A(p))$. Now fix a smooth nondecreasing function $q$ with $q(t)=0$ for $t \leq \varepsilon$, $q(t)>0$ for $t > \varepsilon$  and $q(t)=t$ for $t \geq 2\varepsilon$. Now define
$$
H_A(p):= \left\lbrace
\begin{matrix}
p & \mathrm{if~} p \notin T_A(2\varepsilon) \setminus A \\
h_A^{-1}(r_A(p),q(\rho_A(p)) & \mathrm{if~} p \in T_A(2\varepsilon) \setminus A. \\
\end{matrix}
\right.
$$
Thus $H_A(\overline{T_A(\varepsilon)})=A$ and $H_A(T_A \setminus  \overline{T_A(\varepsilon)})=T_A$. Now define $\tilde{\psi}: U(2\varepsilon) \to U(2\varepsilon)$ by $\tilde{\psi}:= H_{A_1} \circ \ldots \circ H_{A_N}$, where $A_1,\ldots,A_N$ are the strata of $E$ in any order. Let $\psi:=\left.\tilde{\psi}\right|_{U(\varepsilon)}$. The restriction of $\psi$ to $E$ is homotopic to the identity~\cite[p.~220]{GM}. For each stratum $A$ and $\eta >0$ define the $\eta$-interior 
$$
A^\eta:= A \setminus \bigcup_{B < A} \overline{T_B(\eta)}
$$
as in~\cite[p.~180]{GM83}. With this definition, we see that $\psi(\pi_A^{-1}(A^\varepsilon))=A$ and $\psi(A \setminus A^\epsilon) \subseteq \bigcup_{B < A} B$. In our picture, $\psi$ collapses the (darkgray) $T_x(\varepsilon)$  to $x$ and the (lightgray) $\pi_{E_0}^{-1}(E_0^\varepsilon)$  to $E_0$:
 \begin{center}
\begin{tikzpicture}

\draw[line width=0.5pt, draw=black, fill=lightgray,fill opacity=1] (0,0) circle (1cm);

\draw[line width=0.5pt, draw=black, domain=0:3, smooth, fill=gray!20,fill opacity=1] 
plot (3-\x,{-4+sqrt(-(3-\x)*(3-\x)+6*(3-\x)+16)}) 
--plot (\x,{4-sqrt(-\x*\x+6*\x+16)}) 
plot (-3+\x,{-4+sqrt(-(3-\x)*(3-\x)+6*(3-\x)+16)}) 
--plot (-\x,{4-sqrt(-\x*\x+6*\x+16)});

\fill[fill=lightgray,fill opacity=1] (0,0) circle (28.2027559pt);
\draw[black, fill=black] (0,0) circle [radius=2pt];
\draw[line width=1pt, draw=black] (-3,0)--(3,0);
\draw[line width=1pt, draw=gray!80] (-3,0)--(-1,0) (1,0)--(3,0);
\node[label=below:$x$] (A) at (0,0.1) {};
\node[label=-160:$E_0^\varepsilon$] (B) at (3,0) {};
\node[label=right:$\pi_{E_0}^{-1}(E_0^\varepsilon)$] (B) at (1.35,0.5) {};
\node[label=below:$T_x(\varepsilon)$] (A) at (0,0.95) {};
\end{tikzpicture}
\end{center}

Now what is shown in~\cite[Le.~3.1]{TianXu}, is that the canonical group homomorphism
$
\pi_1(V^0) \to \pi_1(U(\varepsilon)\setminus E),
$
where $V^0:=\psi^{-1}(E_0) \setminus E=\pi_{E_0}^{-1}(E_0^\varepsilon) \setminus E$, is surjective. 

This is done by adding to $V^0$ closures in $U(\varepsilon)$ of the sets $V_A^0:=\psi^{-1}(A) \setminus E$ for all strata $A$, starting with those of highest dimension. Since all boundaries are collared, it is possible  to invoke the Seifert-van Kampen theorem, in order to show that $\pi_1(V^0) \to \pi_1(V^0 \cup \overline{V_A^0})$ is surjective, and so on. This is done by successive fiber bundle decompositions of $\overline{V_A^0}$. In order to really see what happens, we need a higher-dimensional picture with more strata.

 \begin{center}
 \tdplotsetmaincoords{70}{55}
 \tdplotsetrotatedcoords{0}{90}{0}
\begin{tikzpicture}[tdplot_main_coords]

\foreach \t in {-3,-2.9,...,3}
\draw[line width=0.5pt, draw=black, domain=0:3, smooth, opacity=\opac] 
plot (\t,3-\x,{4-sqrt(-(3-\x)*(3-\x)+6*(3-\x)+16)})
plot (\t,-3+\x,{4-sqrt(-(3-\x)*(3-\x)+6*(3-\x)+16)});;
\draw[line width=0.5pt, draw=black, domain=0:3, smooth, opacity=\opac] (-3,3,-1)--(3,3,-1) (-3,-3,-1)--(3,-3,-1);

\foreach \t in {-90,-81,...,90}
\draw[line width=0.5pt, draw=black, domain=0:3, smooth, opacity=\opac, tdplot_rotated_coords] 
plot (xyz cylindrical cs:radius={-4+sqrt(-(3-\x)*(3-\x)+6*(3-\x)+16)},angle=\t,z={3-\x});;
\foreach \t in {-90,-81,...,90}
\draw[line width=0.5pt, draw=black, domain=0:3, smooth, opacity=\opac, tdplot_rotated_coords] 
plot (xyz cylindrical cs:radius={-4+sqrt(-(3-\x)*(3-\x)+6*(3-\x)+16)},angle=\t,z={-3+\x});;
 \draw[line width=0.5pt, draw=black, domain=-90:90, smooth, opacity=\opac, tdplot_rotated_coords] 
plot (xyz cylindrical cs:radius={1},angle=\x,z={-3});

\foreach \t in {3,12,...,354}
\draw[line width=0.5pt, draw=black, domain=0:-180, smooth, opacity=\opac] 
plot (xyz spherical cs:radius=1,longitude={\t},latitude={\x});;

\fill[fill=gray,fill opacity=0.8] (-3,-3,0) -- (3,-3,0) -- (3,3,0) -- (-3,3,0) -- cycle;
\draw[black, fill=black] (0,0,0) circle [radius=2pt];
\draw[line width=1pt, draw=black] (-3,0,0)--(3,0,0);


\foreach \t in {-3,-2.9,...,3}
\draw[line width=0.5pt, draw=black, domain=0:3, smooth, opacity=\opac] 
plot (\t,3-\x,{-4+sqrt(-(3-\x)*(3-\x)+6*(3-\x)+16)})
plot (\t,-3+\x,{-4+sqrt(-(3-\x)*(3-\x)+6*(3-\x)+16)});;
\draw[line width=0.5pt, draw=black, domain=0:3, smooth, opacity=\opac] (-3,3,1)--(3,3,1) (-3,-3,1)--(3,-3,1);

\foreach \t in {-90,-99,...,-270}
\draw[line width=0.5pt, draw=black, domain=0:3, smooth, opacity=\opac, tdplot_rotated_coords] 
plot (xyz cylindrical cs:radius={-4+sqrt(-(3-\x)*(3-\x)+6*(3-\x)+16)},angle=\t,z={3-\x});;
 \draw[line width=0.5pt, draw=black, domain=0:360, smooth, opacity=\opac, tdplot_rotated_coords] 
plot (xyz cylindrical cs:radius={1},angle=\x,z={3});
\foreach \t in {-90,-99,...,-270}
\draw[line width=0.5pt, draw=black, domain=0:3, smooth, opacity=\opac, tdplot_rotated_coords] 
plot (xyz cylindrical cs:radius={-4+sqrt(-(3-\x)*(3-\x)+6*(3-\x)+16)},angle=\t,z={-3+\x});;
 \draw[line width=0.5pt, draw=black, domain=-90:-270, smooth, opacity=\opac, tdplot_rotated_coords] 
plot (xyz cylindrical cs:radius={1},angle=\x,z={-3});

\foreach \t in {3,12,...,354}
\draw[line width=0.5pt, draw=black, domain=0:180, smooth, opacity=\opac] 
plot (xyz spherical cs:radius=1,longitude={\t},latitude={\x});;
\draw[line width=0.5pt, draw=black, domain=0:360, smooth, opacity=\opac] 
plot (xyz spherical cs:radius=1,longitude={\x},latitude=0);
\end{tikzpicture}
\end{center}

Here, the horizontal plane depicts the divisor $E$, having three strata: the origin $o$, a one-dimensional stratum $A$, and the big open stratum $E_0$. It holds $\{o\}<A<E_0$. Also the ( boundaries of the) tubular neighbourhoods $T_N(\varepsilon)$ of  these strata are depicted, and their union is the open neighbourhood $U(\varepsilon)$ of $E$. Now we have 
$$
V:=\psi^{-1}(E_0)=\pi_{E_0}^{-1}(E_0^\varepsilon)=T_{E_0}(\varepsilon) \setminus (\overline{T_{A}(\varepsilon)} \cup \overline{T_{o}(\varepsilon)}),
$$
 which is depicted below, and in order  to get $V^0$ we have to subtract $E$. 

In a first step, the closure of $V_A^0:=\psi^{-1}(A)\setminus E$ has to be added to $V^0$. The Seifert-van Kampen theorem can be used to compute the fundamental group of the resulting space. Taking into account that all these spaces have collared boundaries, we can assume the intersection of $V^0$ and $V_A^0$ is $\del V_A^0 \cap T_{E_0}(\varepsilon)$, which is denoted $\orbi{L}_2$ in~\cite{TianXu}. 
Then if $\pi_1(\orbi{L}_2) \to \pi_1(V_A^0)$ is surjective, so is $\pi_1(V^0)\to \pi_1(\psi^{-1}(A \cup E_0)$.

But $\orbi{L}_2$ is a fiber bundle  over $A^\varepsilon$, with fiber $L_2$ homotopic to $\pi_A^{-1}(a) \cap  \del V \setminus E$ for some $a \in A^\varepsilon$, which is depicted in the cross-section through $a$ below.
 \begin{center}
\begin{tikzpicture}


\fill[ domain=0:3, smooth, fill=gray!20,fill opacity=1] 
plot (3-\x,{-4+sqrt(-(3-\x)*(3-\x)+6*(3-\x)+16)}) 
--plot (\x,{4-sqrt(-\x*\x+6*\x+16)}) 
plot (-3+\x,{-4+sqrt(-(3-\x)*(3-\x)+6*(3-\x)+16)}) 
--plot (-\x,{4-sqrt(-\x*\x+6*\x+16)});

\fill[fill=lightgray,fill opacity=1] (0,0) circle (28.2027559pt);
\draw[black, fill=black] (0,0) circle [radius=2pt];
\draw[line width=1pt, draw=black] (-3,0)--(3,0);
\draw[line width=1pt, draw=gray!80] (-3,0)--(-1,0) (1,0)--(3,0);
\node[label=below:$a$] (A) at (0,0.1) {};
\node[label=-160:$E_0^\varepsilon$] (B) at (3,0) {};
\node[label=right:$V$] (B) at (1.35,0.5) {};
\node[label=below:$V_A$] (A) at (0,0.95) {};

\node[label=below:$L_2$] (A) at (-0.9,-0.4) {};

\clip (-2,0.505) rectangle (2,-0.505);

\draw[line width=1.5pt, draw=black] (0,0) circle (1cm);
\end{tikzpicture}
\end{center}
On the other hand, also $V_A^0$ is a fiber bundle over $A^\varepsilon$, with fiber $L$ homotopic to $\pi_A^{-1}(a) \setminus E$ for some $a \in A^\varepsilon$. Thus if $\pi_1(L_2) \to \pi_1(L)$ is surjective, then so is $\pi_1(\orbi{L}_2) \to \pi_1(V_A^0)$.

Now $L_2$ and $L$ have a fiber bundle structure as well. There is a morphism $\varphi_A \colon V_A=\psi^{-1} \to \DD$, where $\DD=\{z \in \CC;~|z|<\varepsilon\}$, such that $Z_{A,0}:=\varphi^{-1}(0)=V_A \cap E$ and $\varphi_A$ is a topological fibration over $\DD^0:=\DD \setminus \{0\}$, see~\cite[p.~260]{TianXu}. Compare also the map $f$ in~\cite[Sec.~6.1]{GM83} and~\cite[Part~II,~Sec.~6.13.1]{GM}. In our picture, we see that approximately the fibers  $Z_{A,t}$ for $t \in \DD$ are horizontal sections of $V_A$.

 \begin{center}
\begin{tikzpicture}


\fill[ domain=0:3, smooth, fill=gray!20,fill opacity=1] 
plot (3-\x,{-4+sqrt(-(3-\x)*(3-\x)+6*(3-\x)+16)}) 
--plot (\x,{4-sqrt(-\x*\x+6*\x+16)}) 
plot (-3+\x,{-4+sqrt(-(3-\x)*(3-\x)+6*(3-\x)+16)}) 
--plot (-\x,{4-sqrt(-\x*\x+6*\x+16)});

\fill[fill=lightgray,fill opacity=1] (0,0) circle (28.2027559pt);
\draw[black, fill=black] (0,0) circle [radius=2pt];
\draw[line width=1pt, draw=black] (-3,0)--(3,0);
\draw[line width=1pt, draw=gray!80] (-3,0)--(-1,0) (1,0)--(3,0);
\node[label=below:$a$] (A) at (0,0.1) {};
\node[label=-160:$E_0^\varepsilon$] (B) at (3,0) {};
\node[label=right:$V$] (B) at (1.35,0.5) {};
\node[label=below:$V_A$] (A) at (0,1.05) {};

\node[label=below:$L_2$] (A) at (-0.9,-0.4) {};

\clip (-2,0.505) rectangle (2,-0.505);

\draw[line width=1pt, draw=black] (-0.93969,0.34202)--(0.93969,0.34202);
\node[label=left:$Z_{A,t}$] (C) at (-0.9,0.34202) {};

\draw[line width=1.5pt, draw=black] (0,0) circle (1cm);
\end{tikzpicture}
\end{center}
So setting $Z_{a,t}:=Z_{A,t}\cap \pi_A^{-1}(a)$, we see that $L$ is a $Z_{a,t}$-bundle over $\DD^0$ and $L_2$ is a $\del Z_{a,t}$-bundle over $\DD^0$. So we have to show that $\pi_1(\del Z_{a,t}) \to \pi_1(Z_{a,t})$ is surjective. But $Z_{a,t}$ is homotopic to a collared affine analytic space of dimension $c$, where $c$ is the codimension of $A$ in $E$, see~\cite[Part~II,~Prop.~6.13.5]{GM}. Since $c\geq 2$, it follows that $\pi_0(\del Z_{a,t}) \to \pi_0(Z_{a,t})$ is an isomorphism and $\pi_1(\del Z_{a,t}) \to \pi_1(Z_{a,t})$ is surjective.

Repeating this procedure for all strata of $E$, Lemma 3.1 of~\cite{TianXu} is proven. 

\section{Finiteness of the regional fundamental group}
\label{sec:Le31mod}

In this section, we  prove Theorem~\ref{thm:globtoloc}, the global-to-local part of our induction, by modifying the proof of~\cite[Le.~3.1]{TianXu} appropriately.

\begin{proof}[Proof of Theorem~\ref{thm:globtoloc}]
As in Lemma 3.1 of~\cite{TianXu}, we start with an $n$-dimensional singularity $x \in X$ of a klt pair $(X,\Delta)$. We assume that the smooth locus of $(n-1)$-dimensional weakly Fano pairs has finite orbifold fundamental group. Let $f:Y \to X$ be a plt blowup extracting the Koll\'ar component $E=f^{-1}(x)$. Consider a Whitney stratification of $Y$, such that the biggest stratum is $Y_\sm$ and for $k\leq n-2$, the $k$-dimensional strata are the relative interiors - with respect to $Y_\sing$ - of the irreducible $k$-dimensional components of the singular locus $Y_\sing$. This induces a Whitney stratification of $E$ by cutting each stratum with $E$. Fix this stratification.

Let $0<\varepsilon<<1$ and  $U(\varepsilon)$ be a neighbourhood of $E$ as constructed in the previous section. Then $\pi_1^\reg(X,x)\cong \pi_1(U(\varepsilon)\setminus ( E \cup Y_\sing))$.
Again as in the previous section, construct the retraction $\psi \colon U(\varepsilon)\to E$.
Note that for any stratum of $Y_\sing$ there are two possibilities. Either it is of dimension $(n-2)$ \emph{and}  it is contained in $E$, and thus is of codimension one in $E$. Or it's intersection with $E$ is of codimension greater or equal to two in $E$. Now define
$$
E_0:=E \setminus \bigcup_{
\tiny{\begin{array}{c}
A \subseteq Y_\sing \mathrm{~stratum,}\\
\codim_E(A)\geq 2
\end{array}
}} A.
$$
Then if we choose $\varepsilon$ small enough, it is clear that $Y_\sing \cap U(\varepsilon)$ lies in $U(\varepsilon) \setminus \psi^{-1}(E_0)$. The situation is depicted below.

 \begin{center}
 \tdplotsetmaincoords{70}{55}
 \tdplotsetrotatedcoords{0}{90}{0}
\begin{tikzpicture}[tdplot_main_coords]

\foreach \t in {-3,-2.9,...,3}
\draw[line width=0.5pt, draw=black, domain=0:3, smooth, opacity=\opac] 
plot (\t,3-\x,{4-sqrt(-(3-\x)*(3-\x)+6*(3-\x)+16)});;
\draw[line width=0.5pt, draw=black, domain=0:3, smooth, opacity=\opac] (-3,3,-1)--(3,3,-1);

\foreach \t in {0,9,...,90}
\draw[line width=0.5pt, draw=black, domain=0:3, smooth, opacity=\opac, tdplot_rotated_coords] 
plot (xyz cylindrical cs:radius={-4+sqrt(-(3-\x)*(3-\x)+6*(3-\x)+16)},angle=\t,z={3-\x});;
\foreach \t in {0,9,...,90}
\draw[line width=0.5pt, draw=black, domain=0:3, smooth, opacity=\opac, tdplot_rotated_coords] 
plot (xyz cylindrical cs:radius={-4+sqrt(-(3-\x)*(3-\x)+6*(3-\x)+16)},angle=\t,z={-3+\x});;
 \draw[line width=0.5pt, draw=black, domain=-90:90, smooth, opacity=\opac, tdplot_rotated_coords] 
plot (xyz cylindrical cs:radius={1},angle=\x,z={-3});

\foreach \t in {180,189,...,360}
\draw[line width=0.5pt, draw=black, domain=0:-90, smooth, opacity=\opac] 
plot (xyz spherical cs:radius=1,longitude={\t},latitude={\x});;

\fill[fill=lightgray,fill opacity=0.8] (-3,0,0) -- (-3,0,-2.5) -- (3,0,-2.5) -- (3,0,0) -- cycle;
\draw[line width=1.5pt, draw=black] (0,0,0)--(0,0,-2.5);

\foreach \t in {-3,-2.9,...,3}
\draw[line width=0.5pt, draw=black, domain=0:3, smooth, opacity=\opac] 
plot (\t,-3+\x,{4-sqrt(-(3-\x)*(3-\x)+6*(3-\x)+16)});;
\draw[line width=0.5pt, draw=black, domain=0:3, smooth, opacity=\opac] (-3,-3,-1)--(3,-3,-1);

\foreach \t in {-90,-81,...,0}
\draw[line width=0.5pt, draw=black, domain=0:3, smooth, opacity=\opac, tdplot_rotated_coords] 
plot (xyz cylindrical cs:radius={-4+sqrt(-(3-\x)*(3-\x)+6*(3-\x)+16)},angle=\t,z={3-\x});;
\foreach \t in {-90,-81,...,0}
\draw[line width=0.5pt, draw=black, domain=0:3, smooth, opacity=\opac, tdplot_rotated_coords] 
plot (xyz cylindrical cs:radius={-4+sqrt(-(3-\x)*(3-\x)+6*(3-\x)+16)},angle=\t,z={-3+\x});;
 \draw[line width=0.5pt, draw=black, domain=-90:90, smooth, opacity=\opac, tdplot_rotated_coords] 
plot (xyz cylindrical cs:radius={1},angle=\x,z={-3});

\foreach \t in {0,9,...,180}
\draw[line width=0.5pt, draw=black, domain=0:-90, smooth, opacity=\opac] 
plot (xyz spherical cs:radius=1,longitude={\t},latitude={\x});;


\fill[fill=gray,fill opacity=0.8] (-3,-3,0) -- (3,-3,0) -- (3,3,0) -- (-3,3,0) -- cycle;
\draw[black, fill=black] (0,0,0) circle [radius=2pt];
\draw[line width=1pt, draw=black] (-3,0,0)--(3,0,0);


\foreach \t in {-3,-2.9,...,3}
\draw[line width=0.5pt, draw=black, domain=0:3, smooth, opacity=\opac] 
plot (\t,3-\x,{-4+sqrt(-(3-\x)*(3-\x)+6*(3-\x)+16)});;
\draw[line width=0.5pt, draw=black, domain=0:3, smooth, opacity=\opac] (-3,3,1)--(3,3,1);

\foreach \t in {90,99,...,180}
\draw[line width=0.5pt, draw=black, domain=0:3, smooth, opacity=\opac, tdplot_rotated_coords] 
plot (xyz cylindrical cs:radius={-4+sqrt(-(3-\x)*(3-\x)+6*(3-\x)+16)},angle=\t,z={3-\x});;
 \draw[line width=0.5pt, draw=black, domain=0:360, smooth, opacity=\opac, tdplot_rotated_coords] 
plot (xyz cylindrical cs:radius={1},angle=\x,z={3});
\foreach \t in {90,99,...,180}
\draw[line width=0.5pt, draw=black, domain=0:3, smooth, opacity=\opac, tdplot_rotated_coords] 
plot (xyz cylindrical cs:radius={-4+sqrt(-(3-\x)*(3-\x)+6*(3-\x)+16)},angle=\t,z={-3+\x});;
 \draw[line width=0.5pt, draw=black, domain=-180:-270, smooth, opacity=\opac, tdplot_rotated_coords] 
plot (xyz cylindrical cs:radius={1},angle=\x,z={-3});

\foreach \t in {-90,-81,...,90}
\draw[line width=0.5pt, draw=black, domain=0:90, smooth, opacity=\opac] 
plot (xyz spherical cs:radius=1,longitude={\t},latitude={\x});;
\draw[line width=0.5pt, draw=black, domain=-90:90, smooth, opacity=\opac] 
plot (xyz spherical cs:radius=1,longitude={\x},latitude=0);


\fill[fill=lightgray,fill opacity=0.8] (-3,0,0) -- (-3,0,2.5) -- (3,0,2.5) -- (3,0,0) -- cycle;
\draw[line width=1.5pt, draw=black] (0,0,0)--(0,0,2.5);

\foreach \t in {-3,-2.9,...,3}
\draw[line width=0.5pt, draw=black, domain=0:3, smooth, opacity=\opac] 
plot (\t,-3+\x,{-4+sqrt(-(3-\x)*(3-\x)+6*(3-\x)+16)});;
\draw[line width=0.5pt, draw=black, domain=0:3, smooth, opacity=\opac]  (-3,-3,1)--(3,-3,1);

\foreach \t in {180,189,...,270}
\draw[line width=0.5pt, draw=black, domain=0:3, smooth, opacity=\opac, tdplot_rotated_coords] 
plot (xyz cylindrical cs:radius={-4+sqrt(-(3-\x)*(3-\x)+6*(3-\x)+16)},angle=\t,z={3-\x});;
 \draw[line width=0.5pt, draw=black, domain=0:360, smooth, opacity=\opac, tdplot_rotated_coords] 
plot (xyz cylindrical cs:radius={1},angle=\x,z={3});
\foreach \t in {180,189,...,270}
\draw[line width=0.5pt, draw=black, domain=0:3, smooth, opacity=\opac, tdplot_rotated_coords] 
plot (xyz cylindrical cs:radius={-4+sqrt(-(3-\x)*(3-\x)+6*(3-\x)+16)},angle=\t,z={-3+\x});;
 \draw[line width=0.5pt, draw=black, domain=-90:-180, smooth, opacity=\opac, tdplot_rotated_coords] 
plot (xyz cylindrical cs:radius={1},angle=\x,z={-3});

\foreach \t in {90,99,...,270}
\draw[line width=0.5pt, draw=black, domain=0:90, smooth, opacity=\opac] 
plot (xyz spherical cs:radius=1,longitude={\t},latitude={\x});;
\draw[line width=0.5pt, draw=black, domain=90:270, smooth, opacity=\opac] 
plot (xyz spherical cs:radius=1,longitude={\x},latitude=0);
\end{tikzpicture}
\end{center}

Here $Y_\sing$ has a $2$-dimensional stratum $Y_A$ that meets $E$ in the $1$-dimensional stratum $A$  and a $1$-dimensional stratum $Y_o$ that meets $E$ in the $0$-dimensional stratum $o$ ($A$ and $o$ as denoted in the last section).  Now as in the proof of~\cite[Le.~3.1]{TianXu}, start with $V_{E_0}^0:=\psi^{-1}(E_0) \setminus E$. But instead of adding (the closures of) $V_N^0:=\psi^{-1}(N) \setminus E$ to $V_{E_0}^0$ for all strata $N$ of $E \setminus E_0$, now we have to add $V_N^\sm:= V_N^0 \setminus Y_\sing$ in order to arrive at $U(\varepsilon)\setminus ( E \cup Y_\sing)$.

Now everything works the same way as in~\cite[Le.~3.1]{TianXu}, \emph{untill} we arrive at the $Z_{N,t}$ for some stratum $N$, compare the explanations in the previous section. Here, now we have to show that  $\pi_1(\del Z_{n,t} \setminus Y_\sing) \to \pi_1(Z_{n,t} \setminus Y_\sing)$ is surjective for an element $n$ of the $\varepsilon$-interior $N^\varepsilon$ in order to finish the proof. In our picture, for $N=A$, the situation looks like this.
 \begin{center}
\begin{tikzpicture}


\fill[ domain=0:3, smooth, fill=gray!20,fill opacity=1] 
plot (3-\x,{-4+sqrt(-(3-\x)*(3-\x)+6*(3-\x)+16)}) 
--plot (\x,{4-sqrt(-\x*\x+6*\x+16)}) 
plot (-3+\x,{-4+sqrt(-(3-\x)*(3-\x)+6*(3-\x)+16)}) 
--plot (-\x,{4-sqrt(-\x*\x+6*\x+16)});

\fill[fill=lightgray,fill opacity=1] (0,0) circle (28.2027559pt);

\draw[line width=1pt, draw=black] (-0.93969,0.34202)--(0.93969,0.34202);
\draw[line width=1pt, draw=gray!20] (0,-1)--(0,1);

\draw[black, fill=black] (0,0) circle [radius=2pt];
\draw[line width=1pt, draw=black] (-3,0)--(3,0);
\draw[line width=1pt, draw=gray!80] (-3,0)--(-1,0) (1,0)--(3,0);
\node[label=below:$a$] (A) at (-0.2,0.1) {};
\node[label=above:$Y_A$] (A) at (0,0.85) {};

\node[label=below:$L_2$] (A) at (-0.9,-0.4) {};

\clip (-2,0.505) rectangle (2,-0.505);

\node[label=left:$Z_{a,t}$] (C) at (-0.9,0.34202) {};

\draw[line width=1.5pt, draw=black] (0,0) circle (1cm);
\end{tikzpicture}
\end{center}
Note that in general, the singular locus $Y_\sing$ can have nontrivial intersection with $\del Z_{N,t}$. This is the case for example for $N:=\{o\}$, the zero-dimensional stratum, where $\del Z_{o,t}$ has nontrivial intersection with the $2$-dimensional stratum $Y_A$ of $Y_\sing$.

In~\cite[Proof~of~Le.~3.1]{TianXu}, it was argued that $Z_{a,t}$ is homeomorphic to an affine complex analytic space with collared boundary. This is due to~\cite[Part~II,~Prop.~6.13.5]{GM}. Looking into the proof therein, we see that this statement is obtained by using Thom's first isotopy lemma to show that $Z_{a,t}$ is homeomorphic to the intersection of $Z_{A,t}$ with smooth submanifolds of $M$ transversal to $A$ and a small euclidean ball around $a$. But this is an even stronger statement. It means that by this homeomorphy, we can assume that $\{x_a\}:=Z_{a,t} \cap Y_A$ is a klt singularity in some $c$-dimensional variety $Z$, and $Z_{a,t}$ in turn is the intersection of $Z$ with a small ball around $x_a$. Note that $x_a$ does not have to be isolated, since the singular locus $(Z_{a,t})_\sing=Z_{a,t} \cap Y_\sing$ in general is bigger. 
Nevertheless, we know that $\del Z_{a,t} \cap Y_\sing$ is nothing but the \emph{regional link} (i.e. $\Link(x_a) \cap Z_\sm$) of $x_a$ and thus $\pi_1(\del Z_{a,t} \cap Y_\sing)=\pi_1^\reg(Z,x_a)=\pi_1(Z_{a,t} \cap Y_\sing)$. 

By repeating this procedure for every stratum $N$ of $E$, we arrive at the surjection $ \pi_1(V_{E_0}^0) \to \pi_1(U(\varepsilon)\setminus (E \cup Y_\sing))$ as wanted. By Lemma~3.2 of~\cite{TianXu}, we know that $\pi_1(V_{E_0}^0)$ is finite due to the induction hypothesis, so $\pi_1^\reg(X,x)$ is finite. By~\cite[Le.~3.5]{TianXu}, also the regional orbifold fundamental group  $\pi_1^\reg(X,\Delta,x)$ is finite and the proof is finished.
\end{proof}

\printbibliography

\end{document}